\providecommand{\U}[1]{\protect\rule{.1in}{.1in}}
\let\oldmathbf\mathbf
\renewcommand{\mathbf}[1]{\boldsymbol{\oldmathbf{#1}}}
\newtheorem{theorem}{Theorem}
\newtheorem{definition}[theorem]{Definition}
\newtheorem{lemma}[theorem]{Lemma}
\newtheorem{proposition}[theorem]{Proposition}
\begin{document}
\title{Equiconvergence for perturbed Jacobi polynomial expansions}
\author[G. Gigante]{Giacomo Gigante}
\address{Dipartimento di Ingegneria Gestionale, dell'Informazione e della Produzione,
Universit\`a degli Studi di Bergamo, Viale Marconi 5, Dalmine BG, Italy}
\email{giacomo.gigante@unibg.it}
\author[K. Jotsaroop]{Jotsaroop Kaur}
\address{Department of Mathematics,
	Indian Institute of Science Education and Research, Mohali India} 
\email{jotsaroop@iisermohali.ac.in}

\subjclass[2020]{42C05 34B24 34L10 }

\begin{abstract}
We show asymptotic expansions of the eigenfunctions of certain perturbations of the Jacobi operator in a bounded interval, deducing equiconvergence results between expansions with respect to the associated orthonormal basis and expansions with respect to the cosine basis. Several results for pointwise convergence then follow. 
        
\thanks{Giacomo Gigante has been supported by an Italian GNAMPA 2019 project.}

\end{abstract}
\maketitle

\section{Introduction}
The equiconvergence principle for expansions in eigenfunctions of Sturm-Liouville
operators goes back to the beginning of the 20th century, with the seminal paper of 
A. Haar \cite[page 355]{Haar}. It essentially says that the expansion of an integrable 
function in $[0,\pi/2]$ with respect to the eigenfunctions of the Sturm-Liouville operator in Liouville normal form
\[
-u''+q(t)u
\]
with homogeneous separated boundary conditions 
\[
u'(0)-hu(0)=u'(\pi/2)-Hu(\pi/2)=0
\] 
converges or diverges at some point if its expansion with respect to the cosine basis
$\{\cos(2nx)\}_{n=0}^{+\infty}$ converges or diverges at that point.  The function $q$ is required to be of bounded variation. J. L. Walsh \cite{Walsh} studied the case with boundary conditions $u(0)=u(\pi/2)=0$, realizing that the equiconvergence is uniform in the interval (although he was only able to show it for for square integral functions). 

Soon after Haar's work, J. Tamarkin \cite{Tamarkin} generalized these results to expansions in eigenfunctions of certain higher order differential problems now known as Birkhoff-regular problems
\cite{Birkhoff}, obtaining equiconvergence for integrable functions when the coefficients of the 
equation are continuous. M. H. Stone \cite{Stone}, studied the case of integrable coefficients.  

Several generalizations followed, some have been collected by A. M. Minkin in the survey \cite{Minkin}. Here we mention one of Minkin's own results \cite[Theorem 0.1, page 3673]{Minkin}. He studied symmetric quasi differential operators of higher order, defined on any open interval $I$, bounded or unbounded, with locally integrable coefficients, establishing an equiconvergence result for functions in $L^2(I)$, or integrable but compactly supported away from the endpoints of $I$.

Here we are interested in a singular Sturm-Liouville operator again of the form
\[
\ell u=-u''+q(t)u
\]
for particular choices of the function $q$, which will only be {\em locally integrable} in the interval $(0,\pi/2)$. Our 
goal is to obtain an equiconvergence 
result for all functions in $L^1((0,\pi/2))$. Unfortunately, to the best of our knowledge, none of the known 
results apply to our case, not even the above mentioned theorem of Minkin (as it does not apply to  functions
in $L^1$ of the {\em whole interval}). 
The reason why it is so important for us to show equiconvergence for such a large class of functions
will become clear by the end of this introduction. In fact, it would be interesting to obtain
results for even larger classes of functions. See also \cite{C} on these matters in a classic context.

The present paper is divided into three parts. In the first part, \S\ref{abstract}, we show that if an orthonormal basis $\{u_n\}_{n=0}^{+\infty}$ of
$L^2([0,\pi/2],dt)$, satisfies five axiomatic properties of resemblance
with the basis $\{\cos(2nt)\}_{n=0}^{+\infty}$, then there is equiconvergence between the Fourier
series associated respectively with the above two basis, with essentially any summability method.

More precisely, the summability method will come from a bounded double sequence $\{r_{n,N}\}$
such that $\sum_{n=0}^{+\infty}|r_{n,N}-r_{n+1,N}|$ is also bounded in $N$. For example, one could take
the Ces\`aro means of order $\theta\ge0$, given by
\[
r_{n,N}=\frac{A_{N-n}^\theta}{A_N^\theta}
\]
where $A_n^\theta = {{n+\theta}\choose n}$ for $n\ge0$, and $A_n^\theta =0$ for $n<0$. Thus, we shall call
\[
T_Nf(x):=\sum_{n=0}^{+\infty}r_{n,N}\int_0^{\pi/2}f(t)u_n(t)dt u_n(x)
\]
the means of the Fourier series of a function $f$, computed with our favourite summability method, with respect 
to the basis $\{u_n\}$, and
\[
D_Nf(x):=\frac 2\pi r_{0,N}\int_0^{\pi/2}f(t)dt+\frac 4\pi\sum_{n=1}^{+\infty}r_{n,N}\int_0^{\pi/2}f(t)\cos(2nt)dt \cos(2nx)
\]
the means of the Fourier series of $f$ with respect to the basis $\{\cos(2nt)\}$ with the same summability method.

In Theorem \ref{equiconv_L1} we show that if there is a dense subspace $\Omega$ of $L^1([0,\pi/2],dt)$ such that for all $t\in(0,\pi/2)$ and for all
$g\in\Omega$,
\[
\lim_{N\to+\infty} T_Ng(t)-D_Ng(t)=0,
\]
then for all $t\in(0,\pi/2)$ and for all
$f\in L^1([0,\pi/2],dt)$
\[
\lim_{N\to+\infty} T_Nf(t)-D_Nf(t)=0,
\]
and uniform convergence away from the endpoints in $\Omega$ implies the same type of convergence in $L^1$.
This result is very much in the spirit of J. E. Gilbert's work \cite{G1}. The five conditions on the basis $\{u_n\}$ and the hypothesis on $r_{n,N}$ are basically the same here and in \cite{G1}, but Gilbert proves a transplantation theorem, rather than an equiconvergence theorem. In a few words, he shows, among other results, that if the maximal operator $D^*f(x)=\sup_N|D_Nf(x)|$ is of weak type
$(p,p)$ with respect to $L^p((0,\pi/2),dt)$ for some $p$, then so is $T^*f(x)=\sup_N|T_Nf(x)|$. 

Observe that an equiconvergence result like the one we show here completely bypasses maximal functions, and can be used to transfer basically any type of pointwise convergence result  (convergence in a given point, divergence sets for continuous functions, almost everywhere convergence for $L^p$ functions, Hausdorff dimension of divergence sets of regular functions, etc.) that one has for, say, Ces\`aro means of classic Fourier series of functions in any space contained in $L^1([0,\pi/2],dt)$, to the same type of means of Fourier series associated with the basis 
$\{u_n\}$, for functions in the same space.

Since the boundedness of the maximal operator $T^*$ implies the almost everywhere convergence
of $T_Nf$, Gilbert's transplantation result can also be used to deduce almost everywhere results for $T_Nf$ from well known results on the boundedness of $D^*f$. 
Nevertheless, other types of pointwise convergence results do not seem to be deducible from Gilbert's transplantation theorem.

Despite this difference between the two theorems, Gilbert's and ours, the proof techniques are indeed very similar.

In the second part of the paper, \S\ref{L_normal}, we consider a perturbation of the Jacobi operator
\[
Ju=-u''+\left((\alpha^2-\frac14)\cot^2t+(\beta^2-\frac14)\tan^2t\right)u, \qquad \alpha\ge\beta>-1/2,
\]
that is an operator of the form
\[
\ell u=Ju-\chi(t) u,
\]
 where $\chi\in\mathcal C^2(\mathbb R)$ is even with respect to $0$ and $\pi/2$. The unbounded operator $\ell$ is defined on
 \[
 \mathcal D=\{f,f'\in AC_{\mathrm{loc}}, f,\ell f\in L^2([0,\pi/2],dt)\}
 \]
($AC_{\mathrm{loc}}$ is the class of absolutely continuous functions on all compact subintervals of $(0,\pi/2)$), and its
restriction $\tilde \ell$ to a properly chosen subdomain of $\mathcal D$ is self-adjoint and bounded below. By 
the spectral theorem there is a sequence of eigenfunctions of $\tilde\ell$, $\{u_n\}_{n=0}^{+\infty}$, that form an orthonormal basis 
of $L^2([0,\pi/2],dt)$.
The core of this second part is condensed in Theorem \ref{fits} and consists thus in showing that these eigenfunctions have certain asymptotic expansions and therefore satisfy the five axiomatic properties of the 
first part, so that the equiconvergence result holds in this context. As a consequence, virtually any pointwise convergence result for,
say, Ces\`aro means known for the cosine basis can be stated exactly in the same form for the basis $\{u_n\}$. The reader can find a short list 
at the end of \S\ref{L_normal}.

Once again, the proofs of some of the results here follow the lines of a paper by J. E. Gilbert \cite{G2}. Gilbert too shows that the eigenfunctions of certain operators satisfy his five axioms. Notice though that he requires that his operators satisfy some sort of symmetry with respect to the center of the interval $(0,\pi/2)$. In our case, this hypothesis would require $\alpha=\beta$. Thus, our result can be seen as an extension of Gilbert's result to more general operators.

In the third and final part of the paper, \S\ref{general}, we consider a perturbation $A(t)$ of the function  $\sin^{2\alpha+1}t\cos^{2\beta+1}t$ in the interval $(0,\pi/2)$, again with $\alpha\ge\beta>-1/2$. By this we mean that
\[
A(t)=B(t)\sin^{2\alpha+1}t\cos^{2\beta+1}t,
\]
where $B\in\mathcal C^4(\mathbb R)$ is strictly positive and even with respect to $0$ and $\pi/2$. The Sturm-Liouville
operator (now in general form)
\[
Lv:=\frac 1{A(t)}(A(t)v')'
\]
can be reduced, after the Liouville transformation $u(t)=A^{1/2}(t)v(t)$, to its Liouville normal form
\[
\ell u=Ju-\chi(t)u,
\]
where
\begin{align*}
\chi(t)&=(\beta+\frac12)\frac{B'(t)}{B(t)}\tan t-(\alpha+\frac12)\frac{B'(t)}{B(t)}\cot t+\frac 14\left(\frac{B'(t)}{B(t)}\right)^2-\frac 12\frac{B''(t)}{B(t)}\\
&+2\alpha\beta+2\alpha+2\beta+\frac 32
\end{align*}
is a $\mathcal C^2(\mathbb R)$ function even with respect to $0$ and $\pi/2$. Thus, the results of the second part of this paper can
be applied to this particular operator $\ell$, to deduce that the functions
\[
v_n(t):=A^{-1/2}(t)u_n(t), \qquad n=0,1,\ldots
\]
form an orthonormal basis of $L^2((0,\pi/2),A(t)dt)$ consisting of eigenfunctions of $L$. Various means of Fourier series
with respect to this type of bases have been studied widely in the past. Here we just want to mention the monograph \cite{CM}
on weak type estimates and almost everywhere convergence for Ces\`aro means of Jacobi polynomial expansions, which in our notation corresponds to the case $B(t)\equiv1$. The case of the unbounded interval $[0,+\infty)$ also has been studied, see \cite{BG, CCTV,CGV}.

The means operator in this context has the form
\begin{align*}
T_N^Af(x)&:=\sum_{n=0}^{+\infty}r_{n,N}\int_0^{\pi/2}f(t)v_n(t)A(t)dt v_n(x)\\
&=\sum_{n=0}^{+\infty}r_{n,N}\int_0^{\pi/2}f(t)A^{1/2}(t)u_n(t)dt A^{-1/2}(x)u_n(x)\\
&=A^{-1/2}(x) T_N(A^{1/2}f)(x).
\end{align*}
This implies that the equiconvergence  results of the first two parts of the paper, between $D_N$ and $T_N$,
can be transferred to $A^{-1/2}(x) D_N(A^{1/2}f)(x)$ and $T_N^Af(x)$, {\em as long as the function $A^{1/2}f$ belongs to
$L^1((0,\pi/2),dt)$}.

Concerning the almost everywhere convergence, looking at \cite[Theorem 1.4]{CM} and by analogy with what happens in the
case of the unbounded interval studied in \cite{CGV}, one would expect to obtain almost everywhere convergence for Ces\`aro means of order
$\theta>0$ for all functions in $L^p((0,\pi/2),A(t)dt)$ for 
\[
p\ge\max\left(1,\frac{4\alpha+4}{2\alpha+3+2\theta}\right).
\]
Unfortunately, observe that in order to have $A^{1/2}f\in L^1((0,\pi/2),dt)$ as needed to
apply equiconvergence, one needs $f\in L^p((0,\pi/2),A(t)dt)$ with
\[
p>\frac{4\alpha+4}{2\alpha+3}.
\]
This would leave the case 
\[
\max\left(1,\frac{4\alpha+4}{2\alpha+3+2\theta}\right)\le p\le\frac{4\alpha+4}{2\alpha+3}
\]
unexplored. For this reason, here we only study the case $\theta=0$, that is the {\em partial sums}
of the Fourier series with respect to $\{v_n\}$, obtaining
a few results on the pointwise convergence, and leave the case $\theta>0$ for future studies. 
More precisely, we show that when $\theta=0$ there is a.e. convergence in $L^p((0,\pi/2),A(t)dt)$ if and only if $p>(4\alpha+4)/(2\alpha+3)$
(Theorems \ref{Meaney} and \ref{a.e.}) and discuss the nature of the sets of divergence for
continuous and more regular functions (Theorems \ref{KK}, \ref{Hdim} and \ref{yes})


\section{An abstract equiconvergence theorem}\label{abstract}
Let $\{u_n\}_{n\geq 0}$ be an orthonormal basis of of $L^2([0,\pi/2])$.
We further assume that $\{u_n\}_{n\geq 0}$ satisfies the following properties, where we define $\Delta
u_{n}\left(  x\right)  =u_{n}\left(  x\right)  -u_{n+1}\left(  x\right)  $.

There exists a constant $C>0$ and a positive integer $n_0$ such that

\begin{description}
\item[(P1)] $\sup_{0<x<\pi/2}\left\vert u_{n}\left(
x\right)  \right\vert \leq C,$ $n=0,1,\ldots$

\item[(P2)] There
exists a function $Y_{0}\left(  x\right)  \in L^{\infty}\left(  0,\pi
/4\right]  $ and constants $\nu,$ $\lambda$ such that
\begin{align*}
u_{n}\left(  x\right)  =&\frac{2}{\sqrt{\pi}}\cos\left(  \left(  2n+\nu\right)
x-\lambda\right)  +Y_{0}\left(  x\right)  \frac{2}{nx}\sin\left(  \left(
2n+\nu\right)  x-\lambda\right)\\
&+O\left(  x^{-2}n^{-2}\right)
\end{align*}
uniformly in $\left[  1/n,\pi/4 \right]$,  $n\geq n_{0}$.

\item[(P3)] There exist functions $Z_{1},\ldots,Z_{4}\in L^{\infty}\left(
0,\pi/4\right]  $ such that
\begin{align*}
\Delta u_{n}\left(  x\right)  &=x\left(  Z_{1}\left(  x\right)  \cos\left(
2nx\right)  +Z_{2}\left(  x\right)  \sin\left(  2nx\right)  \right)  \\
&+\frac
{1}{n}\left(  Z_{3}\left(  x\right)  \cos\left(  2nx\right)  +Z_{4}\left(
x\right)  \sin\left(  2nx\right)  \right)  +O\left(  x^{-1}n^{-2}\right)
\end{align*}
uniformly in $\left[1/n,\pi/4\right]  $, $n\geq n_{0}.$


\item[(P4)] There exists $\tau>0$ such that%
\[
\left\vert \Delta u_{n}(x)\right\vert \leq C\left(  \left(  nx\right)
^{\tau}n^{-1}+n^{-2}\right)  .
\]
uniformly in $\left(  0,1/n\right]  ,$ $n\geq n_{0}$.

\item[(P5)] There is a sequence $\left\{  U_{n}\left(  x\right)  \right\}
_{n=0}^{+\infty}$ satisfying the above properties (P1), \ldots, (P4) such that
\[
u_{n}\left(  \pi/2-x\right)  =\left(  -1\right)  ^{n}U_{n}\left(  x\right)
+O\left(  n^{-2}\right)
\] 
uniformly in $x\in\left(  0,\pi/4
\right]$.  The constants $\nu,\,\lambda,\,\tau$ and the functions $Y_0$, $\,Z_1,\ldots,Z_4$
corresponding to the functions $U_n$ may be different from those corresponding to the functions $u_n$.
\end{description}

For any integer $N$, let $\left\{  r_{n,N}\right\}  _{n=0}^{\infty}$ be a
sequence such that, if we define $\Delta  r_{n,N}  =r_{n,N}%
-r_{n+1,N}$, there exists a constant $B$ such that

\begin{description}
\item[(S1)] $\left\vert r_{n,N}\right\vert \leq B$ for all $n,N\geq0$

\item[(S2)] $\sum_{n=0}^{+\infty}\left\vert \Delta  r_{n,N}
\right\vert \leq B$ for all $N\geq0.$
\end{description}

Define the associated multiplier operator and the corresponding kernel by%
\[
T_{N}f\left(  x\right)  =\sum_{n=0}^{+\infty}r_{n,N}\widehat{f}\left(  n\right)
u_{n}\left(  x\right)  ,\quad T_{N}\left(  x,y\right)  =\sum_{n=0}^{+\infty
}r_{n,N}u_{n}\left(  x\right)  u_{n}\left(  y\right)
\]
where%
\[
\widehat{f}\left(  n\right)  =\int_{0}^{\pi/2}f\left(  x\right)  u_{n}\left(
x\right)  dx.
\]
Notice that in particular when $r_{n,N}=1, n\leq N$ and $0$ otherwise, $T_N$ reduces to the partial sum operator.
We consider the cosine basis of $L^{2}\left(  \left[  0,\pi/2\right]  ,dt\right)  $,
\[
\left\{  \sqrt{\frac{4-2\delta_{0n}}{\pi}}\cos\left(  2nx\right)  \right\}
_{n=0}^{\infty},%
\]
where $\delta_{0n}=1$ when $n=0$ and $0$ otherwise.
We define the multiplier operator associated to the sequence $\{r_{n,N}\}_{n\geq 0,N\geq 0}$ with respect to the cosine basis as %
$$
D_{N}f\left(  x\right)    =\frac{2}{\pi}r_{0,N}\int_{0}^{\pi/2}f\left(
y\right)  dy+\frac{4}{\pi}\sum_{n=1}^{+\infty}r_{n,N}  \int_{0}^{\pi
/2}f\left(  y\right)  \cos\left(  2ny\right)  dy  \cos\left(
2nx\right).$$
The corresponding kernel is given by
$$D_{N}\left(  x,y\right) =\frac{2}{\pi}r_{0,N}+\frac{4}{\pi}\sum
_{n=1}^{+\infty}r_{n,N}\cos\left(  2nx\right)  \cos\left(  2ny\right)  .
$$
We prove that for a fixed $f\in L^1(\left[  0,\pi/2\right])$ and $x\in (0,\pi/2)$,  $T_Nf(x)$ converges to $f(x)$ if and only if $D_Nf(x)$ converges to $f(x)$. More precisely we prove the following theorem:
\begin{theorem}
\label{equiconv_L1} Assume that $\{u_n\}_{n=0}^{+\infty}$ satisfies (P1)-(P5), and $\{r_{n,N}\}$ satisfies (S1) and (S2). 
Assume also that for some dense subspace $\Omega\subseteq
L^{1}\left(  \left[  0,\pi/2\right]\right)  $, for
all $x\in\left(  0,\pi/2\right)  $ and for all $g\in\Omega$
\[
\lim_{N\rightarrow+\infty}T_{N}g\left(  x\right)  -D_{N}g\left(  x\right)
=0,
\]
then for all $x\in\left(  0,\pi/2\right)  $ and for all $f\in L^{1}\left(
\left[  0,\pi/2\right]  \right)  $
\[
\lim_{N\rightarrow+\infty}T_{N}f\left(  x\right)  -D_{N}f\left(  x\right)
=0.
\]
Furthermore, if the convergence of $\lim_{N\rightarrow+\infty}T_{N}g\left(
x\right)  -D_{N}g\left(  x\right)  $ is uniform on some set $\Gamma
\subset\left(  0,\pi/2\right)  $ with positive distance from $0$ and from
$\pi/2$ for each $g\in\Omega$, then for all $f\in L^{1}\left(  \left[
0,\pi/2\right]\right)  $
\[
\lim_{N\rightarrow+\infty}T_{N}f\left(  x\right)  -D_{N}f\left(  x\right)  =0
\]
uniformly on $\Gamma.$
\end{theorem}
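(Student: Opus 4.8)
The plan is to reduce everything to a single uniform estimate on the difference of the two kernels and then invoke a density argument. Write $c_n(y)=\sqrt{(4-2\delta_{0n})/\pi}\,\cos(2ny)$ for the normalized cosine basis and set
\[
R_N(x,y)=T_N(x,y)-D_N(x,y)=\sum_{n=0}^{+\infty}r_{n,N}\big(u_n(x)u_n(y)-c_n(x)c_n(y)\big).
\]
Although each of $T_N(x,\cdot)$ and $D_N(x,\cdot)$ carries a diagonal singularity at $y=x$ and is therefore \emph{not} in $L^\infty$, I expect these singularities to cancel, so that the difference satisfies
\[
\sup_{N}\ \sup_{y\in(0,\pi/2)}|R_N(x,y)|\le C_x<+\infty \qquad (x\in(0,\pi/2)\text{ fixed}),
\]
with $C_x$ depending only on $\mathrm{dist}(x,\{0,\pi/2\})$, hence uniformly bounded for $x\in\Gamma$. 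Granting this, the theorem follows by the standard $\varepsilon/3$ argument: using that $T_Nh(x)-D_Nh(x)=\int_0^{\pi/2}R_N(x,y)h(y)\,dy$ for $h\in L^1$ (the difference operator acting through the bounded kernel $R_N(x,\cdot)$), given $f\in L^1$ and $\varepsilon>0$ I pick $g\in\Omega$ with $\|f-g\|_1<\varepsilon$ and write $T_Nf(x)-D_Nf(x)=(T_Ng(x)-D_Ng(x))+\int_0^{\pi/2}R_N(x,y)(f-g)(y)\,dy$; the first term tends to $0$ by hypothesis while the second is bounded by $C_x\varepsilon$, and letting $\varepsilon\to0$ gives the pointwise conclusion. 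If the convergence on $\Omega$ is uniform on $\Gamma$ and $C_x\le C_\Gamma$ there, the same computation yields uniform convergence on $\Gamma$.

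The next step is to remove the summability weights by summation by parts, reducing the problem to the \emph{partial-sum} kernels. Setting $b_n(x,y)=u_n(x)u_n(y)-c_n(x)c_n(y)$ and $W_n(x,y)=\sum_{k=0}^n b_k(x,y)$, an Abel transform gives $R_N(x,y)=\sum_{n}\Delta r_{n,N}\,W_n(x,y)+(\text{boundary term})$, so by (S1)--(S2) it suffices to prove the single bound
\[
\sup_{n}|W_n(x,y)|\le C_x \qquad\text{uniformly in } y
\]
(the boundary term being controlled in the same way). In other words, the whole theorem reduces to equiconvergence of the \emph{partial sums}, after which (S1)--(S2) handle every admissible summability method at once.

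To bound $W_n$, fix $x$ with $\mathrm{dist}(x,\{0,\pi/2\})\ge\delta$. The finitely many terms $k<n_0$ are bounded by (P1). For $k\ge n_0$ one has $x\ge1/k$, so (P2) applies to $u_k(x)$; I split the range of $y$ into the bulk $[1/k,\pi/4]$, where (P2) also applies to $u_k(y)$, the symmetric region near $\pi/2$, which I fold onto the left endpoint through the reflection identity (P5) and the auxiliary basis $U_k$, and the endpoint layers $y<1/k$ and $\pi/2-y<1/k$, where (P2) fails and I resort to summation by parts using the increment bounds (P3)--(P4) together with the crude bound (P1). In the bulk I substitute the leading term of (P2) and expand the product by the product-to-sum formula, producing a \emph{difference-angle} contribution built from $\cos((2k+\nu)(x-y))$ and a \emph{sum-angle} contribution built from $\cos((2k+\nu)(x+y)-2\lambda)$, to be compared with the $\cos(2k(x-y))$ and $\cos(2k(x+y))$ pieces of $c_k(x)c_k(y)$. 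For the sum-angle pieces, $x+y$ stays bounded away from $0$ and $\pi$ (this is exactly where the interiority of $x$ and the reflection (P5) are used), so the relevant partial sums are controlled by the elementary Dirichlet estimate $\big|\sum_{k\le m}\cos(2k\theta)\big|,\big|\sum_{k\le m}\sin(2k\theta)\big|\lesssim 1/|\sin\theta|$. For the difference-angle pieces, which carry the diagonal singularity at $y=x$, I use $\cos((2k+\nu)\theta)-\cos(2k\theta)=-2\sin((2k+\nu/2)\theta)\sin(\nu\theta/2)$ with $\theta=x-y$: the conjugate Dirichlet sum $\sum_{k\le m}\sin((2k+\nu/2)\theta)$ is $O(1/|\theta|)$, while the prefactor $\sin(\nu\theta/2)=O(|\theta|)$ exactly cancels this growth, leaving a bounded remainder. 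The lower-order $Y_0$-term of (P2) and the $O(x^{-2}k^{-2})$ errors are summable once $x\ge\delta$, and in the endpoint layers the increment bounds (P3)--(P4) feed a further summation by parts against the (bounded) partial sums of $u_k(x)$.

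The main obstacle is precisely this cancellation of the diagonal singularity at $y=x$: one must show that the most singular parts of the two partial-sum kernels coincide, so that their difference $W_n$ stays bounded uniformly in $n$, while simultaneously keeping control of the sum-angle (reflection) contributions and of the two endpoint layers $y\to0,\pi/2$. The bookkeeping across the $k$-dependent regions $y\lessgtr1/k$, the passage to the auxiliary basis $U_k$ via (P5) near $\pi/2$, and the use of the increment estimates (P3)--(P4) to run summation by parts where the asymptotics (P2) degenerate, are the technically delicate points; once they are in place, all constants depend only on $\delta=\mathrm{dist}(\Gamma,\{0,\pi/2\})$, so the pointwise bound upgrades for free to the uniform statement on $\Gamma$.
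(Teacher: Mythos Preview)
Your proposal is correct and follows essentially the same route as the paper: the paper isolates exactly the kernel bound $\sup_{N,y}|T_N(x,y)-D_N(x,y)|\le K(x)\le C(x^{-1}+(\pi/2-x)^{-1})$ as a separate theorem, proves it by the same Abel summation to reduce to partial sums and the same case analysis (bulk via (P2) with the $(\cos(\nu(x-y))-1)$ cancellation of the diagonal, small $y$ via summation by parts with (P3)--(P4), and $y$ near $\pi/2$ via (P5)), and then deduces the theorem by the density argument you describe. The only organizational difference is that the paper first splits at $y=x/2$ and then, within $0<y<x/2$, splits the \emph{sum} at $k\sim 1/y$, rather than phrasing the regions as $k$-dependent, but the content is the same.
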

\subsection{Preliminary results}
In this subsection we will prove some preliminary results needed to prove Theorem \ref{equiconv_L1}.
We start with a technical lemma. It will be convenient to define some
functions which would be needed in our analysis. For any given integers $M\leq N\ $we set%
\begin{align*}
C_{M,N}^{0}\left(  t\right)   &  =\sum_{n=M}^{N}\cos\left(  2nt\right)
,\qquad S_{M,N}^{0}\left(  t\right)  =\sum_{n=M}^{N}\sin\left(  2nt\right) \\
C_{M,N}^{1}\left(  t\right)   &  =\sum_{n=M}^{N}\frac{1}{n}\cos\left(
2nt\right)  ,\qquad S_{M,N}^{1}\left(  t\right)  =\sum_{n=M}^{N}\frac{1}%
{n}\sin\left(  2nt\right)  .
\end{align*}

\begin{lemma}\label{sineexpansion}
For all integers $0\leq M\leq N$ and for all $t\in\mathbb{R}$%
\begin{align*}
C_{M,N}^{0}\left(  t\right)   &  =\frac{\cos\left(  \left(  N+M\right)
t\right)  \sin\left(  \left(  N-M+1\right)  t\right)  }{\sin t},\\
S_{M,N}^{0}\left(  t\right)   &  =\frac{\sin\left(  \left(  N+M\right)
t\right)  \sin\left(  \left(  N-M+1\right)  t\right)  }{\sin t}.
\end{align*}
In particular, all the above functions are bounded by $\min\left(  \left\vert
\sin t\right\vert ^{-1},N-M+1\right)$.
\end{lemma}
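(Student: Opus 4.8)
The plan is to handle both sums simultaneously by passing to complex exponentials. First I would form the combination $C_{M,N}^{0}(t)+iS_{M,N}^{0}(t)=\sum_{n=M}^{N}e^{2int}$, which is a finite geometric series. Working under the temporary assumption $t\notin\pi\mathbb{Z}$ (so that $e^{2it}\neq1$), I would sum it to obtain
\[
\sum_{n=M}^{N}e^{2int}=\frac{e^{2i(N+1)t}-e^{2iMt}}{e^{2it}-1}=\frac{e^{2iMt}\bigl(e^{2i(N-M+1)t}-1\bigr)}{e^{2it}-1}.
\]

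Next I would symmetrize numerator and denominator using $e^{i\theta}-e^{-i\theta}=2i\sin\theta$: writing $e^{2i(N-M+1)t}-1=e^{i(N-M+1)t}\cdot 2i\sin((N-M+1)t)$ and $e^{2it}-1=e^{it}\cdot 2i\sin t$, the factors of $2i$ cancel and the exponentials collapse to a single phase. A short exponent bookkeeping, $2M+(N-M+1)-1=M+N$, then yields
\[
\sum_{n=M}^{N}e^{2int}=\frac{e^{i(M+N)t}\,\sin((N-M+1)t)}{\sin t}.
\]
Taking real and imaginary parts of both sides gives at once the two claimed closed forms for $C_{M,N}^{0}$ and $S_{M,N}^{0}$.

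I would then dispose of the degenerate case $t\in\pi\mathbb{Z}$ directly: there $\cos(2nt)=1$ and $\sin(2nt)=0$ for every $n$, so $C_{M,N}^{0}(t)=N-M+1$ and $S_{M,N}^{0}(t)=0$. These agree with the limiting values of the right-hand sides and, more importantly, are covered by the $N-M+1$ branch of the asserted bound, so the statement holds literally for all real $t$.

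Finally, for the uniform bound I would argue in two complementary ways. The trivial estimate, summing $N-M+1$ terms each of modulus at most $1$, gives $|C_{M,N}^{0}(t)|,\,|S_{M,N}^{0}(t)|\le N-M+1$. On the other hand, in the closed forms every trigonometric factor in the numerator has modulus at most $1$, so both quantities are bounded by $|\sin t|^{-1}$. Combining the two estimates produces the bound by $\min(|\sin t|^{-1},N-M+1)$. I do not anticipate any genuine obstacle: this is a standard Dirichlet-kernel computation, and the only point requiring a sliver of care is the separate treatment of $t\in\pi\mathbb{Z}$, where the quotient formula must be read as its removable-singularity value.
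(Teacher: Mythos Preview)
Your argument is correct and is precisely the standard computation the paper alludes to when it writes ``We skip the proof of the above lemma as it simply follows by using the well known formulae for sine and cosine expansions.'' The paper gives no details beyond that sentence, so your complex-exponential geometric-series derivation, together with the separate handling of $t\in\pi\mathbb{Z}$ and the two-sided bound, is exactly what is intended.
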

We skip the proof of the above lemma as it simply follows by using the well known formulae for sine and cosine expansions.
\begin{lemma}\label{cosinesuminequalities}
There is a positive constant $C'$ such that for all integers $1\leq M\leq N$
and for all $t\in\mathbb{R}$%
\begin{align*}
\left\vert S_{M,N}^{1}\left(  t\right)  \right\vert  &  \leq C',\\
\left\vert C_{M,N}^{1}\left(  t\right)  \right\vert  &  \leq3+\log\left(
1+\frac{\min\left(  \left\vert \sin t\right\vert ^{-1},N-M\right)  }%
{M}\right)  .
\end{align*}
In particular, $C^1_{M,N}(t)$ is uniformly bounded for $|\sin t|\ge cM^{-1}$.
\end{lemma}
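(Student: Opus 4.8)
The plan is to prove both estimates by Abel summation (summation by parts), using Lemma \ref{sineexpansion} to control the unweighted partial sums. By the $\pi$-periodicity of $\cos(2nt)$ and $\sin(2nt)$ in $t$ and the parity of the two sums, I may assume throughout that $t\in(0,\pi/2]$, so that $\sin t>0$ and, by Jordan's inequality, $t/\sin t\le\pi/2$. Write $L=(\sin t)^{-1}$ and fix a threshold $n_*=\lceil L\rceil$. The whole argument consists in treating the \emph{low} range $n\le n_*$ (where $2nt\lesssim1$) and the \emph{high} range $n>n_*$ (where the oscillation encoded in Lemma \ref{sineexpansion} can be exploited) separately.

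For $S_{M,N}^{1}$, on the low range $M\le n\le\min(N,n_*)$ I would use the elementary bound $|\sin(2nt)|\le 2nt$, so that $n^{-1}|\sin(2nt)|\le 2t$ and the sum over at most $n_*$ terms is at most $2t\,n_*\le 2t(L+1)\le\pi+2t$, a constant. On the high range $n_*<n\le N$ (intersected with $n\ge M$) I would apply Abel summation to $\sum n^{-1}\sin(2nt)$, using that the partial sums $S^{0}_{n_*+1,n}$ are bounded by $L$ via Lemma \ref{sineexpansion}; the boundary term is then $N^{-1}L\le n_*^{-1}L\le1$ and the telescoping term is bounded by $L\sum_{n>n_*}\frac{1}{n(n+1)}\le L/(n_*+1)\le1$. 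Adding the two ranges gives $|S^{1}_{M,N}(t)|\le C'$.

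For $C_{M,N}^{1}$ the only difference is that on the low range the factor $\cos(2nt)$ no longer helps, so I would use the trivial estimate $\sum_{n=M}^{\min(N,n_*)}n^{-1}\le 1+\log(\min(N,n_*)/M)$. Since $\min(N,n_*)-M\le\min(N-M,L+1)$, this produces exactly the logarithmic term $\log\bigl(1+\min(L,N-M)/M\bigr)$ up to an additive constant. The high range $n>n_*$ is again handled by Abel summation, now with $C^{0}_{n_*+1,n}$ bounded by $L$, and contributes only an $O(1)$ term. Collecting the pieces and absorbing all absolute constants into the leading $3$ yields the claimed bound; note that when $N-M\le L$ one never enters the high range and the trivial bound alone suffices, which explains the minimum appearing in the statement.

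The step I expect to be the main obstacle is the uniform boundedness of $S^{1}_{M,N}$. A direct Abel summation using only the crude bound $\min(L,\,n-M+1)$ from Lemma \ref{sineexpansion} produces a spurious $\log(N/M)$ factor and fails to give uniform boundedness; the cancellation is recovered only by splitting at $n_*\sim L$ and using the linear bound $|\sin(2nt)|\le 2nt$ together with $t/\sin t\le\pi/2$ on the low range. Some care is also needed with the degenerate cases $M>n_*$ (low range empty, but then $L<M$ makes the high-range estimates only better) and $N\le n_*$ (high range empty), though both merely simplify the computation.
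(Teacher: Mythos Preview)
Your proof is correct. For $C^1_{M,N}$ both you and the paper use Abel summation, though organized differently: the paper applies summation by parts to the full sum and then splits the resulting sum over $j$ according to which branch of $\min\bigl(|\sin t|^{-1},\,j-M+1\bigr)$ from Lemma~\ref{sineexpansion} is active, whereas you split the original sum at $n_*\sim|\sin t|^{-1}$ and apply Abel only on the tail; the two routes are essentially equivalent and yield the same constant. For $S^1_{M,N}$ the approaches genuinely differ. The paper first reduces to $S^1_{1,N}$ via $S^1_{M,N}=S^1_{1,N}-S^1_{1,M-1}$, then writes $S^1_{1,N}(t)=\int_0^t\frac{\sin((2N+1)u)}{\sin u}\,du-t$ (the partial sum of the sawtooth series) and bounds this Dirichlet-type integral by integration by parts. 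Your argument---splitting at $n_*$ and using $|\sin(2nt)|\le 2nt$ together with $t/\sin t\le\pi/2$ on the low range, Abel summation on the high range---is more elementary, avoids the integral representation, and has the advantage of treating $S^1$ and $C^1$ by a single uniform mechanism. The paper's route for $S^1$ is slightly more classical and connects the estimate to Fourier theory, but yours is arguably cleaner for the purpose at hand.
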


\begin{proof}
Let us begin with the inequality $\left\vert S_{1,N}^{1}\left(  t\right)
\right\vert \leq C$. The inequality for $S_{M,N}^{1}$ with $M>1$ follows immediately by writing $S_{M,N}^{1}= S_{1,N}^{1}- S_{1,M-1}^{1}$.
Notice that $S^1_{1,N}(t)$ is the $N$th partial sum of the Fourier series of the sawtooth function $\pi/2-t$, $t\in[0,\pi]$.  Assuming without loss of generality $0<t<\pi/2$,%
\begin{align*}
\sum_{n=1}^{N}\frac{1}{n}\sin\left(  2nt\right)   &  =\int_{0}^{t}2\sum
_{n=1}^{N}\cos\left(  2nu\right)  du\\
&  =\int_{0}^{t}\left(  \sum_{n=-N}^{N}\cos\left(  2nu\right)  -1\right)  du\\
&  =\int_{0}^{t}\frac{\sin\left(  \left(  2N+1\right)  u\right)  }{\sin
u}du-\int_{0}^{t}du.
\end{align*}
If $t\leq1/\left(  2N+1\right)  $, the inequality%
\[
\left\vert \frac{\sin\left(  \left(  2N+1\right)  u\right)  }{\sin
u}\right\vert \leq2N+1
\]
uniformly in $\left(  0,t\right)  $ gives the result. If $1/\left(
2N+1\right)  <t<\pi/2$, then we split the above integral as
\[
\int_{1/\left(  2N+1\right)  }^{t}\frac{\sin\left(  \left(  2N+1\right)
u\right)  }{\sin u}du+\int_{0}^{1/\left(  2N+1\right)  }\frac{\sin\left(
\left(  2N+1\right)  u\right)  }{\sin u}ds-t.
\]
The last two terms are bounded. As for the first one, setting $\psi\left(
u\right)  =1/\sin\left(  u\right)  $, a decreasing function in the interval
$\left[  1/\left(  2N+1\right)  ,t\right]  $, integration by parts gives%
\begin{align*}
&  \left\vert \int_{1/\left(  2N+1\right)  }^{t}\frac{\sin\left(  \left(
2N+1\right)  u\right)  }{\sin u}du\right\vert \\
&  \leq\frac{1}{2N+1}\left(  \left\vert \left(  \cos\left(  \left(
2N+1\right)  t\right)  \right)  \psi\left(  t\right)  \right\vert +\left\vert
\cos\left(  1\right)  \psi\left(  1/\left(  2N+1\right)  \right)  \right\vert\right)\\
&-\frac 1{2N+1}\int_{1/\left(  2N+1\right)  }^{t}\psi^{\prime}\left(  s\right)  ds \\
&  \leq\frac{1}{2N+1}\left(  \frac{3}{\sin\left(  1/\left(  2N+1\right)
\right)  }\right)  \leq C.
\end{align*}
Let us prove the inequality for $C^1_{M,N}$ now. It clearly suffices to assume $M<N$. It follows immediately after summation by parts,%
\begin{align*}
\left\vert C_{M,N}^{1}\left(  t\right)  \right\vert  &  =\left\vert \frac
{1}{N}\sum_{n=M}^{N}\cos\left(  2nt\right)  +\sum_{j=M}^{N-1}\left(  \frac
{1}{j}-\frac{1}{j+1}\right)  \sum_{n=M}^{j}\cos\left(  2nt\right)  \right\vert.
\\
\end{align*}
The sum in $j$ involves $C^0_{M,j}(t)$. After using the estimates for $C^0_{M,j}(t),\,M\leq j\leq N-1$ from Lemma \ref{sineexpansion} we get 
\[
  \left\vert C_{M,N}^{1}\left(  t\right)  \right\vert  
  \leq\frac{N-M+1}{N}  +\sum_{j=M}^{N-1}\left(  \frac{1}{j}-\frac{1}{j+1}\right)
\min\left(  \frac{1}{\left\vert \sin t\right\vert },j-M+1\right). \\
\]
Clearly $(N-M+1)/N$ is bounded by $1$ for all $1\le M< N$. Calling $[\cdot]$ the floor function, and $\chi_D$ the indicator function of the set $D$, the inequality above further simplifies to 
\begin{align*}
\left\vert C_{M,N}^{1}\left(  t\right)  \right\vert &  \leq1+\sum_{j=M}^{\min\left(  \left[  \left\vert \sin t\right\vert
^{-1}+M-1\right]  ,N-1\right)  }\left(  \frac{1}{j}-\frac{1}{j+1}\right)
\left(  j-M+1\right) \\
&  +\chi_{\left[  0,N-1\right]  }\left(  \left[  \left\vert \sin t\right\vert
^{-1}+M\right]  \right)  \sum_{\left[  \left\vert \sin t\right\vert
^{-1}+M\right]  }^{N-1}\left(  \frac{1}{j}-\frac{1}{j+1}\right)  \frac
{1}{\left\vert \sin t\right\vert }\\
&  \leq1+\left(\sum_{j=M}^{\min\left(  \left[  \left\vert \sin t\right\vert
^{-1}+M-1\right]  ,N-1\right)  }\frac{1}{j+1}\right)+\frac{1}{\left\vert \sin
t\right\vert }\frac{1}{ \left[  \left\vert \sin t\right\vert
^{-1}+M\right]   }\\
&  \leq3+\log\left(  1+\frac{\min\left(  \left\vert \sin t\right\vert
^{-1},N-M\right)  }{M}\right)
\end{align*}
\end{proof}

We will now need a few more inequalities involving sine and cosine functions. These inequalities are standard. We are mentioning them just for the sake of completeness.
\begin{definition}
For any given integers $M\leq N\ $we set%
\begin{align*}
C_{M,N}^{0,-}\left(  t\right)   &  =\sum_{n=M}^{N}\left(  -1\right)  ^{n}%
\cos\left(  2nt\right)  ,\qquad S_{M,N}^{0,-}\left(  t\right)  =\sum_{n=M}%
^{N}\left(  -1\right)  ^{n}\sin\left(  2nt\right) \\
C_{M,N}^{1,-}\left(  t\right)   &  =\sum_{n=M}^{N}\frac{\left(  -1\right)
^{n}}{n}\cos\left(  2nt\right)  ,\qquad S_{M,N}^{1,-}\left(  t\right)
=\sum_{n=M}^{N}\frac{\left(  -1\right)  ^{n}}{n}\sin\left(  2nt\right)  .
\end{align*}

\end{definition}

\begin{lemma}\label{zero_minus}
There is a constant $C$ such that for all integers $0\leq M\leq N$ and for all
$t\in\mathbb{R}$,%
\[
\left\vert C_{M,N}^{0,-}\left(  t\right)  \right\vert \leq\frac{1}{\left\vert
\cos t\right\vert },\quad\left\vert S_{M,N}^{0,-}\left(  t\right)  \right\vert
\leq\frac{1}{\left\vert \cos t\right\vert }.
\]

\end{lemma}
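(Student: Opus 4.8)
The plan is to reduce both estimates to Lemma \ref{sineexpansion} by exploiting the identity $(-1)^n=\cos(n\pi)=e^{in\pi}$. Concretely, I would observe that for every $n$,
\[
(-1)^n e^{2int}=e^{in\pi}e^{2int}=e^{2in(t+\pi/2)},
\]
so that summing over $M\le n\le N$ and separating real and imaginary parts yields the exact identities
\[
C_{M,N}^{0,-}(t)=C_{M,N}^{0}\!\left(t+\tfrac{\pi}{2}\right),\qquad S_{M,N}^{0,-}(t)=S_{M,N}^{0}\!\left(t+\tfrac{\pi}{2}\right).
\]

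Next I would invoke Lemma \ref{sineexpansion}, according to which both $C_{M,N}^{0}(s)$ and $S_{M,N}^{0}(s)$ are bounded in absolute value by $|\sin s|^{-1}$ for every $s\in\mathbb{R}$ and all $0\le M\le N$, which is exactly the range we need here. Substituting $s=t+\pi/2$ and using $\sin(t+\pi/2)=\cos t$ gives
\[
\left\vert C_{M,N}^{0,-}(t)\right\vert \le\frac{1}{\left\vert\sin(t+\pi/2)\right\vert}=\frac{1}{\left\vert\cos t\right\vert},
\]
and the identical argument applied to $S_{M,N}^{0}$ produces the bound for $S_{M,N}^{0,-}$. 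One may alternatively verify the conclusion directly from the closed forms in Lemma \ref{sineexpansion}: evaluating at $t+\pi/2$ turns the denominator $\sin s$ into $\cos t$ while leaving in the numerator a product of a cosine and a sine, each of modulus at most $1$.

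Because the argument is a one-line shift of variable followed by an appeal to the already-established Lemma \ref{sineexpansion}, there is essentially no obstacle to overcome. The only point that warrants a moment's care is checking that the shift $s\mapsto t+\pi/2$ is admissible for all real $t$ and that the boundary case $M=0$, where the $n=0$ term simply contributes $1$, is covered; both are immediate, since Lemma \ref{sineexpansion} is stated for arbitrary $t\in\mathbb{R}$ and all $0\le M\le N$.
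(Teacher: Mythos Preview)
Your proof is correct and follows essentially the same approach as the paper: both reduce to Lemma \ref{sineexpansion} by absorbing the factor $(-1)^n$ into a shift of the argument by $\pi/2$. Your version is in fact slightly cleaner: the paper writes $C_{M,N}^{0,-}(t)=\tfrac12 C_{M,N}^{0}(t+\pi/2)+\tfrac12 C_{M,N}^{0}(\pi/2-t)$ via the product-to-sum formula, but since $C_{M,N}^{0}$ is even these two summands coincide, and your single identity $C_{M,N}^{0,-}(t)=C_{M,N}^{0}(t+\pi/2)$ captures this directly.
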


\begin{proof}
Observe that%
\begin{align*}
C_{M,N}^{0,-}\left(  t\right)   &  =\frac{1}{2}\sum_{n=M}^{N}\cos\left(
2n\left(  t+\frac{\pi}{2}\right)  \right)  +\frac{1}{2}\sum_{n=M}^{N}%
\cos\left(  2n\left(  \frac{\pi}{2}-t\right)  \right) \\
&  =\frac{1}{2}C_{M,N}^{0}\left(  t+\frac{\pi}{2}\right)  +\frac{1}{2}%
C_{M,N}^{0}\left(  \frac{\pi}{2}-t\right)  ,
\end{align*}
and therefore%
\[
\left\vert C_{M,N}^{0,-}\left(  t\right)  \right\vert \leq\frac{1}{2\left\vert
\sin\left(  t+\frac{\pi}{2}\right)  \right\vert }+\frac{1}{2\left\vert
\sin\left(  \frac{\pi}{2}-t\right)  \right\vert }\leq\frac{1}{\left\vert
\cos\left(  t\right)  \right\vert }.
\]
The estimate for $S_{M,N}^{0,-}\left(  t\right)  $ is similar.
\end{proof}

\begin{lemma}\label{one_minus}
There is a positive constant $C$ such that for all integers $1\leq M\leq N$
and for all $t\in\mathbb{R}$%
\begin{align*}
\left\vert S_{M,N}^{1,-}\left(  t\right)  \right\vert  &  \leq C,\\
\left\vert C_{M,N}^{1,-}\left(  t\right)  \right\vert  &  \leq3+\log\left(
1+\frac{\min\left(  \left\vert \cos t\right\vert ^{-1},N-M\right)  }%
{M}\right)  .
\end{align*}
\end{lemma}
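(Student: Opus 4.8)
The plan is to reduce both inequalities directly to Lemma \ref{cosinesuminequalities} by absorbing the alternating factor $(-1)^n$ into a phase shift of the argument. The key observation is the elementary identity $(-1)^n=\cos(n\pi)$, which together with the angle-addition formulae gives, term by term, $(-1)^n\sin(2nt)=\sin(2nt+n\pi)=\sin\!\big(2n(t+\tfrac\pi2)\big)$ and likewise $(-1)^n\cos(2nt)=\cos\!\big(2n(t+\tfrac\pi2)\big)$. This is exactly the device already exploited in the proof of Lemma \ref{zero_minus}, only now applied to the weighted series carrying the extra factors $1/n$, where it is even cleaner because a single shift (rather than the symmetric pair $t\mapsto t+\pi/2$, $t\mapsto\pi/2-t$) suffices.

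First I would use these identities to rewrite the defining sums as $S_{M,N}^{1,-}(t)=S_{M,N}^{1}(t+\tfrac\pi2)$ and $C_{M,N}^{1,-}(t)=C_{M,N}^{1}(t+\tfrac\pi2)$, where $S^1_{M,N}$ and $C^1_{M,N}$ are precisely the non-alternating quantities estimated in Lemma \ref{cosinesuminequalities}. Since the hypothesis $1\le M\le N$ is the same in both lemmas and the identities hold for all real $t$, I can invoke Lemma \ref{cosinesuminequalities} verbatim at the shifted argument $t+\tfrac\pi2$, with no case distinction on the range of $t$.

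Finally, I would simplify the resulting bounds using $\sin(t+\tfrac\pi2)=\cos t$. This immediately yields $|S_{M,N}^{1,-}(t)|=|S_{M,N}^{1}(t+\tfrac\pi2)|\le C'$, and it turns the factor $|\sin(t+\tfrac\pi2)|^{-1}$ inside the logarithmic estimate for $C^1$ into $|\cos t|^{-1}$, producing exactly the claimed bound $3+\log\!\big(1+\min(|\cos t|^{-1},N-M)/M\big)$. There is essentially no obstacle: the whole content of the lemma is that the alternating sums are the objects of Lemma \ref{cosinesuminequalities} evaluated at a point reflected about $\pi/4$, so that the singularity, originally located where $\sin t=0$, is transported to where $\cos t=0$. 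The only points worth double-checking are the sign bookkeeping in the two trigonometric identities and the fact that the reflection sends $|\sin(\cdot)|^{-1}$ to $|\cos(\cdot)|^{-1}$, which I have verified above.
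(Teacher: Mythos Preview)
Your proof is correct and follows essentially the same route as the paper. The paper writes the identities as symmetric averages, e.g.\ $C_{M,N}^{1,-}(t)=\tfrac12 C_{M,N}^{1}(t+\tfrac\pi2)+\tfrac12 C_{M,N}^{1}(t-\tfrac\pi2)$, but since each summand satisfies $\cos(2n(t+\tfrac\pi2))=\cos(2n(t-\tfrac\pi2))$ (the arguments differ by $2n\pi$), the two halves coincide and the paper's decomposition reduces to your single shift; the remainder of the argument---invoking Lemma~\ref{cosinesuminequalities} at the shifted point and using $\sin(t+\tfrac\pi2)=\cos t$---is identical.
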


\begin{proof}
These inequalities follow easily from the identities
\begin{align*}
\sum_{n=M}^{N}\frac{1}{n}\left(  -1\right)  ^{n}\sin\left(  2nt\right)   &
=\frac{1}{2}S_{M,N}^{1}\left(  t+\frac{\pi}{2}\right)  +\frac{1}{2}S_{M,N}%
^{1}\left(  t-\frac{\pi}{2}\right) \\
\sum_{n=M}^{N}\frac{1}{n}\left(  -1\right)  ^{n}\cos\left(  2nt\right)   &
=\frac{1}{2}C_{M,N}^{1}\left(  t+\frac{\pi}{2}\right)  +\frac{1}{2}C_{M,N}%
^{1}\left(  t-\frac{\pi}{2}\right)  .
\end{align*}
\end{proof}

\subsection{Pointwise estimates on the kernels}

Recall that $T_N(x,y)$ and $D_N(x,y)$ are the kernels corresponding to the operators $T_N$ and $D_N$ defined in the beginning of this section.  
In order to prove Theorem~\ref{equiconv_L1}, we need some uniform estimates for $x$ in compact subsets of $(0,\pi/2)$ and $y\in(0,\pi/2)$ on the difference of the kernels $T_N(x,y)$ and $D_N(x,y)$. We have the following theorem:
\begin{theorem}
\label{kernel}For any $0<x<\pi/2$ there is a positive constant $K\left(
x\right)  $ such that for any integer $N$ and for all $y\in(0,\pi/2)$
\[
\left\vert T_{N}\left(  x,y\right)  -D_{N}\left(  x,y\right)  \right\vert \leq
K\left(  x\right).
\]
Furthermore, $K\left(  x\right)  \leq C\left(  x^{-1}+\left(  \pi/2-x\right)
^{-1}\right)$.
\end{theorem}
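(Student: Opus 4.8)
The plan is to bound the difference kernel $\sum_{n\ge0}r_{n,N}\big[u_n(x)u_n(y)-\tfrac4\pi\cos(2nx)\cos(2ny)\big]$ (the stray term $-\tfrac2\pi r_{0,N}$ is harmless by (S1)) uniformly in $y$ and $N$, by substituting the asymptotic expansions (P2)--(P5) and reducing everything to the canonical trigonometric sums of Lemmas~\ref{sineexpansion}--\ref{one_minus}, each summed against $r_{n,N}$ by summation by parts using (S1) and (S2). I fix $x$ and, by the reflection (P5) together with $\cos(2n(\pi/2-t))=(-1)^n\cos(2nt)$, I may assume $x\in(0,\pi/4]$ (the case $x\in[\pi/4,\pi/2)$ produces the $(\pi/2-x)^{-1}$ half of the bound); the $O(n^{-2})$ errors in (P5), being summable, contribute only $O(1)$. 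I then split the range of $n$: for $n<\max(n_0,\lceil1/x\rceil)$ I bound each term crudely by (P1), and since there are $O(x^{-1})$ of them this block is $O(x^{-1})$.

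For the remaining $n$ I split according to the position of $y$. When $y\le1/n$ (where (P2)/(P3) for $u_n(y)$ are unavailable) I first sum by parts in $n$: the partial sums $\sum_{k\le n}r_{k,N}u_k(x)$ and $\sum_{k\le n}r_{k,N}\cos(2kx)$ are $O(x^{-1})$ by Lemma~\ref{sineexpansion} and (S1)--(S2), while the surviving factors $\Delta u_n(y)$ and $\Delta\cos(2ny)$ are controlled by (P4), and the resulting series $\sum_{n\le 1/y}\big((ny)^\tau n^{-1}+n^{-2}\big)$ is $O(1)$ uniformly in $y$ precisely because $\tau>0$. When $y\ge1/n$ I insert the (P2) expansions of both $u_n(x)$ and $u_n(y)$ and multiply out: the error in $u_n(x)$ is $O(x^{-2}n^{-2})$ and sums over $n\ge1/x$ to $O(x^{-1})$, while the leading parts factor through the product-to-sum identities into sums of $\cos\big((2n+\nu)(x\pm y)+\mathrm{const}\big)$ and $\tfrac1n(\cdot)$, to which Lemmas~\ref{sineexpansion}--\ref{one_minus} apply after one summation by parts. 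For $y$ beyond $\pi/4$ I reflect $y\mapsto\pi/2-y$ via (P5); the factors $(-1)^n$ turn these into the ``minus'' sums of Lemmas~\ref{zero_minus}--\ref{one_minus}, whose $|\cos(\cdot)|^{-1}$ bounds stay finite since $x$ and $\pi/2-y$ both lie in $(0,\pi/4]$ and no diagonal can occur.

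The decisive estimate is the leading-times-leading contribution on the bulk, $\tfrac2\pi\sum_n r_{n,N}\big[\cos((2n+\nu)(x-y))-\cos(2n(x-y))\big]$ together with its $x+y$ analogue. For $x+y$ the argument is bounded away from $0$, so Lemma~\ref{sineexpansion} and (S1)--(S2) give $O(|\sin(x+y)|^{-1})=O(x^{-1})$ at once. For $x-y$ the point is a cancellation: writing the bracket as $(\cos(\nu(x-y))-1)\cos(2n(x-y))-\sin(\nu(x-y))\sin(2n(x-y))$ produces a prefactor $O(|x-y|)$ that exactly tames the $|\sin(x-y)|^{-1}$ Dirichlet singularity, leaving an $O(1)$ bound valid uniformly across the diagonal $y=x$.

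The main obstacle is precisely this uniformity in $y$, and its most delicate instance is not the leading term but the secondary and remainder terms of the $y$-expansion. Here I would juggle two estimates for the same sum: the uniform bound $|S^1_{M,N}|\le C'$ of Lemma~\ref{cosinesuminequalities} (good when $y$ is not too small) against the decaying tail bound $O(M^{-1}|\sin(x-y)|^{-1})$ from Lemma~\ref{sineexpansion} with $M\simeq1/y$ (good away from the diagonal), checking that in either regime the outcome is $O(x^{-1})$. For the $O((ny)^{-2})$ remainder of (P2), which is not summable on its own over $n\ge1/y$, I would exploit that (P3) is compatible with the term-by-term difference of the explicit part of (P2): subtracting shows this remainder has bounded variation in $n$, with $\sum_n|\Delta\varepsilon_n(y)|=O(1)$ uniformly in $y$, after which summation by parts against the $O(x^{-1})$ partial sums of $r_{n,N}\cos((2n+\nu)x-\lambda)$ closes the estimate. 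Verifying this compatibility of (P2) and (P3), and the bookkeeping that every regime collapses to the single bound $O(x^{-1})$, is where the real work lies.
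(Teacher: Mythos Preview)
Your outline is essentially correct and follows the same architecture as the paper: summation by parts, splitting at $n_1\approx 1/x$ and at $M\approx 1/y$, (P4) for $n\le M$, trigonometric reduction for the rest, and reflection via (P5) for $y>\pi/4$. There are, however, two genuine differences from the paper's argument worth noting.

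First, the paper performs one global Abel summation at the very start, using (S1)--(S2) to reduce immediately to the case $r_{n,N}=1_{\{n\le N\}}$; thereafter only ordinary partial sums $\sum_{n=n_1}^j$ appear. You instead carry $r_{n,N}$ through every estimate. This is harmless but adds bookkeeping.

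Second, and more substantively: in the range $n>M\approx 1/y$ with $y$ small, the paper does \emph{not} expand $u_n(y)$ via (P2). It keeps $u_n(y)$ intact, inserts only the (P2) expansion of $u_n(x)$, and sums by parts against $u_n(y)$; the explicit trigonometric form of $\Delta u_j(y)$ supplied by (P3) is then fed directly into Lemmas~\ref{sineexpansion}--\ref{cosinesuminequalities}. This sidesteps completely the non-summable $O((ny)^{-2})$ remainder that your approach produces. Your proposed fix --- showing $\sum_n|\Delta\varepsilon_n(y)|=O(1)$ from the ``compatibility of (P2) and (P3)'' --- does work, but it is not free: the axioms (P2) and (P3) are stated with independent bounded functions $Z_1,\dots,Z_4$, and you must first argue (e.g.\ by letting $n\to\infty$ along suitable subsequences for a.e.\ $y$) that these $Z_i$ are forced to coincide with the ones obtained by differencing the explicit part of (P2). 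Only then does $\Delta\varepsilon_n(y)=O(n^{-2}y^{-1})$ follow. The paper's route avoids this detour entirely.

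Your ``juggling'' of the uniform bound $|S^1_{M,N}|\le C'$ against the tail bound $O(M^{-1}|\sin t|^{-1})$ is right, but note you will need the analogous tail bound $|C^1_{M,N}(t)|\le CM^{-1}|\sin t|^{-1}$ (which holds by the same Abel summation) for the secondary-in-$y$ terms multiplied by $Y_0(y)/y$ when $y$ is small; the logarithmic bound from Lemma~\ref{cosinesuminequalities} alone is not enough there.
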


\begin{proof}
We know that $$T_{N}\left(  x,y\right)  =\sum_{n=0}^{+\infty
}r_{n,N}u_{n}\left(  x\right)  u_{n}\left(  y\right)$$ and 
$$ D_{N}(x,y)= \frac{2}{\pi} r_{0,N} +\frac{4}{\pi}\sum_{n=0}^{+\infty} r_{n,N}\cos(2nx)\cos(2ny),$$
where $\{r_{n,N}\}$ are sequences satisfying properties S1 and S2.

Summation by parts on the difference $T_{N}\left(  x,y\right)  -D_{N}\left(  x,y\right)$ gives%
\begin{align*}
&T_{N}\left(  x,y\right)  -D_{N}\left(  x,y\right) \\
  &  =\lim_{M\rightarrow
+\infty}r_{M,N}\sum_{n=0}^{M}\left(  u_{n}\left(  x\right)  u_{n}\left(
y\right)  -\frac{4-2\delta_{0n}}{\pi}\cos\left(  2nx\right)  \cos\left(
2ny\right)  \right) \\
&  +\sum_{j=0}^{M-1}\Delta r_{j,N}\sum_{n=0}^{j}\left(  u_{n}\left(  x\right)
u_{n}\left(  y\right)  -\frac{4-2\delta_{0n}}{\pi}\cos\left(  2nx\right)
\cos\left(  2ny\right)  \right)  .
\end{align*}

By the assumptions on $\left\{  r_{n,N}\right\}  $, it is therefore enough to
show that for any $0<x<\pi/2$ there is a $K\left(  x\right)  $ such that for
any $M$
\[
\left\vert \sum_{n=0}^{M}\left(  u_{n}\left(  x\right)  u_{n}\left(  y\right)
-\frac{4-2\delta_{0n}}{\pi}\cos\left(  2nx\right)  \cos\left(  2ny\right)
\right)  \right\vert \leq K\left(  x\right)  .
\]

So, without loss of generality, we will assume $M=N$ and $r_{n,N}=1$ for
$n\leq N$ and $0$ elsewhere. Thus, we can rewrite%
\[
D_{N}\left(  x,y\right)  =\frac{2}{\pi}C_{0,N}^{0}\left(  x-y\right)  +\frac{2}{\pi}C_{1,N}%
^{0}\left(  x+y\right)  .
\]
Notice that for all $0<y<\pi/2$, by Lemma \ref{sineexpansion},
\[
\left\vert C_{1,N}^{0}\left(  x+y\right)  \right\vert \leq\frac{C}{\left\vert
\sin\left(  x+y\right)  \right\vert }\leq\frac{C}{\left\vert \sin x\right\vert
}+\frac{C}{\left\vert \sin\left(  \pi/2-x\right)  \right\vert }.
\]
So we just need to handle the term
\[
T_N(x,y)-\frac 2\pi C^0_{0,N}(x-y).
\]
For symmetry reasons, we can assume without loss of generality that
$0<x\leq\pi/4$. Assume $n_{1}=n_{1}\left(  x\right)  =\max\left\{
n_{0},\left[  2/x\right]  +1\right\}  $. We can also assume without loss of
generality that $N\geq n_{1}$, and replace $T_{N}\left(  x,y\right)  $ with
the kernel%
\[
T_{n_{1},N}\left(  x,y\right)  =\sum_{n=n_{1}}^{N}u_{n}\left(  x\right)
u_{n}\left(  y\right)  ,
\]
and $D_{N}\left(  x,y\right)  $ with $(2/\pi) C_{n_{1},N}^{0}\left(  x-y\right)
$. Thus, it will be enough to study
\[
T_{n_{1},N}\left(  x,y\right)  -\frac{2}{\pi}C_{n_{1},N}^{0}\left(
x-y\right)  .
\]

\textbf{Case 1} We first consider the case $0<x/2\leq y\leq\pi/4$.
Since we are assuming $n\geq n_{1}=\max\left\{
n_{0},\left[  2/x\right]  +1\right\}$, it follows that $y\geq x/2\ge 1/n$ and both
$u_{n}\left(  x\right)  $ and $u_{n}\left(  y\right)  $ can be expanded
according to property (P2), obtaining, after some trigonometric manipulations%
\begin{align*}
&u_{n}\left(  x\right)  u_{n}\left(  y\right)  \\
&=\left(  \frac{2}{\sqrt{\pi
}}\cos\left(  \left(  2n+\nu\right)  x-\lambda\right)  +Y_{0}\left(  x\right)
\frac{2}{nx}\sin\left(  \left(  2n+\nu\right)  x-\lambda\right)  +O\left(
x^{-2}n^{-2}\right)  \right) \\
&  \times\left(  \frac{2}{\sqrt{\pi}}\cos\left(  \left(  2n+\nu\right)
y-\lambda\right)  +Y_{0}\left(  y\right)  \frac{2}{ny}\sin\left(  \left(
2n+\nu\right)  y-\lambda\right)  +O\left(  y^{-2}n^{-2}\right)  \right) \\
&= I_1 +I_2 +I_3,
\end{align*}
where 
\begin{align*}
I_1& =\frac{2}{\pi}\cos\left(  \nu\left(  x-y\right)  \right)  \cos\left(
2n\left(  x-y\right)  \right)  -\frac{2}{\pi}\sin\left(  \nu\left(  x-y\right)  \right)  \sin\left(
2n\left(  x-y\right)  \right) \\
&  +\frac{2}{\pi}\cos\left(  \nu\left(  x+y\right)  -2\lambda\right)
\cos\left(  2n\left(  x+y\right)  \right) \\
&-\frac{2}{\pi}\sin\left(  \nu\left(  x+y\right)  -2\lambda\right)
\sin\left(  2n\left(  x+y\right)  \right), \\
\\
I_2 &=\left(  \frac{2Y_{0}\left(  x\right)  }{\sqrt{\pi}x}-\frac{2Y_{0}\left(
y\right)  }{\sqrt{\pi}y}\right)  \cos\left(  \nu\left(  x-y\right)  \right)
\frac{1}{n}\sin\left(  2n\left(  x-y\right)  \right)\\
&+\left(  \frac{2Y_{0}\left(  x\right)  }{\sqrt{\pi}x}-\frac{2Y_{0}\left(
y\right)  }{\sqrt{\pi}y}\right)  \sin\left(  \nu\left(  x-y\right)  \right)
\frac{1}{n}\cos\left(  2n\left(  x-y\right)  \right) ,\\
\\
 I_3&= \left(  \frac{2Y_{0}\left(  x\right)  }{\sqrt{\pi}x}+\frac{2Y_{0}\left(
y\right)  }{\sqrt{\pi}y}\right)  \cos\left(  \nu\left(  x+y\right)
-2\lambda\right)  \frac{1}{n}\sin\left(  2n\left(  x+y\right)  \right) \\&+\left(  \frac{2Y_{0}\left(  x\right)  }{\sqrt{\pi}x}+\frac{2Y_{0}\left(
y\right)  }{\sqrt{\pi}y}\right)  \sin\left(  \nu\left(  x+y\right)
-2\lambda\right)  \frac{1}{n}\cos\left(  2n\left(  x+y\right)  \right)
+O\left(  x^{-2}n^{-2}\right)  .
\end{align*}
Summing over $n_1\leq n\leq N$ the above expression, we obtain%
\begin{align*}
&  T_{n_{1},N}\left(  x,y\right)  -\frac{2}{\pi}C_{n_{1},N}^{0}\left(
x-y\right) 
= J_1 + J_2 + J_3 - \frac{2}{\pi}C_{n_{1},N}^{0}\left(  x-y\right),
\end{align*}
where 
\begin{align*}
J_1 &=\frac{2}{\pi}\cos\left(  \nu\left(  x-y\right)  \right)  C_{n_{1},N}%
^{0}\left(  x-y\right)  -\frac{2}{\pi}\sin\left(  \nu\left(  x-y\right)
\right)  S_{n_{1},N}^{0}\left(  x-y\right), \\
\\
  J_2&=\frac{2}{\pi}\cos\left(  \nu\left(  x+y\right)  -2\lambda\right)
C_{n_{1},N}^{0}\left(  x+y\right)  -\frac{2}{\pi}\sin\left(  \nu\left(
x+y\right)  -2\lambda\right)  S_{n_{1},N}^{0}\left(  x+y\right) \\
 &+\left(  \frac{2Y_{0}\left(  x\right)  }{\sqrt{\pi}x}-\frac{2Y_{0}\left(
y\right)  }{\sqrt{\pi}y}\right)  \cos\left(  \nu\left(  x-y\right)  \right)
S_{n_{1},N}^{1}\left(  x-y\right) \\
\\
 J_3&=\left(  \frac{2Y_{0}\left(  x\right)  }{\sqrt{\pi}x}-\frac{2Y_{0}\left(
y\right)  }{\sqrt{\pi}y}\right)  \sin\left(  \nu\left(  x-y\right)  \right)
C_{n_{1},N}^{1}\left(  x-y\right) \\
  &+\left(  \frac{2Y_{0}\left(  x\right)  }{\sqrt{\pi}x}+\frac{2Y_{0}\left(
y\right)  }{\sqrt{\pi}y}\right)  \cos\left(  \nu\left(  x+y\right)
-2\lambda\right)  S_{n_{1},N}^{1}\left(  x+y\right) \\
  &+\left(  \frac{2Y_{0}\left(  x\right)  }{\sqrt{\pi}x}+\frac{2Y_{0}\left(
y\right)  }{\sqrt{\pi}y}\right)  \sin\left(  \nu\left(  x+y\right)
-2\lambda\right)  C_{n_{1},N}^{1}\left(  x+y\right)  +O\left(  x^{-1}\right).
\end{align*}

By Lemma \ref{sineexpansion},  $\left(  \cos\left(  \nu\left(
x-y\right)  \right)  -1\right)  C_{n_{1},N}^{0}\left(  x-y\right)  $ and 
$\sin(\nu(x-y))S_{n_1,N}^0(x-y)$ are uniformly bounded in $N\geq n_{1} $ and in $y\in[x/2,\pi/4]$. 
It also shows that all the other terms in the above sum with 
$S_{n_1,N}^0$ or with $C_{n_1,N}^0$ are uniformly bounded by $C|x|^{-1}$.
The observation that for $x/2\leq y\leq\pi/4$,%
\[
\left\vert \frac{2Y_{0}\left(  x\right)  }{\sqrt{\pi}x}\pm\frac
{2Y_{0}\left(  y\right)  }{\sqrt{\pi}y}
  \right\vert
\leq\frac{6}{\sqrt{\pi}x}\left\Vert Y_{0}\right\Vert _{\infty},
\]
and the estimates of Lemma \ref{cosinesuminequalities}, show that 
the remaining terms in the above sum are bounded by $C|x|^{-1}$ uniformly in $N\geq n_{1}$ and in $y\in[x/2,\pi/4]$
(notice that $\pi/2 \ge x+y\ge x\ge 2n_1^{-1}$, so that Lemma \ref{cosinesuminequalities}
implies that $C^1_{n_1,N}(x+y)$ is bounded).

\textbf{Case 2} We need to check what happens in the case $0<y<x/2.$ Of
course, since%
\[
\left\vert \frac{2}{\pi}C_{n_{1},N}^{0}\left(  x-y\right)  \right\vert
\leq\frac{C}{\left\vert \sin\left(  x-y\right)  \right\vert }\leq\frac{C}%
{\sin\left\vert x/2\right\vert },
\]
we only have to study $\left\vert T_{n_{1},N}\left(  x,y\right)  \right\vert
.$ Set%
\[
\sum_{n=n_{1}}^{N}=\left\{
\begin{array}
[c]{ll}%
\sum_{n=n_{1}}^{\left[  y^{-1}\right]+1  }+\sum_{n=\left[  y^{-1}\right]
+2}^{N} & \text{if }[y^{-1}]+1<N\\
\sum_{n=n_{1}}^{N} & \text{if }N\leq[y^{-1}]+1.
\end{array}
\right.
\]
Call $M=\min\left(  N,[y^{-1}]+1\right)  $. Then summation by parts gives
\[
T_{n_{1},M}\left(  x,y\right)  =\sum_{n=n_{1}}^{M}u_{n}\left(  y\right)
u_{n}\left(  x\right)  =u_{M}\left(  y\right)  \sum_{n=n_{1}}^{M}u_{n}\left(
x\right)  +\sum_{j=n_{1}}^{M-1}\Delta u_{j}\left(  y\right)  \sum_{n=n_{1}%
}^{j}u_{n}\left(  x\right)
\]
Let's begin with the term
\[
u_{M}\left(  y\right)  \sum_{n=n_{1}}^{M}u_{n}\left(  x\right)  .
\]
We know that $u_{M}\left(  y\right)  $ is uniformly bounded in $y$ and $M$ by
property (P1), while $\left\vert \sum_{n=n_{1}}^{M}u_{n}\left(  x\right)
\right\vert \leq C\left\vert x\right\vert ^{-1}$. Indeed, by (P2),%
\begin{align}
&  \sum_{n=n_{1}}^{M}u_{n}\left(  x\right) \label{sum_x}\\
&  =\sum_{n=n_{1}}^{M}  \frac{2}{\sqrt{\pi}}\cos\left(  \left(
2n+\nu\right)  x-\lambda\right)  +Y_{0}\left(  x\right)  \frac{2}{nx}%
\sin\left(  \left(  2n+\nu\right)  x-\lambda\right)  +O\left(  x^{-2}%
n^{-2}\right)  \nonumber\\
&  =\frac{2}{\sqrt{\pi}}\cos\left(  \nu x-\lambda\right)  C_{n_{1},M}%
^{0}\left(  x\right)  -\frac{2}{\sqrt{\pi}}\sin\left(  \nu x-\lambda\right)
S_{n_{1},M}^{0}\left(  x\right) \nonumber\\
&  +Y_{0}\left(  x\right)  \frac{2}{x}\cos\left(  \nu x-\lambda\right)
S_{n_{1},M}^{1}\left(  x\right)  +Y_0(x)\frac{2}{x}\sin\left(  \nu x-\lambda\right)
C_{n_{1},M}^{1}\left(  x\right)  +O\left(  x^{-1}\right)  ,\nonumber
\end{align}
and the desired result follows by Lemma \ref{sineexpansion} and Lemma \ref{cosinesuminequalities}.  Notice in particular that since $\pi/4\ge x\ge 2/n_1$, it follows that 
$C_{n_{1},M}^{1}\left(  x\right)$ is bounded.

Since $M-1\leq y^{-1}$,
it follows that for all $j$ between $n_{1}$ and $M-1$ we
have $y\leq (M-1)^{-1}\leq j^{-1}$ and the following bound given by
property (P4) holds%
\[
\left\vert \Delta u_{j}\left(  y\right)  \right\vert \leq C\left(  \left(
yj\right)  ^{\tau}j^{-1}+j^{-2}\right).
\]
Thus,
\begin{align*}
&  \left\vert \sum_{j=n_{1}}^{M-1}\Delta u_{j}\left(  y\right)  \sum_{n=n_{1}%
}^{j}u_{n}\left(  x\right)  \right\vert  \leq C\sum_{j=n_{1}}^{M-1}\left(  \left(  yj\right)  ^{\tau}%
j^{-1}+j^{-2}\right) 
  \left\vert \sum_{n=n_{1}}^{j}
u_n(x)
\right\vert.
\end{align*}
By \eqref{sum_x},
$
\left\vert \sum_{n=n_{1}}^{j}
u_n(x)
 \right\vert
 =O\left(  x^{-1}\right)  .
$
On the other hand,%
\begin{align*}
\sum_{j=n_{1}}^{M-1}\left(  \left(  yj\right)  ^{\tau}j^{-1}%
+j^{-2}\right)&\leq\left(  \sum_{j=n_{1}}^{M-1}\left(  yj\right)
^{\tau}j^{-1}\right)  +C
 \leq C(yM)^{\tau}+C\leq
C.
\end{align*}
Hence we get that $$|T_{n_{1},M}\left(  x,y\right)|\leq C x^{-1}.$$

It remains to deal with the case $N>[y^{-1}]+1=M$ and the sum
\[
\sum_{n=M+1}^{N}u_{n}\left(  y\right)  u_{n}\left(
x\right).
\]
Replacing $u_{n}\left(  x\right)  $ with the expression given by property (P2)
we have
\begin{align}\label{tough}
&  \sum_{n=M+1}^{N}u_{n}\left(  y\right)  u_{n}\left(
x\right)
=   \sum_{n=M+1}^{N}u_{n}\left(  y\right)  \frac
{2}{\sqrt{\pi}}\cos\left(  \left(  2n+\nu\right)  x-\lambda\right)\\
&+\sum_{n=M+1}^{N}u_{n}\left(  y\right)  Y_{0}\left(
x\right)  \frac{2}{nx}\sin\left(  \left(  2n+\nu\right)  x-\lambda\right)
  +\sum_{n=M+1}^{N}u_{n}\left(  y\right)  O\left(
x^{-2}n^{-2}\right).\nonumber
\end{align}
Clearly, by (P1)
\[
\sum_{n=M+1}^{N}u_{n}\left(  y\right)  O\left(
x^{-2}n^{-2}\right)  =O\left(  x^{-2}y\right)  =O\left(  x^{-1}\right)  .
\]
For the remaining terms, apply summation by parts. For example,%
\begin{align*}
&\sum_{n=M +1}^{N}u_{n}\left(  y\right)  \frac{2}%
{\sqrt{\pi}}\cos\left(  \left(  2n+\nu\right)  x-\lambda\right)  
=u_{N}\left(  y\right)  \sum_{n=M  +1}^{N}\frac{2}%
{\sqrt{\pi}}\cos\left(  \left(  2n+\nu\right)  x-\lambda\right) \\
 &+\sum_{j=M +1}^{N-1}\Delta u_{j}\left(  y\right)
\sum_{n=M +1}^{j}\frac{2}{\sqrt{\pi}}\cos\left(  \left(
2n+\nu\right)  x-\lambda\right)  .
\end{align*}
By (P1), $\left\vert u_{N}\left(  y\right)  \right\vert $ is uniformly bounded
in $y$ and in $N$, and as usual
\[
\left\vert \sum_{n=M
+1}^{N}\frac{2}{\sqrt{\pi}}\cos\left(  \left(  2n+\nu\right)  x-\lambda
\right)  \right\vert \leq C\left\vert x\right\vert ^{-1}.
\] 
Thus we need to
study
\[
\sum_{j=M  +1}^{N-1}\Delta u_{j}\left(  y\right)
\sum_{n=M +1}^{j}\cos\left(  \left(  2n+\nu\right)
x-\lambda\right)  .
\]
For all indices $j$ between $M  +1$ and $N-1$ we have
$j^{-1}<y<x/2<x$ so that for $\Delta u_j\left(  y\right)  $ we use the
expansions given by (P3),
\begin{align*}
\Delta u_{j}\left(  y\right)  &=y\left(  Z_{1}\left(  y\right)  \cos\left(
2jy\right)  +Z_{2}\left(  y\right)  \sin\left(  2jy\right)  \right)  \\
&+\frac{1}{j}\left(  Z_{3}\left(  y\right)  \cos\left(  2jy\right)  +Z_{4}\left(
y\right)  \sin\left(  2jy\right)  \right)
 +O\left(  j^{-2}y^{-1}\right)  .
\end{align*}
Thus, 
\begin{align*}
&  \sum_{j=M+1}^{N-1}\Delta u_{j}\left(  y\right)  \sum_{n=M+1}^{j}\cos\left(
\left(  2n+\nu\right)  x-\lambda\right) \\
&  =\sum_{j=M+1}^{N-1}\left(  y\left(  Z_{1}\left(  y\right)  \cos\left(
2jy\right)  +Z_{2}\left(  y\right)  \sin\left(  2jy\right)  \right)\phantom{\vrule height 3.6ex depth 0pt width 0pt}  \right.\\
&\qquad\qquad\left. +\frac
{1}{j}\left(  Z_{3}\left(  y\right)  \cos\left(  2jy\right)  +Z_{4}\left(
y\right)  \sin\left(  2jy\right)  \right)+O\left(  j^{-2}y^{-1}\right)
\right) \\
&\qquad\times\left(  \cos\left(  \nu x-\lambda\right)  C_{M+1,j}^{0}\left(  x\right)
-\sin\left(  \nu x-\lambda\right)  S_{M+1,j}^{0}\left(  x\right)  \right) \\
&=\sum_{j=M+1}^{N-1}\left(\phantom{\vrule height 3.6ex depth 0pt width 0pt}     y\left(Z_{1}\left(  y\right)  \cos\left(
2jy\right)  +Z_{2}\left(  y\right)  \sin\left(  2jy\right)  \right)  \right.\\
&\qquad\qquad\left.+\frac
{1}{j}\left(  Z_{3}\left(  y\right)  \cos\left(  2jy\right)  +Z_{4}\left(
y\right)  \sin\left(  2jy\right)  \right) +O\left(  j^{-2}y^{-1}\right)
\right)
 \\
&\qquad\times\left(  \cos\left(  \nu x-\lambda\right)  \frac{\cos\left(  \left(
j+M+1\right)  x\right)  \sin\left(  \left(  j-M\right)  x\right)  }{\sin
x}\right.\\
&\qquad\qquad\left.-\sin\left(  \nu x-\lambda\right)  \frac{\sin\left(  \left(  j+M+1\right)
x\right)  \sin\left(  \left(  j-M\right)  x\right)  }{\sin x}\right)
\end{align*}
Simple trigonometric identities reduce the products
\[
{\cos \choose{\sin}}(2jy) {\cos\choose\sin} ((j+M+1)x)\sin((j-M)x)
\]
to linear combinations of terms of the form 
\[
{\cos\choose\sin}(2jy),\quad{\cos\choose\sin}((2j(x\pm y)),
\]
with coefficients that do not depend on $j$ and that are bounded in $x$ and $y$. Summing over $j$,
we obtain that 
\[
\sum_{j=M+1}^{N-1}\Delta u_{j}\left(  y\right)  \sum_{n=M+1}^{j}\cos\left(
\left(  2n+\nu\right)  x-\lambda\right)
\]
equals $(\sin x)^{-1}$ times the linear combination of expressions of the form 
\[
yC_{M+1,N-1}^{0}\left(  t\right)  ,\quad yS_{M+1,N-1}^{0}\left(  t\right)  ,\quad
C_{M+1,N-1}^{1}\left(  t\right)  ,\quad S_{M+1,N-1}^{1}\left(  t\right)  ,
\]
where $t$ can be $x$ or $y$ or $x\pm y$, and with coefficients that are bounded in $x$ and $y$.

By Lemma \ref{sineexpansion} and Lemma \ref{cosinesuminequalities}, all these are
uniformly bounded. In particular, since $M=[y^{-1}]+1$, we have $\pi/2\ge t\ge y\ge (M+1)^{-1}$
and, by Lemma \ref{cosinesuminequalities}, $C^1_{M+1,N-1}(t)$ is uniformly bounded.

Finally, 
\[
\sum_{j=M+1}^{N-1}O(j^{-2}y^{-1})O(|\sin x|^{-1})=O(M^{-1}y^{-1})O(x^{-1})=O(x^{-1}).
\]

It remains to bound the term
\[
\frac 1 x\sum_{n=M+1}^N\frac{u_n(y)}n\sin((2n+\nu)x-\lambda)
\]
in formula \eqref{tough}.
Summation by parts gives
\begin{align*}
&\frac 1{xN} u_N(y)\sum_{n=M+1}^N\sin((2n+\nu)x-\lambda)\\
&+\frac 1x \sum_{j={M+1}}^{N-1}
\left(\frac{u_j(y)}j-\frac{u_{j+1}(y)}{j+1}\right)\sum_{n=M+1}^j\sin((2n+\nu)x-\lambda)
\end{align*}
Since $x>2y>2/N$, we deduce that $(xN)^{-1}$ is bounded and since, as usual, $\sum_{n=M+1}^N\sin((2n+\nu)x-\lambda)=O(x^{-1})$, the first term in the above sum is settled. By {(P1)},
\[
\frac{u_j(y)}j-\frac{u_{j+1}(y)}{j+1}=\frac{\Delta u_j(y)}{j}+O(j^{-2}),
\]
and we are left with
\[
\frac 1x \sum_{j={M+1}}^{N-1}
\frac{\Delta u_j(y)}{j}\sum_{n=M+1}^j\sin((2n+\nu)x-\lambda)
+\frac 1x\sum_{j=M+1}^{N-1}O(j^{-2})O(x^{-1}).
\]
The remainder term gives $O(x^{-2}M^{-1})=O(x^{-2}y)=O(x^{-1})$. The principal part can be treated as for the previous term of \eqref{tough}, noticing that, by (P3), 
\[
\frac{\Delta u_j(y)}{j}=yZ_1(y)\frac{\cos(2jy)}j+yZ_2(y)\frac{\sin(2jy)}j+O(j^{-2}).
\] 

\textbf{Case 3}. We will deal with the case when $y\in\left[ \pi/4,\pi/2\right)  $.
Observe that by property (P5), setting $z=\pi/2-y$ so that $z\in\left(  0,{\pi}/{4}\right]  $,%
\begin{align*}
&  T_{n_{1},N}\left(  x,y\right)  -\frac{2}{\pi}C_{n_{1},N}^{0}\left(
x-y\right) \\
&  =\sum_{n=n_{1}}^{N}\left(  u_{n}\left(  x\right)  u_{n}\left(  y\right)
-\frac{4-2\delta_{0,n}}{\pi}\cos\left(  2nx\right)  \cos\left(  2ny\right)
\right) \\
&  =\sum_{n=n_{1}}^{N}\left(  \left(  -1\right)  ^{n}u_{n}\left(  x\right)
U_{n}\left( z\right)  -\frac{4-2\delta_{0,n}}{\pi}\left(
-1\right)  ^{n}\cos\left(  2nx\right)  \cos\left(  2n\left( z\right)  \right)  \right)+O\left(  1\right),
\end{align*}
and we can proceed as we did in cases 1 and 2, this time using also Lemma \ref{zero_minus} and Lemma \ref{one_minus}.
We leave the details to the reader. 
\end{proof}

\subsection {Proof of Theorem \ref{equiconv_L1}}

We are now in a position to prove Theorem \ref{equiconv_L1}.

\begin{proof} 
Fix a positive $\varepsilon.$ Let $g\in\Omega$ be such that
\[
\left\Vert f-g\right\Vert _{L^{1}\left(  \left[  0,\pi/2\right]\right)  }\leq\frac{\varepsilon}{2K\left(  x\right)  }.
\]
Then%
\begin{align*}
&  \left\vert  T _{N}f\left(  x\right)  -D_{N}f\left(  x\right)
\right\vert \\
&  \leq\left\vert T _{N}\left(  f-g\right)  \left(  x\right)
-D_{N}\left(  f-g\right)  \left(  x\right)  \right\vert +\left\vert
T_{N}g\left(  x\right)  -D_{N}g\left(  x\right)  \right\vert \\
&  \leq\int_{0}^{\pi/2}K\left(  x\right)  \left\vert f\left(  y\right)
-g\left(  y\right)  \right\vert dy+\left\vert T_{N}g\left(  x\right)
-D_{N}g\left(  x\right)  \right\vert \\
&  \leq\frac{\varepsilon}{2}+\left\vert T_{N}g\left(  x\right)  -D_{N}g\left(
x\right)  \right\vert <\varepsilon
\end{align*}
for $N$ sufficiently big so that $\left\vert T_{N}g\left(  x\right)
-D_{N}g\left(  x\right)  \right\vert <\varepsilon/2$. The second part of the
theorem follows from the estimates on $K\left(  x\right)  $ in Theorem
\ref{kernel}.
\end{proof}

\section{Perturbed Jacobi operator in normal form}\label{L_normal}

In this section we consider the operator given by 

\[
\ell u:=-u^{\prime\prime}+\left(  \left(  \alpha^{2}-\frac{1}{4}\right)
\cot^{2}t+\left(  \beta^{2}-\frac{1}{4}\right)  \tan^{2}t-\chi\left(
t\right)  \right)  u
\]
where
$\chi$ is a twice continuously differentiable function on $\mathbb R$, even with respect to $0$ and
$\pi/2$, and $\alpha\ge\beta>-1/2$. We will identify the proper domain where $\ell$ is self-adjoint,
and show that the corresponding orthonormal basis of eigenfunctions satisfies properties (P1) to (P5).

It is clear that the operator $\ell$ has a singularity at $0$ and $\frac{\pi
}{2}$. We will deal with the singularities separately to study the properties of the solution of 
the eigenvalue problem
\begin{equation}
\ell u=\mu u \label{A}
\end{equation}
We know that $\cot t$ behaves like $t^{-1}$ near $0^+.$ We try to approximate $\ell$ by the Bessel equation of order $\alpha$ near $0.$ 
After adding and subtracting $(\alpha^{2}-1/4)t^{-2}$ we get the following:
\begin{equation}
\ell u=-u^{\prime\prime}+\left(  \left(  \alpha^{2}-\frac{1}{4}\right)
t^{-2}-\eta_{0}\left(  t\right)  \right)  u=\mu u, \label{B}%
\end{equation}
where%
\[
\eta_{0}\left(  t\right)  =-\left(  \alpha^{2}-\frac{1}{4}\right)  \left(
\cot^{2}t-\frac{1}{t^{2}}\right)  -\left(  \beta^{2}-\frac{1}{4}\right)
\tan^{2}t+\chi\left(  t\right)
\]
is even with respect to $0$ and in $\mathcal C^{2}\left(  -\pi/2,\pi/2\right)  .$
Again $\tan t$ behaves like $(\pi/2-t)^{-1}$ near $\pi/2^{-}$.
Similarly, adding and subtracting $\left(  \beta^{2}-1/4\right)  \left(
\pi/2-t\right)  ^{-2}$ we get%
\[
\ell u=-u^{\prime\prime}+\left(  \left(  \beta^{2}-\frac{1}{4}\right)  \left(
\frac{\pi}{2}-t\right)  ^{-2}-\eta_{1}\left(  \frac{\pi}{2}-t\right)  \right)
u=\mu u
\]
where%
\[
\eta_{1}\left(  t\right)  =-\left(  \beta^{2}-\frac{1}{4}\right)  \left(
\cot^{2}t-\frac{1}{t^{2}}\right)  -\left(  \alpha^{2}-\frac{1}{4}\right)
\tan^{2}t+\chi\left(  \frac{\pi}{2}-t\right)
\]
is also even with respect to $0$ and in $C^{2}\left( -\pi/2,\pi/2\right)$.
We need asymptotics of solutions of \eqref{A} near $0^+$ and $\pi/2^-$ for obtaining the properties (P1) to (P5) of the corresponding eigenfunctions.
We state the following asymptotic expansion, as found in \cite{BG}. The result is 
classic, see also \cite{FH, ST}.
\begin{theorem}
\label{BG}If we set
\[
X_{0}\left(  t\right)  =\int_{0}^{t}\eta_{0}\left(  s\right)  ds,
\]
then there exists a unique twice differentiable solution $V_{\mu,\alpha}$ of
\eqref{A} such that%

$$\left\vert V_{\mu,\alpha}\left(  t\right)  -\frac{2^{\alpha}\Gamma\left(
\alpha+1\right)  t^{1/2}}{\left(  \sqrt{\mu}\right)  ^{\alpha}}\left(
J_{\alpha}\left(  \sqrt{\mu}t\right)  -\frac{1}{2}X_{0}\left(  t\right)
\frac{J_{\alpha+1}\left(  \sqrt{\mu}t\right)  }{\sqrt{\mu}}\right)
\right\vert $$ $$\leq C\frac{t^{2}\min\left(  1,\left\vert \sqrt{\mu_{n}%
}t\right\vert \right)  ^{\alpha+5/2}}{\left\vert \sqrt{\mu}\right\vert
^{\alpha+5/2}} \label{V_mu_alpha}$$%
uniformly in $t\in\left(  0,\pi/4+\varepsilon\right)  $ and $\mu>1$. 
When
$\alpha=0$, the right hand side above has to be multiplied by the extra
factor $\log\left(  2/\min(1,\left\vert \sqrt{\mu}\right\vert t)\right)  $.
Similarly, if we set%
\[
X_{1}\left(  t\right)  =\int_{0}^{t}\eta_{1}\left(  s\right)  ds,
\]
then there exists a unique twice differentiable solution $W_{\mu,\beta}$ of
\eqref{A} such that
$$
\left\vert W_{\mu,\beta}\left(  \frac{\pi}{2}-t\right)  -\frac{2^{\beta}%
\Gamma\left(  \beta+1\right)  t^{1/2}}{\left(  \sqrt{\mu}\right)  ^{\beta}%
}\left(  J_{\beta}\left(  \sqrt{\mu}t\right)  -\frac{1}{2}X_{1}\left(
t\right)  \frac{J_{\beta+1}\left(  \sqrt{\mu}t\right)  }{\sqrt{\mu}}\right)
\right\vert$$ $$\leq C\frac{t^{2}\min\left(  1,\left\vert \sqrt{\mu}\right\vert
t\right)  ^{\beta+5/2}}{\left\vert \sqrt{\mu}\right\vert ^{\beta+5/2}}
\label{W_mu_beta}%
$$
uniformly in $t\in\left(  0,\pi/4+\varepsilon\right)  $ and $\mu>1$. Again,
when $\beta=0$, the right hand side above has to be multiplied by the extra
factor $\log\left(  2/\min(1,\left\vert \sqrt{\mu}\right\vert t)\right)  .$
\end{theorem}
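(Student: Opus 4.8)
The plan is to treat \eqref{B} as a perturbation of the Bessel equation and to recast it as a Volterra integral equation that is solved by successive approximation. First I would record the two explicit solutions of the unperturbed equation $-u''+\left((\alpha^2-1/4)t^{-2}-\mu\right)u=0$, namely $\phi(t)=\sqrt{t}\,J_\alpha(\sqrt{\mu}\,t)$ and $\psi(t)=\sqrt{t}\,Y_\alpha(\sqrt{\mu}\,t)$: the Liouville change of variable turns Bessel's ODE into exactly this normal form, and a direct computation using $W[J_\alpha,Y_\alpha](z)=2/(\pi z)$ shows that their Wronskian is the constant $2/\pi$, as it must be since the equation has no first-order term. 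Choosing the normalization $\phi_0(t)=\frac{2^\alpha\Gamma(\alpha+1)}{(\sqrt{\mu})^\alpha}\phi(t)$, so that $\phi_0(t)\sim t^{\alpha+1/2}$ as $t\to0^+$, variation of parameters converts \eqref{B} into
\[
V_{\mu,\alpha}(t)=\phi_0(t)-\frac{\pi}{2}\int_0^t\bigl[\phi(s)\psi(t)-\phi(t)\psi(s)\bigr]\eta_0(s)V_{\mu,\alpha}(s)\,ds.
\]

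Second, existence and uniqueness of the twice-differentiable $V_{\mu,\alpha}$ follow from the Neumann series for this Volterra equation. The convergence rests on the standard Bessel bounds $J_\alpha(z)=O(z^\alpha)$ and $Y_\alpha(z)=O(z^{-\alpha})$ as $z\to0^+$ for $\alpha>0$ (with $Y_0(z)=O(\log z)$, which is the sole source of the extra logarithm in the statement), together with $J_\alpha(z),Y_\alpha(z)=O(z^{-1/2})$ as $z\to+\infty$. Since $\eta_0$ is $\mathcal C^2$ and hence bounded on $(0,\pi/4+\varepsilon)$, these estimates bound the kernel $\frac{\pi}{2}\bigl[\phi(s)\psi(t)-\phi(t)\psi(s)\bigr]\eta_0(s)$ well enough to make the iteration a contraction with the correct decay in $\mu$, yielding the unique solution with the prescribed regular behaviour $\sim t^{\alpha+1/2}$ at the origin.

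Third, I would identify the first two terms of the series. The zeroth iterate is $\phi_0$. For the first correction
\[
-\frac{\pi}{2}\int_0^t\bigl[\phi(s)\psi(t)-\phi(t)\psi(s)\bigr]\eta_0(s)\phi_0(s)\,ds,
\]
I would use the Lommel cross-product integrals for $s\mapsto sJ_\alpha(\sqrt{\mu}s)^2$ and $s\mapsto sJ_\alpha(\sqrt{\mu}s)Y_\alpha(\sqrt{\mu}s)$ together with the recurrence $\frac{d}{dz}\bigl[z^{-\alpha}J_\alpha(z)\bigr]=-z^{-\alpha}J_{\alpha+1}(z)$. "Freezing" $\eta_0(s)$ and integrating, the index shift $\alpha\to\alpha+1$ appears and the accumulated mass $\int_0^t\eta_0=X_0(t)$ is produced, giving exactly the term $-\tfrac12 X_0(t)J_{\alpha+1}(\sqrt{\mu}t)/\sqrt{\mu}$ (times the normalization); the variation of $\eta_0$ across $[0,t]$, controlled by $\eta_0\in\mathcal C^2$, contributes only to the remainder. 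Finally I would bound the tail, i.e.\ the sum of all iterates from the second on, by $C\,t^2\min(1,|\sqrt{\mu}t|)^{\alpha+5/2}/|\sqrt{\mu}|^{\alpha+5/2}$, carrying the powers of $t$ and of $\sqrt{\mu}$ separately through the two regimes $\sqrt{\mu}t\lesssim1$ and $\sqrt{\mu}t\gtrsim1$. The statement for $W_{\mu,\beta}$ near $\pi/2$ is obtained verbatim after the reflection $t\mapsto\pi/2-t$, replacing $\alpha,\eta_0,X_0$ by $\beta,\eta_1,X_1$.

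The hard part will be precisely this last uniform error estimate. Because $\phi$ and $\psi$ behave so differently in the small- and large-argument ranges, one must glue the two sets of Bessel asymptotics so that the geometric-type bound on the Neumann tail collapses to the single clean expression $C\,t^2\min(1,|\sqrt{\mu}t|)^{\alpha+5/2}/|\sqrt{\mu}|^{\alpha+5/2}$, uniformly in both $t\in(0,\pi/4+\varepsilon)$ and $\mu>1$, with the logarithmic factor surfacing if and only if $\alpha=0$ through the behaviour of $Y_0$ at the origin.
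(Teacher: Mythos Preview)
Your proposal is essentially correct and follows the classical Volterra--Neumann approach that the paper itself defers to: the paper does not prove existence at all, it simply cites \cite{BG,FH,ST} and remarks that the argument there goes through under the present hypotheses. Your sketch is therefore a legitimate outline of what those references actually do, and in that sense you have supplied more than the paper does.

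Two remarks on the comparison. First, for uniqueness the paper gives a slightly different (and shorter) argument than yours: rather than invoking uniqueness for the Volterra equation, it observes that any second linearly independent solution behaves like $t^{-|\alpha|+1/2}$ (or $t^{1/2}\log t$ when $\alpha=0$) near $0$, so the difference of two solutions obeying the stated bound would be $O(t^{\alpha+9/2})$, which is $o(t^{|\alpha|+1/2})$ and hence must vanish. Both arguments are valid and yield the same conclusion.

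Second, your description of how the first iterate produces $-\tfrac12 X_0(t)J_{\alpha+1}(\sqrt{\mu}t)/\sqrt{\mu}$ via ``freezing $\eta_0$'' and Lommel integrals is the one place where the sketch is a bit loose. The antiderivative $X_0(t)=\int_0^t\eta_0$ does not arise by replacing $\eta_0(s)$ with a constant; it arises either by integrating $\eta_0(s)\,ds=dX_0(s)$ by parts inside the Volterra kernel, or---more directly, as in \cite{BG,FH}---by writing down the ansatz $\phi_0(t)-\tfrac12 X_0(t)\cdot c_\alpha\sqrt{t}\,J_{\alpha+1}(\sqrt{\mu}t)/\sqrt{\mu}$ and verifying via the recurrence $J_{\alpha+1}'(z)=J_\alpha(z)-\tfrac{\alpha+1}{z}J_{\alpha+1}(z)$ that it solves the perturbed equation up to a remainder of the right size (here the evenness and $\mathcal C^2$ regularity of $\eta_0$ are used to control $\eta_0'$ and $\eta_0(t)-X_0(t)/t$). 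Either route works, and once this is in hand the tail estimate is exactly the two-regime bookkeeping you describe.
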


\begin{proof}
For the proof, we refer the reader to the above mentioned references. 
Regarding existence, we only want to mention that the proof in 
\cite{BG} works under the hypotheses we have here.

Concerning uniqueness, observe that the difference
between two solutions satisfying the above estimate would also be a solution, too small close to $0$.
Indeed it is not difficult to show that there is a second solution of the
equation that goes to zero as $t^{-\alpha+1/2}$ for $\alpha\neq0$, and as
$t^{1/2}\log t$ for $\alpha=0$. Thus the difference between any two different
solutions cannot be $o\left(  t^{\left\vert \alpha\right\vert +1/2}\right)  $
as $t\rightarrow0+.$ On the other hand, the difference of two solutions
satisfying the bounds of the theorem should be $O\left(  t^{\alpha
+9/2}\right)  $ as $t\rightarrow0+$, and this is absurd.
\end{proof}

Recall that $\ell$ is a second order differential operator on $(0,\pi/2)$ with singularities at $0$ and $\frac{\pi}{2}.$ It is easy to check using integration by parts that $\ell$ is a symmetric operator when applied to smooth functions with compact support in $(0,\pi/2).$ We want to extend $\ell$ as a self-adjoint operator in $L^2((0,\pi/2))$ to obtain an orthonormal basis of $L^2((0,\pi/2))$ such that the basis elements are the eigenfunctions of $\ell.$ Once we get an orthogonal expansion we can talk about the convergence of partial sum operator with respect to that expansion.

Niessen and Zettl \cite{NZ} have proved the existence of self-adjoint extensions of a general class of Sturm-Liouville operators on $(a,b)$ with singularities at $a$ and $b.$ They classify all the possible self-adjoint extensions of non-oscillatory Sturm-Liouville operators. The self-adjoint extensions depend on the boundary conditions at the end points. In order to apply the result of Niessen and Zettl we have to specify the boundary conditions in such a way that when $B\equiv 1$ the eigenfunctions are Jacobi polynomials.  Before going further we will give some new definitions. For further details the reader is referred to \cite{NZ}.

 \begin{definition}A differential equation is oscillatory at an (or
both) endpoint(s) if the zeros of one, and hence every, non-trivial real valued solution
accumulate at the endpoint(s), otherwise it is called non-oscillatory.
\end{definition}
From the asymptotics of the solution of the above differential equation
\eqref{A} in Theorem \ref{BG}, it is clear that $\ell$ is non oscillatory at both endpoints.

We say $u$ is a principal solution of \eqref{A} at $0$ or $\frac{\pi}{2}$ if
for any other real valued solution $y$ of \eqref{A} which is not a multiple of
$u$ we have $u(t)=o(y(t))$ as $t\rightarrow0$ or $\frac{\pi}{2}$, otherwise we
say it is non principal.

By Theorem \ref{BG} it is easy to see that $V_{\mu,\left\vert \alpha\right\vert
}\left(  t\right)  $ as defined above satisfies
\[
\int_{0}^{\varepsilon}\frac{1}{\left\vert V_{\mu,\left\vert \alpha\right\vert
}\left(  t\right)  \right\vert ^{2}}dt\sim\int_{0}^{\varepsilon}\frac
{1}{t^{2\left\vert \alpha\right\vert +1}}dt=+\infty.
\]

So by Theorem 2.2 page 548 in \cite{NZ}, $V_{\mu,\left\vert \alpha\right\vert
}\left(  t\right)  $ is a principal solution at $0.$ Similarly we can show
that $W_{\mu,\left\vert \beta\right\vert }\left(  t\right)  $ is a principal
solution at $\pi/2$.

By inspection, one can easily see that a second solution, linearly independent
of $V_{\mu,\left\vert \alpha\right\vert }\left(  t\right)  $, is given by%
\[
\widetilde{V}_{\mu,\alpha}\left(  t\right)  =V_{\mu,\left\vert \alpha\right\vert
}\left(  t\right)  \int_{t}^{\varepsilon}\frac{1}{V_{\mu,\left\vert
\alpha\right\vert }\left(  u\right)  ^{2}}du.
\]
Notice that, as $t\rightarrow0+,$
\[
\widetilde{V}_{\mu,\alpha}\left(  t\right)  \sim V_{\mu,\left\vert \alpha
\right\vert }\left(  t\right)  \int_{t}^{\varepsilon}\frac{1}{u^{2\left\vert
\alpha\right\vert +1}}du\sim\left\{
\begin{array}
[c]{l}%
t^{-\left\vert \alpha\right\vert +1/2}\text{ if }\alpha\neq0\\
-t^{1/2}\log t\text{ if }\alpha=0.
\end{array}
\right.
\]
(similar estimates hold for $\widetilde{W}_{\mu,\beta}\left(  t\right)  $ near
$\pi/2$).

If $\alpha<1$ then $\ell$ is in the limit circle case at $0$, i.e. every
solution of $\ell u=\mu u$ is in $L^{2}\left( ( 0,\varepsilon)\right)  $, and if
$\beta<1$ then $\ell$ is in the limit circle case at $\pi/2,$ i.e. every solution
of $\ell u=\mu u$ is in $L^{2}\left(  (\pi/2-\varepsilon,\pi/2)\right)  $. Else,
by definition we say that $\ell$ is in the limit point case.
It is known  \cite[\S 19.4 Theorem 4]{Naimark} that this classification
is independent of $\mu$ (this was anyway transparent here, by the above considerations on the linearly independent solutions $V_{\mu,|\alpha|}$ and $\widetilde V_{\mu,\alpha}$). 


The unbounded operator $\ell:\mathcal{D\subset}L^{2}\rightarrow L^{2}$ is
defined in
\[
\mathcal{D=}\left\{  f,\,f^{\prime}\in\mathrm{AC}_{\mathrm{loc}},\,f,\ell\left(
f\right)  \in L^{2}\left(  0,\pi/2\right)  \right\},
\]
where $\mathrm{AC}_{\mathrm{loc}}$ is the class of absolutely continuous
functions on all the compact subintervals of $(0,\pi/2)$.
It is known (see \cite{Naimark} or \cite[page 549]{NZ}) that
$\mathcal D$ is dense in $L^2(0,\pi/2)$. 

\begin{proposition}
\label{boundary conditions}The operator $\tilde{\ell}$ obtained as the
restriction of $\ell$ to the domain%
\[
\mathcal{D}(\tilde{\ell})=\left\{
\begin{array}
[c]{ll}%
\left\{  y\in\mathcal{D}:\left[  y,V_{\mu,\alpha}\right] \left(
0\right)  =\left[  y,W_{\widetilde{\mu},\beta}\right]\left(
\pi/2\right)  =0\right\}  & \text{if }-1/2<\beta\leq\alpha<1\\
\left\{  y\in\mathcal{D}:\left[  y,W_{\mu,\beta}\right] \left(
\pi/2\right)  =0\right\}  & \text{if }-1/2<\beta<1\leq\alpha\\
\mathcal{D} & \text{if }1\leq\beta\leq\alpha
\end{array}
\right.
\]
is self-adjoint and bounded from below. Here%
\begin{align*}
\left[  y,u\right]\left(  0\right)   &  =\lim_{t\rightarrow0+}\left(
y\left(  t\right)  \overline{u^{\prime}\left(  t\right)  }-y^{\prime}\left(
t\right)  \overline{u\left(  t\right)  }\right) \\
\left[  y,u\right] \left(  \pi/2\right)   &  =\lim_{t\rightarrow
\pi/2-}\left(  y\left(  t\right)  \overline{u^{\prime}\left(  t\right)
}-y^{\prime}\left(  t\right)  \overline{u\left(  t\right)  }\right)  ,
\end{align*}
and the domain is independent of the choice of $\mu$, $\widetilde{\mu}$,
and the function $\chi$.
All eigenvalues are simple and can be ordered by 
\[
\mu_{0}<\mu_{1}<\ldots<\mu_{n}<\ldots
\]
with $\mu_{n}\rightarrow+\infty$. More precisely,
\[
\mu_n=4n^2+4(\alpha+\beta+1)n+O(1),\quad\text{ as }n\to+\infty. 
\]
\end{proposition}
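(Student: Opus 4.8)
The plan is to treat the four assertions—self-adjointness, boundedness from below, independence of the domain from the parameters, and the eigenvalue asymptotics—separately, the last being by far the most substantial. Self-adjointness and the choice of boundary conditions follow from the Niessen--Zettl theory \cite{NZ} together with the limit point / limit circle classification already established above. When $\alpha<1$ the equation is in the limit circle case at $0$ and a single boundary condition is required there; we impose it through the Lagrange bracket with the principal solution $V_{\mu,\alpha}$. Likewise $\beta<1$ forces a boundary condition at $\pi/2$, imposed via $W_{\widetilde\mu,\beta}$, whereas $\alpha\ge1$ (resp. $\beta\ge1$) puts us in the limit point case and no condition is needed, which is exactly how the three cases in the statement are organised. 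Boundedness from below is a form-domain estimate: since $\alpha^2-1/4\ge-1/4$ and $\beta^2-1/4\ge-1/4$, the one-dimensional Hardy inequality $\int|u'|^2\ge\frac14\int t^{-2}|u|^2$ (and its analogue at $\pi/2$) controls the two inverse-square singularities, so the quadratic form of the unperturbed Jacobi operator $J$ is bounded below; subtracting the bounded potential $\chi$ preserves this.

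For the independence of $\mathcal D(\tilde\ell)$ from $\mu$, $\widetilde\mu$ and $\chi$, I would argue that the bracket $[y,V_{\mu,\alpha}](0)$ detects precisely the non-principal component of $y$. Writing $y\sim aV_{\mu,\alpha}+b\widetilde V_{\mu,\alpha}$ near $0$, one has $[y,V_{\mu,\alpha}](0)=b\,[\widetilde V_{\mu,\alpha},V_{\mu,\alpha}](0)$, and the latter bracket is a nonzero constant (the Wronskian of two independent solutions), while $[V_{\mu,\alpha},V_{\mu',\alpha}](0)=0$ because both are principal and hence asymptotically proportional to $t^{\alpha+1/2}$ (or $t^{1/2}$ with the logarithmic factor when $\alpha=0$), a leading behaviour independent of $\mu$ and $\chi$ by Theorem \ref{BG}. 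Thus the vanishing of the bracket is equivalent to the $\mu$- and $\chi$-independent requirement that $y$ carry no $\widetilde V$ component, and similarly at $\pi/2$.

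Discreteness, simplicity and ordering are standard once self-adjointness is in hand: the resolvent of a bounded-below Sturm--Liouville operator on a finite interval with these endpoint behaviours is compact, so the spectrum consists of eigenvalues tending to $+\infty$; simplicity holds because at each endpoint at most a one-dimensional space of solutions survives the boundary or $L^2$ condition, so an eigenvalue cannot have two independent eigenfunctions. The heart of the proof is the asymptotic $\mu_n=4n^2+4(\alpha+\beta+1)n+O(1)$. A number $\mu$ is an eigenvalue if and only if $V_{\mu,\alpha}$ and $W_{\mu,\beta}$ are linearly dependent, i.e. their Wronskian $\mathcal W(\mu)$ vanishes. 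I would evaluate $\mathcal W(\mu)$ at the interior point $t=\pi/4$ using Theorem \ref{BG} and the large-argument expansion $J_\alpha(z)\sim\sqrt{2/(\pi z)}\cos(z-\alpha\pi/2-\pi/4)$: both $V_{\mu,\alpha}(\pi/4)$ and $W_{\mu,\beta}(\pi/4)$ become cosine oscillations in $\sqrt\mu$ with explicit phases, so that $\mathcal W(\mu)$ is, to leading order, a nonzero constant times $\sin\!\bigl(\tfrac{\pi}{2}\sqrt\mu-\tfrac{\pi}{2}(\alpha+\beta+1)\bigr)$. Setting this argument equal to $n\pi$ gives $\sqrt{\mu_n}=2n+(\alpha+\beta+1)+o(1)$, and squaring yields the claim.

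The main obstacle is precisely the rigorous control of $\mathcal W(\mu)$: one must show that the error terms in Theorem \ref{BG}, the correction terms carrying $X_0$, $X_1$, and the subleading Bessel corrections are all $o(1)$ relative to the leading oscillation—so that each shifts $\sqrt{\mu_n}$ by only $o(1)$ and hence $\mu_n$ by $O(1)$—and that the zeros of $\mathcal W$ are counted correctly, which I would confirm by matching the zero count against the Sturm oscillation count $n\sim\sqrt{\mu_n}/2$ of the eigenfunctions.
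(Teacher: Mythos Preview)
Your treatment of self-adjointness, the independence of the domain, and simplicity is close to the paper's, which also relies on Niessen--Zettl; the paper is a bit more careful in splitting the cases according to the sign of $\alpha,\beta$ (principal versus non-principal solution) rather than just $\alpha\lessgtr1$, and for boundedness from below it does not invoke Hardy but simply quotes \cite[Theorem~4.2]{NZ} for the preminimal operator and inherits the bound through the Friedrichs extension.

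The substantial divergence is in the eigenvalue asymptotics. You propose to locate eigenvalues as zeros of the Wronskian $\mathcal W(\mu)$ of $V_{\mu,\alpha}$ and $W_{\mu,\beta}$, expand $\mathcal W$ at $t=\pi/4$ via Theorem~\ref{BG}, and solve the resulting phase equation. The paper bypasses this entirely: since $\chi$ is bounded, say $m\le-\chi\le M$, the min--max characterisation of eigenvalues gives $\mu_n^J+m\le\mu_n\le\mu_n^J+M$, where $\mu_n^J=(2n+1)^2+2(2n+1)(\alpha+\beta)+2\alpha\beta+\tfrac12$ are the explicit Jacobi eigenvalues. The claimed asymptotic is then immediate, with no asymptotic analysis needed. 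This comparison argument is what makes the ``main obstacle'' you identify disappear. Your shooting approach is not wrong---indeed the paper uses exactly that strategy later (Lemma~\ref{asymptotic eigenvalue two}) to obtain the \emph{second}-order expansion $\sigma_n=2n+1+\alpha+\beta-\Theta/(4n)+O(n^{-2})$---but it is overkill here and, as you acknowledge, requires nontrivial control of remainders.

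One genuine slip: you write that an $o(1)$ perturbation of $\sqrt{\mu_n}$ yields an $O(1)$ perturbation of $\mu_n$. That is false: if $\sqrt{\mu_n}=2n+c+\varepsilon_n$ then $\mu_n=4n^2+4cn+4n\varepsilon_n+O(1)$, so you need $\varepsilon_n=O(1/n)$, not merely $o(1)$. The correction terms in Theorem~\ref{BG} do in fact give $O(1/n)$, so your argument can be repaired, but as written the conclusion does not follow.
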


\begin{proof}
Assume first $0\le\beta\le\alpha$ so that both $V_{\mu,\alpha}$ and $W_{\widetilde\mu,\beta}$ are principal solutions, and consider the preminimal symmetric operator $\ell_0'$ defined on
\[
\mathcal D_0'=\{y\in\mathcal D:y \text{ has compact support in } (0,\pi/2)\}
\]
by
$\ell_0'(y)=\ell(y)$. $\mathcal D_0'$ is dense in $L^2((0,\pi/2))$ and by Theorem 4.2 in \cite{NZ}, 
$\ell_0'$ is bounded below.  Theorem 4.2 and Corollary 4.1 in \cite{NZ} guarantee that the Friedrichs extension
of $\ell_0'$ coincides with $\tilde\ell$. It is well known (see Definition 3.1 in \cite{NZ} and the subsequent 
comments) that the Friedrichs extension of a densely defined symmetric bounded below operator
is self-adjoint and bounded below with the same bound.

Assume now $-1/2<\beta<0\le\alpha$ so that $V_{\mu,\alpha}$ is a principal solution, but $W_{\widetilde\mu,\beta}$ is not. This also means that $\ell$ is in the limit circle case at $\pi/2$.
Then, by Theorem 4.4 in \cite{NZ}, the operator $\ell_1$ defined in 
\[
\mathcal D_1=\{y\in\mathcal D:[y,W_{\beta,\widetilde\mu}](\pi/2)=0\text{ and } y \text{ is identically } 0 \text{ near } 0\}
\]
by $\ell_1(y)=\ell(y)$, 
is a symmetric operator in $L^2(0,\pi/2)$ which is bounded below, and its Friedrichs extension is 
$\tilde \ell$ as defined in the statement of the proposition.

Finally, when $-1/2<\beta\le\alpha<0$, then both $V_{\mu,\alpha}$ and $W_{\widetilde\mu,\beta}$ are non principal solutions,  and $\ell$ is in the limit circle case at both endpoints. Thus, $\tilde \ell$ is self-adjoint and bounded below by Theorem 5.1 in \cite{NZ}, with the ``separated boundary conditions''
(5.21) and (5.22) with $A_1=B_1=0$ and $A_2=B_2=1$.

It is immediate that the subspace $\mathcal D$ does not depend on $\mu$ or $\widetilde \mu$, and that
if $y\in L^2((0,\pi/2))$, since $\chi$ is bounded, then $\ell y\in L^2((0,\pi/2))$ if and only if
\begin{equation*}
J y:=-y^{\prime\prime}+\left(  \left(  \alpha^{2}-\frac{1}{4}\right)
\cot^{2}t+\left(  \beta^{2}-\frac{1}{4}\right)  \tan^{2}(t)  \right)  y\in L^2((0,\pi/2)).%
\end{equation*}
Thus  $\mathcal D$ does not depend on $\chi$ either.
Let us show that the boundary conditions do not depend on $\mu$, $\widetilde\mu$ nor $\chi$.
Let us focus on the boundary condition at $0$. If $0\le\alpha<1$, then $V_{\mu,\alpha}$ is a principal solution and, by Theorem 4.3 in \cite{NZ} the condition $[y,V_{\mu,\alpha}](0)=0$ is equivalent to 
\[
\lim_{t\to0+}\frac{y(t)}{\widetilde V_{\mu,\alpha}(t)}=0 \quad\iff\quad \left\{
\begin{array}
[c]{l}%
\lim_{t\to0+}\dfrac{y(t)}{t^{- \alpha+1/2}}=0,\text{ if }0<\alpha<1\\
\lim_{t\to0+}\dfrac{y(t)}{-t^{1/2}\log t}=0,\text{ if }\alpha=0,
\end{array}
\right.
\]
and this is independent of $\mu$ and $\chi$.
When $-1/2<\alpha<0$, then we have to proceed differently. The asymptotic expansion 
of $V_{\mu,\alpha}$ in Theorem \ref{BG}, and of its derivative (see the original theorem,
for example in \cite{BG}), guarantee that, calling
\[
V(t)=\frac{V_{\mu_1,\alpha,\chi_1}(t)}{V_{\mu_2,\alpha,\chi_2}(t)},
\]
then $V(t)=1+O(t^2)$ and $V'(t)=O(t)$ as $t\to0+$, where the dependence 
on $\mu_1, \mu_2, \chi_1,\chi_2$ appears only in the remainders.
Assume that $[y,V_{\mu_2,\alpha,\chi_2}](0)=0$. Then
\begin{align*}
&y(t)V_{\mu_1,\alpha,\chi_1}'(t)-y'(t)V_{\mu_1,\alpha,\chi_1}(t)=
-V_{\mu_1,\alpha,\chi_1}^2(t)\left(\frac{y}{V_{\mu_1,\alpha,\chi_1}}\right)'(t)\\
&=-V^2(t)V_{\mu_2,\alpha,\chi_2}^2(t)\left(\frac1{V(t)}\frac{y}{V_{\mu_2,\alpha,\chi_2}}\right)'(t)\\
&=V'(t)V_{\mu_2,\alpha,\chi_2}(t)y(t)-V(t)V_{\mu_2,\alpha,\chi_2}^2(t)\left(\frac{y}{V_{\mu_2,\alpha,\chi_2}}\right)'(t)
\end{align*}
The initial assumption implies that 
\[
\lim_{t\to 0+}V_{\mu_2,\alpha,\chi_2}^2(t)\left(\frac{y}{V_{\mu_2,\alpha,\chi_2}}\right)'(t)=0.
\]
Since $V$ is bounded near $0$ it only remains to show that
$
V'(t)V_{\mu_2,\alpha,\chi_2}(t)y(t)\to 0
$
 as $t\to 0+$. Since 
\[
\left|\left(\frac{y}{V_{\mu_2,\alpha,\chi_2}}\right)'(t)\right|\le Ct^{-2\alpha-1}
\]
with $\alpha<0$, it follows easily that ${y}/{V_{\mu_2,\alpha,\chi_2}}$ is bounded near $0$,
so that 
\[
|V'(t)V_{\mu_2,\alpha,\chi_2}(t)y(t)|\le Ct^{2\alpha+2}\to 0
\]
as $t\to0+$.

The boundary conditions guarantee that all eigenvalues are simple. Indeed, if $0\le \beta\le\alpha$,
then the boundary conditions can be written as (Theorem 4.3 in \cite{NZ}) 
\[
\lim_{t\to0+}\frac{y(t)}{\widetilde V_{\mu,\alpha}(t)}=\lim_{t\to\pi/2-}\frac{y(t)}{\widetilde W_{\widetilde\mu,\beta}(t)}=0.
\]
This shows that if $y$ is an eigenfunction corresponding to $\mu$, then $y$ is a multiple of the principal
solution $V_{\mu,\alpha}$. Thus, the corresponding eigenspace has dimension $1$. The same argument works when $-1/2<\beta<0\le\alpha$ too. If $-1/2<\beta\le\alpha<0$, the the result follows from Theorem 5.3 in \cite{NZ}.

The orthogonality of eigenfunctions corresponding to different eigenvalues, and the separability of 
$L^2((0,\pi/2))$, guarantee that the eigenvalues form a (bounded below) countable subset of $\mathbb R$.

Finally, we know (\cite[page 1544]{DS}) that for $n\geq0$,
\begin{equation}
\mu_{n}=\sup_{H\in S^{\left(  n\right)  }}\inf_{\substack{\left(  u,H\right)
=0\\u\neq0,u\in\mathcal{D}( \tilde\ell) }}\frac{\left(  u,\ell u\right)
}{\left(  u,u\right)  }, \label{eigen}%
\end{equation}
where $S^{(n)}$ denotes the family of all $n$-dimensional subspaces of
$L^{2}\left(  (0,\pi/2)\right)  $. Recall also that $\chi$ is
bounded on $(0,\pi/2)$,
\[
m\leq-\chi\left(  t\right)  \leq M.
\]
The Jacobi operator%
\[
Jv=-u^{\prime\prime}+\left(  \left(  \alpha^{2}-\frac{1}{4}\right)  \cot
^{2}t+\left(  \beta^{2}-\frac{1}{4}\right)  \tan^{2}t\right)  u
\]
on the domain $\mathcal{D}( \tilde \ell)  $ defined as above is self-adjoint, the eigenfunctions are the Jacobi polynomials%
\[
\sin^{\alpha+1/2}t\cos^{\beta+1/2}tP_{n}^{\alpha,\beta}\left(  \cos\left(
2t\right)  \right)  ,\quad n\geq0
\]
with eigenvalues%
\[
\mu_{n}^{J}=\left(  2n+1\right)  ^{2}+2\left(  2n+1\right)  \left(
\alpha+\beta\right)  +2\alpha\beta+\frac{1}{2}%
\]
(see Theorem 4.2.2, page 61, or 4.24.2, page 67, in \cite{Sz}. Also observe that these polynomials satisfy the boundary conditions by Theorem 8.21.12, page 197 in \cite{Sz}),
and therefore comparing
\[
\ell u=Ju-\chi u
\]
with
\[
Ju+mu,\quad Ju+Mu
\]
in \eqref{eigen}, we obtain%
\[
\mu_{n}^{J}+m\leq\mu_{n}\leq\mu_{n}^{J}+M.
\]
\end{proof}

Let us denote the normalized eigenfunctions of $\tilde{\ell}$ by $\{u_{n}%
\}_{n\geq0}$. The self-adjointness of $\tilde{\ell}$ guarantees the orthogonality
of $\{u_{n}\}_{n\geq0}$ and the spectral theorem for a self-adjoint operator
on a Hilbert space gives us that $\{u_{n}\}_{n\geq0}$ is a basis of
$L^{2}((0,\pi/2))$. Concerning the eigenvalues, it may be convenient to
rewrite the expansion in the above proposition in a slightly different form.
When $n$ is big enough, $\mu_{n}>1$ and as $n\rightarrow+\infty$%
\begin{equation}
\sigma_n:=\sqrt{\mu_{n}}=2n+1+\alpha+\beta+O\left(  \frac{1}{n}\right)  .
\label{asymptotic_eigenvalue_one}%
\end{equation}

\subsection{Asymptotics of the eigenfunctions}

For $n\geq0$, let $\{u_{n}\}  $ be the eigenfunctions of $\tilde\ell$ associated
with the eigenvalue $\mu_{n}$, with $L^{2}$ norm equal to $1.$
Once we have got an orthogonal expansion of $\tilde\ell$ we need some estimates for the eigenfunctions to check if they satisfy the properties (P1) to (P5).
By Proposition \ref{boundary conditions}, it follows that there exist
constants $c_{n}$ and $d_{n}$ such that for all $t\in\left(  0,\pi/2\right)
$,
\begin{equation}\label{identity}
u_{n}\left(  t\right)  =c_{n}V_{\mu_{n},\alpha}\left(  t\right)  =d_{n}%
W_{\mu_{n},\beta}\left(  t\right)  .
\end{equation}
This is clearly true for $\alpha<1$ due to the boundary conditions of
Proposition \ref{boundary conditions}, and in the other cases it follows from
the uniqueness of $V_{\mu_{n},\alpha}$ among the solutions of the equation
$\ell u=\mu_{n}u$ which are in $L^{2}\left(  \left(  0,\varepsilon\right)
\right)  $, and of $W_{\mu_{n},\beta}$ in $L^{2}\left(  \pi/2-\varepsilon
,\pi/2\right)  .$

Recall the asymptotic expansions, given  in
Theorem \ref{BG}, for $V_{\mu_{n},\alpha}$ and $W_{\mu_{n},\beta}$ in the intervals $\left(  0,\pi
/4+\varepsilon\right)  $ and $\left(  \pi/4-\varepsilon,\pi/2\right)  )$,
respectively. We will further simplify the expansions in the next lemma. 

\begin{lemma}
\label{V_expansion}We have%
\begin{align*}
V_{\mu_{n},\alpha}\left(  t\right)   &  =\left(  \frac{2}{\pi}\right)
^{1/2}\frac{2^{\alpha}\Gamma\left(  \alpha+1\right)  }{\sigma_{n}^{\alpha
+1/2}}\left(  \cos\left(  \sigma_{n}t-\frac{\alpha\pi}{2}-\frac{\pi}%
{4}\right)  \right. \\
&  \left.  -\left(   \alpha^{2}-1/4  +tX_{0}\left(  t\right)
\right)  \frac{1}{2\sigma_{n}t}\sin\left(  \sigma_{n}t-\frac{\alpha\pi}%
{2}-\frac{\pi}{4}\right)  +O\left( (  \sigma_{n}t)
^{-2}  \right) \right) ,
\end{align*}
uniformly in $\left[  \sigma_{n}^{-1},\pi/4+\varepsilon\right)  $ for $n$
sufficiently big. Similarly,%
\begin{align*}
&W_{\mu_{n},\beta}\left(  t\right)  \\
 &  =\left(  \frac{2}{\pi}\right)
^{1/2}\frac{2^{\beta}\Gamma\left(  \beta+1\right)  }{\sigma_{n}^{\beta+1/2}%
}\left(  \cos\left(  \sigma_{n}\left(  \frac{\pi}{2}-t\right)  -\frac{\beta
\pi}{2}-\frac{\pi}{4}\right)  \right. \\
&  -\left(  \beta^{2}-1/4  +\left(  \frac{\pi}{2}-t\right)
X_{1}\left(  \frac{\pi}{2}-t\right)  \right)  \frac{1}{2\sigma_{n}\left(
\frac{\pi}{2}-t\right)  }\sin\left(  \sigma_{n}\left(  \frac{\pi}{2}-t\right)
-\frac{\beta\pi}{2}-\frac{\pi}{4}\right) \\
&  +\left.  O\left(  \left( \sigma_{n}\left(  \frac{\pi}{2}-t\right)
\right) ^{-2}\right)  \right)  ,
\end{align*}
uniformly in $\left(  \pi/4-\varepsilon,\pi/2-\sigma_{n}^{-1}\right]  $ for
$n$ sufficiently big.
\end{lemma}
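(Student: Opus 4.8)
The plan is to insert the classical large-argument (Hankel) asymptotic expansion of the Bessel functions into the formula supplied by Theorem \ref{BG} and then simplify. Recall that for a fixed order $\nu$ and uniformly for $z\geq1$ one has
\[
J_{\nu}(z)=\sqrt{\frac{2}{\pi z}}\left(\cos\omega_{\nu}-\frac{4\nu^{2}-1}{8z}\sin\omega_{\nu}+O(z^{-2})\right),\qquad \omega_{\nu}:=z-\frac{\nu\pi}{2}-\frac{\pi}{4}.
\]
First I would apply this with $z=\sigma_{n}t$, which satisfies $z\geq1$ throughout $[\sigma_{n}^{-1},\pi/4+\varepsilon)$, to both $J_{\alpha}(\sigma_{n}t)$ and $J_{\alpha+1}(\sigma_{n}t)$. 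For the latter only the leading term is needed, and the phase-shift identity $\cos\omega_{\alpha+1}=\cos(\omega_{\alpha}-\pi/2)=\sin\omega_{\alpha}$ converts its cosine into a sine carrying the very phase $\omega_{\alpha}=\sigma_{n}t-\alpha\pi/2-\pi/4$ appearing in the statement.

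Next I would substitute these two expansions into the formula of Theorem \ref{BG}, factor the common $\sqrt{2/(\pi\sigma_{n}t)}$ out of both Bessel terms, and collect. The cosine part immediately yields the leading $\cos(\sigma_{n}t-\alpha\pi/2-\pi/4)$. There are exactly two sources of sine terms: $-\tfrac{4\alpha^{2}-1}{8\sigma_{n}t}\sin\omega_{\alpha}$ from $J_{\alpha}$, and $-\tfrac{X_{0}(t)}{2\sigma_{n}}\sin\omega_{\alpha}$ coming from the $-\tfrac12 X_{0}(t)J_{\alpha+1}/\sigma_{n}$ contribution. Using $\tfrac{4\alpha^{2}-1}{8}=\tfrac12(\alpha^{2}-1/4)$ these combine to the coefficient $-(\alpha^{2}-1/4+tX_{0}(t))/(2\sigma_{n}t)$, exactly as claimed. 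Finally, multiplying the prefactor $2^{\alpha}\Gamma(\alpha+1)t^{1/2}\sigma_{n}^{-\alpha}$ of Theorem \ref{BG} by the factored $\sqrt{2/(\pi\sigma_{n}t)}$ makes the powers of $t$ cancel, leaving the advertised normalization $(2/\pi)^{1/2}2^{\alpha}\Gamma(\alpha+1)\sigma_{n}^{-\alpha-1/2}$.

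The step demanding the most care is the bookkeeping of the remainders, all of which must be shown to be $O((\sigma_{n}t)^{-2})$ after the prefactor has been extracted, \emph{uniformly} over $[\sigma_{n}^{-1},\pi/4+\varepsilon)$ where $\sigma_{n}t$ ranges from $1$ to order $\sigma_{n}$. Three remainders occur. The $O(z^{-2})$ from the expansion of $J_{\alpha}$ is already of the right size. The subleading part of $J_{\alpha+1}$ contributes $\tfrac{X_{0}(t)}{2\sigma_{n}}\,O((\sigma_{n}t)^{-1})=O(\sigma_{n}^{-2}t^{-1})=O\!\big(t(\sigma_{n}t)^{-2}\big)$, which is controlled because $t$ is bounded above and $X_{0}(t)=\int_{0}^{t}\eta_{0}$ is bounded on $(0,\pi/4+\varepsilon]$ (as $\eta_{0}$ is continuous there). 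The remainder $O(t^{2}\sigma_{n}^{-\alpha-5/2})$ of Theorem \ref{BG} itself, after division by the prefactor $\sim\sigma_{n}^{-\alpha-1/2}$, becomes $O(t^{2}\sigma_{n}^{-2})=O\!\big(t^{4}(\sigma_{n}t)^{-2}\big)$, again $O((\sigma_{n}t)^{-2})$ since $t$ is bounded. The only subtlety is the borderline case $\alpha=0$, in which this last remainder acquires the extra factor $\log(2/\min(1,\sigma_{n}t))$; but on our range $\sigma_{n}t\geq1$ forces $\min(1,\sigma_{n}t)=1$, so the factor reduces to the constant $\log2$ and is harmless.

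The expansion for $W_{\mu_{n},\beta}$ is then obtained by the identical argument under the substitution $t\mapsto\pi/2-t$, with $\alpha$ and $X_{0}$ replaced by $\beta$ and $X_{1}$ and using the second half of Theorem \ref{BG}; I would simply record this symmetry rather than repeat the computation. Note that the statement is phrased entirely in terms of $\sigma_{n}=\sqrt{\mu_{n}}$, so beyond requiring $n$ large enough that $\mu_{n}>1$ (for Theorem \ref{BG} to apply), the refined eigenvalue asymptotics \eqref{asymptotic_eigenvalue_one} are not needed here.
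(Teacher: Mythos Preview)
Your proof is correct and follows exactly the approach indicated by the paper, which simply states that the lemma ``follows directly from Theorem \ref{BG} and the asymptotic expansion of Bessel functions.'' You have supplied the full details of this computation, including the careful bookkeeping of remainders and the observation that the logarithmic correction at $\alpha=0$ is harmless on the range $\sigma_n t\ge 1$.
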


\begin{proof}
It follows directly from Theorem \ref{BG} and the asymptotic expansion of
Bessel functions (see \cite{Watson} page 199).
\end{proof}

For proving the properties (P1) to (P5) for the eigenfunctions $\{u_n\}_{n\geq0}$ we need asymptotic estimates of the constants $c_n$ and $d_n$ in \eqref{identity} for $n$ large. The following lemmas give us the desired expansion of $c_n$ and $d_n.$
\begin{lemma}
\label{ratio}The following estimate holds%
\[
\frac{d_{n}}{c_{n}}=\left(  -1\right)  ^{n}\frac{2^{\alpha-\beta}\Gamma\left(
\alpha+1\right)  }{\Gamma\left(  \beta+1\right)  }\sigma_{n}^{\beta-\alpha
}\left(  1+O\left(  n^{-2}\right)  \right)  ,\text{ as }n\rightarrow+\infty.
\]

\end{lemma}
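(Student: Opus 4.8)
The plan is to read off $d_n/c_n$ from the identity \eqref{identity}. Since \eqref{identity} is an equality of (twice differentiable) solutions on all of $(0,\pi/2)$, it may be differentiated, giving also $c_nV_{\mu_n,\alpha}'=d_nW_{\mu_n,\beta}'$. Hence, evaluating at any fixed point in the common range of validity of the two expansions of Lemma \ref{V_expansion} (take $t=\pi/4$, which lies in both $[\sigma_n^{-1},\pi/4+\varepsilon)$ and $(\pi/4-\varepsilon,\pi/2-\sigma_n^{-1}]$), the combination
\[
\frac{d_n}{c_n}=\frac{V_{\mu_n,\alpha}(\pi/4)-i\sigma_n^{-1}V_{\mu_n,\alpha}'(\pi/4)}{W_{\mu_n,\beta}(\pi/4)-i\sigma_n^{-1}W_{\mu_n,\beta}'(\pi/4)}
\]
holds, and the left-hand side is real since all functions involved are real valued. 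A direct evaluation of $V_{\mu_n,\alpha}(\pi/4)/W_{\mu_n,\beta}(\pi/4)$ would only give an $O(n^{-1})$ error, because the $O(n^{-1})$ term of $\sigma_n$ (see \eqref{asymptotic_eigenvalue_one}) perturbs the phase of the leading cosines; the complex combination is introduced precisely because its modulus is insensitive to that phase.

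Writing the expansion of Lemma \ref{V_expansion} as $V_{\mu_n,\alpha}=A_\alpha(\cos\phi_\alpha-G_\alpha\sin\phi_\alpha+O(\sigma_n^{-2}))$ with $\phi_\alpha(t)=\sigma_nt-\alpha\pi/2-\pi/4$, $A_\alpha=(2/\pi)^{1/2}2^\alpha\Gamma(\alpha+1)\sigma_n^{-\alpha-1/2}$ and $G_\alpha=O((\sigma_nt)^{-1})$ real, and differentiating (the remainder stays $O(\sigma_n^{-2})$ after division by $\sigma_n$ by the derivative form of the estimate in Theorem \ref{BG}), I obtain $\sigma_n^{-1}V_{\mu_n,\alpha}'=A_\alpha(-\sin\phi_\alpha-G_\alpha\cos\phi_\alpha+O(\sigma_n^{-2}))$. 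The oscillatory cross terms then cancel, so that the numerator factorizes as
\[
V_{\mu_n,\alpha}(\pi/4)-i\sigma_n^{-1}V_{\mu_n,\alpha}'(\pi/4)=A_\alpha\,e^{i\phi_\alpha(\pi/4)}\bigl(1+iG_\alpha+O(\sigma_n^{-2})\bigr),
\]
whose modulus is the clean quantity $A_\alpha(1+O(n^{-2}))$. The same computation applied to $W_{\mu_n,\beta}$, with $B_\beta=(2/\pi)^{1/2}2^\beta\Gamma(\beta+1)\sigma_n^{-\beta-1/2}$ and phase $e^{-i\Phi_\beta(\pi/4)}$ where $\Phi_\beta(t)=\sigma_n(\pi/2-t)-\beta\pi/2-\pi/4$ (the minus sign because its expansion is written in the variable $\pi/2-t$), gives the analogous factorization. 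Taking moduli immediately yields
\[
\left|\frac{d_n}{c_n}\right|=\frac{A_\alpha}{B_\beta}(1+O(n^{-2}))=\frac{2^{\alpha-\beta}\Gamma(\alpha+1)}{\Gamma(\beta+1)}\sigma_n^{\beta-\alpha}(1+O(n^{-2})),
\]
which is the modulus of the asserted expression.

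For the sign, keep the explicit phases. The total phase of $d_n/c_n$ is
\[
\phi_\alpha(\pi/4)+\Phi_\beta(\pi/4)=\sigma_n\frac{\pi}{2}-(\alpha+\beta)\frac{\pi}{2}-\frac{\pi}{2}=n\pi+O(n^{-1}),
\]
using \eqref{asymptotic_eigenvalue_one}. Since $d_n/c_n$ is real, its argument is an integer multiple of $\pi$; the remaining factors $1+iG_\alpha$ and $(1-i\widetilde G_\beta)^{-1}$ only rotate the argument by $O(n^{-1})$, so for $n$ large the argument is forced to equal exactly $n\pi$, giving the sign $(-1)^n$. Combining this with the modulus gives the claim.

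I expect the sharpness of the error to be the main obstacle: a naive single-point evaluation of the ratio $V_{\mu_n,\alpha}/W_{\mu_n,\beta}$ degrades to $O(n^{-1})$, and the parity-dependent near-vanishing of the leading cosines at $\pi/4$ makes a direct determination of the sign delicate. Both difficulties are dissolved at once by passing to the complex amplitude $V-i\sigma_n^{-1}V'$, which splits a phase-free modulus (sharp to $O(n^{-2})$ thanks to the cancellation of oscillatory cross terms) from an explicit phase equal to $n\pi+O(n^{-1})$; the reality of $d_n/c_n$ then pins down the sign with no case analysis. The one technical point to verify carefully is that the derivative of the $O((\sigma_nt)^{-2})$ remainder of Lemma \ref{V_expansion}, divided by $\sigma_n$, remains $O(\sigma_n^{-2})$ at $t=\pi/4$, which is exactly the content of the derivative estimate behind Theorem \ref{BG}.
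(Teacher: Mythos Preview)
Your argument is correct and takes a genuinely different route from the paper's. The paper evaluates the plain ratio $V_{\mu_n,\alpha}(t_n)/W_{\mu_n,\beta}(t_n)$ at a parity-dependent point $t_n=\pi/4+c_n/\sigma_n$ chosen so that the sine terms in both expansions are $O(n^{-1})$; then the second-order terms in Lemma \ref{V_expansion} drop to $O(n^{-2})$ and the ratio of the surviving cosines delivers both the magnitude and the sign $(-1)^n$ at once. Your approach instead fixes $t=\pi/4$ and passes to the complex amplitude $V-i\sigma_n^{-1}V'$, whose factorization $A_\alpha e^{i\phi_\alpha}(1+iG_\alpha+O(\sigma_n^{-2}))$ cleanly separates a phase-insensitive modulus from an explicit argument; the reality of $d_n/c_n$ then forces the argument to equal exactly $n\pi$. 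The trade-off is that your method avoids the even/odd case split entirely, but imports the derivative estimate for the remainder from the original source behind Theorem \ref{BG} (the paper does invoke that estimate elsewhere, in the proof of Proposition \ref{boundary conditions}, so this is legitimate). The paper's method is slightly more self-contained in that it uses only the function values provided by Lemma \ref{V_expansion}.
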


\begin{proof}
Let
\[
t_{n}=\left\{
\begin{array}
[c]{ll}%
\dfrac{\pi}{4}+\dfrac{\left(  \alpha-\beta\right)  \pi}{4\sigma_{n}} &
\text{if }n\text{ is even}\\
\\
\dfrac{\pi}{4}+\dfrac{\left(  \alpha-\beta+2\right)  \pi}{4\sigma_{n}} &
\text{if }n\text{ is odd,}%
\end{array}
\right.
\]
and replace $t$ with $t_{n}$ in the asymptotic expansion of $V_{\mu_{n}%
,\alpha}\left(  t\right)  $ in $W_{\mu_{n},\beta}\left(  t\right)  $ in Lemma
\ref{V_expansion}.  By 
\eqref{asymptotic_eigenvalue_one}, if $n$ is even, then%
\begin{align*}
\cos\left(  t_{n}\sigma_{n}-\frac{\alpha}{2}\pi-\frac{1}{4}\pi\right)
&= \cos\left(  \dfrac{\pi}{2}n+O\left(  n^{-1}\right)  \right)\\
\cos\left(  \left(  \frac{\pi}{2}%
-t_{n}\right)  \sigma_{n}-\frac{\beta}{2}\pi-\frac{1}{4}\pi\right)
& =\cos\left(  \dfrac{\pi}{2}n+O\left(  n^{-1}\right)  \right)  .
\end{align*}
and if $n$ is odd then%
\begin{align*}
\cos\left(  t_{n}\sigma_{n}-\frac{\alpha}{2}\pi-\frac{1}{4}\pi\right)   &
 =\cos\left(  \dfrac{\pi}{2}n+O\left(
n^{-1}\right)  +\frac{1}{2}\pi\right) \\
\cos\left(  \left(  \frac{\pi}{2}-t_{n}\right)  \sigma_{n}-\frac{\beta}{2}%
\pi-\frac{1}{4}\pi\right)   & 
=\cos\left(  \dfrac{\pi}{2}n+O\left(  n^{-1}\right)  -\frac{1}{2}\pi\right).
\end{align*}
In particular, $\sin(t_n\sigma_n-\alpha\pi/2-\pi/4)=O(n^{-1})$
and $\sin((\pi/2-t_n)\sigma_n-\beta\pi/2-\pi/4)=O(n^{-1})$, so that
\begin{align*}
V_{\mu_{n},\alpha}\left(  t_{n}\right)   &  =\frac{2^{\alpha+1/2}\Gamma\left(
\alpha+1\right)  }{\pi^{1/2}\sigma_{n}^{\alpha+1/2}}\left(  \cos\left(
\sigma_{n}t_{n}-\frac{\alpha\pi}{2}-\frac{\pi}{4}\right)  \right. \\
&  \left.  -\left(  \left(  \alpha^{2}-1/4\right)  +t_{n}X_{\alpha}\left(
t_{n}\right)  \right)  \frac{1}{2\sigma_{n}t_{n}}\sin\left(  \sigma_{n}%
t_{n}-\frac{\alpha\pi}{2}-\frac{\pi}{4}\right)  +O\left(
\sigma_{n} ^{-2}\right)  \right) \\
&  =\frac{2^{\alpha+1/2}\Gamma\left(  \alpha+1\right)  }{\pi^{1/2}\sigma
_{n}^{\alpha+1/2}}\left(  \cos\left(  \sigma_{n}t_{n}-\frac{\alpha\pi}%
{2}-\frac{\pi}{4}\right)  +O\left( \sigma_{n}
^{-2}\right)  \right)  ,
\end{align*}
and similarly%
\begin{align*}
W_{\mu_{n},\beta}\left(  t_{n}\right)   
 =\frac{2^{\beta+1/2}\Gamma\left(  \beta+1\right)  }{\pi^{1/2}\sigma
_{n}^{\beta+1/2}}\left(  \cos\left(  \sigma_{n}\left(  \frac{\pi}{2}%
-t_{n}\right)  -\frac{\beta\pi}{2}-\frac{\pi}{4}\right)  +O\left( 
\sigma_{n} ^{-2}\right)  \right).
\end{align*}
Thus, by \eqref{identity} and the above computations,%
\begin{align*}
\frac{d_{n}}{c_{n}}&=\frac{V_{\mu_{n},\alpha}\left(  t_{n}\right)  }{W_{\mu
_{n},\beta}\left(  t_{n}\right)  }\\
&=\frac{2^{\alpha-\beta}\Gamma\left(
\alpha+1\right)  }{\Gamma\left(  \beta+1\right)  }\frac{\cos\left(
t_{n}\sigma_{n}-\frac{\alpha}{2}\pi-\frac{1}{4}\pi\right)  }{\cos\left(
\left(  \frac{\pi}{2}-t_{n}\right)  \sigma_{n}-\frac{\beta}{2}\pi-\frac{1}%
{4}\pi\right)  } \sigma_{n}  ^{\beta-\alpha}+O\left(  
\sigma_{n}  ^{\beta-\alpha-2}\right) \\
&=
\left(  -1\right)  ^{n}\frac{2^{\alpha-\beta}\Gamma\left(
\alpha+1\right)  }{\Gamma\left(  \beta+1\right)  } \sigma_{n}
^{\beta-\alpha}\left(  1+O\left( \sigma_{n} ^{-2}\right)
\right).
\end{align*}
\end{proof}

\begin{lemma}
\label{coefficients}The following estimates hold for $n\rightarrow+\infty$%
\begin{align*}
c_{n}  &  =\frac{\sigma_{n}^{\alpha+1/2}}{2^{\alpha-1/2}\Gamma\left(
\alpha+1\right)  }\left(  1+O\left(  \frac{1}{n^{2}}\right)  \right) \\
d_{n}  &  =\left(  -1\right)  ^{n}\frac{\sigma_{n}^{\beta+1/2}}{2^{\beta
-1/2}\Gamma\left(  \beta+1\right)  }\left(  1+O\left(  \frac{1}{n^{2}}\right)
\right)  .
\end{align*}

\end{lemma}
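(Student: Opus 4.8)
The plan is to pin down $c_{n}$ (up to sign, which we fix by requiring $c_{n}>0$) from the normalization $\int_{0}^{\pi/2}u_{n}^{2}\,dt=1$, and then to read off $d_{n}$ from Lemma \ref{ratio}. Using the two representations in \eqref{identity} I would split the integral at the midpoint,
\[
1=\int_{0}^{\pi/2}u_{n}^{2}\,dt=c_{n}^{2}\int_{0}^{\pi/4}V_{\mu_{n},\alpha}^{2}\,dt+d_{n}^{2}\int_{\pi/4}^{\pi/2}W_{\mu_{n},\beta}^{2}\,dt,
\]
so that everything reduces to the asymptotic evaluation of the two half-integrals. Feeding the expansion of Lemma \ref{V_expansion} into the first integral, the leading contribution is $\frac{2}{\pi}\frac{2^{2\alpha}\Gamma(\alpha+1)^{2}}{\sigma_{n}^{2\alpha+1}}\cos^{2}\!\left(\sigma_{n}t-\tfrac{\alpha\pi}{2}-\tfrac{\pi}{4}\right)$, whose average $\tfrac12$ over the interval produces the main term; a parallel computation applies to $W_{\mu_{n},\beta}$ on $[\pi/4,\pi/2]$. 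This yields
\[
\int_{0}^{\pi/4}V_{\mu_{n},\alpha}^{2}\,dt=\frac{2^{2\alpha}\Gamma(\alpha+1)^{2}}{4\sigma_{n}^{2\alpha+1}}\left(1+a_{n}\right),\qquad\int_{\pi/4}^{\pi/2}W_{\mu_{n},\beta}^{2}\,dt=\frac{2^{2\beta}\Gamma(\beta+1)^{2}}{4\sigma_{n}^{2\beta+1}}\left(1+b_{n}\right),
\]
with a priori only $a_{n},b_{n}=O(n^{-1})$.

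The main obstacle is precisely upgrading these errors from $O(n^{-1})$ to $O(n^{-2})$. Two kinds of correction must be controlled. The cross terms coming from the $X_{0}$--correction and from the remainder in the expansion (and their analogues for $W$) are handled by a single integration by parts: since $X_{0}(t)=\int_{0}^{t}\eta_{0}$ is smooth and the trigonometric factors oscillate, each such term is of relative order $O(n^{-2})$; near the endpoints, where the $\frac1t$ factors are large, I would carry out this estimate in the Bessel form of Theorem \ref{BG}, in which $J_{\alpha+1}(\sigma_{n}t)$ stays bounded. The genuinely delicate point is the \emph{non--oscillatory} part of the half-integrals near $0$ and $\pi/2$, where the cosine expansion degenerates. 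Here I would return to Theorem \ref{BG} and use Lommel's identity
\[
\int_{0}^{Z}zJ_{\alpha}(z)^{2}\,dz=\frac{Z^{2}}{2}\left(J_{\alpha}(Z)^{2}-J_{\alpha-1}(Z)J_{\alpha+1}(Z)\right)
\]
together with the second--order asymptotics of $J_{\alpha}$; a short computation of the non--oscillatory part of the bracket shows it equals $\frac{2}{\pi Z}+O(Z^{-3})$, so that the non--oscillatory part of the integral is $\frac{Z}{\pi}+O(Z^{-1})$ \emph{with no $O(1)$ constant}. Consequently $a_{n}$ and $b_{n}$ are, modulo $O(n^{-2})$, purely oscillatory and concentrated at the interior matching point $t=\pi/4$ (the upper limit $Z=\sigma_{n}\pi/4$ in Lommel's formula).

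It then remains to check that these oscillatory contributions cancel in the combination $c_{n}^{2}\int_{0}^{\pi/4}V^{2}+d_{n}^{2}\int_{\pi/4}^{\pi/2}W^{2}$. By \eqref{asymptotic_eigenvalue_one} the boundary phase at $\pi/4$ on the $V$--side is $\sigma_{n}\tfrac{\pi}{2}-\alpha\pi-\tfrac{\pi}{2}=n\pi+(\beta-\alpha)\tfrac{\pi}{2}+O(n^{-1})$, so it carries a factor $(-1)^{n}\sin\!\left((\beta-\alpha)\tfrac{\pi}{2}\right)$, while the $W$--side carries $(-1)^{n}\sin\!\left((\alpha-\beta)\tfrac{\pi}{2}\right)=-(-1)^{n}\sin\!\left((\beta-\alpha)\tfrac{\pi}{2}\right)$; since Lemma \ref{ratio} gives
\[
d_{n}^{2}\,\frac{2^{2\beta}\Gamma(\beta+1)^{2}}{\sigma_{n}^{2\beta+1}}=c_{n}^{2}\,\frac{2^{2\alpha}\Gamma(\alpha+1)^{2}}{\sigma_{n}^{2\alpha+1}}\left(1+O(n^{-2})\right),
\]
the two interior boundary terms are opposite and cancel to the required order. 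One is thus left with
\[
1=c_{n}^{2}\,\frac{2^{2\alpha}\Gamma(\alpha+1)^{2}}{2\sigma_{n}^{2\alpha+1}}\left(1+O(n^{-2})\right),
\]
which gives $c_{n}=\dfrac{\sigma_{n}^{\alpha+1/2}}{2^{\alpha-1/2}\Gamma(\alpha+1)}\left(1+O(n^{-2})\right)$; substituting this into Lemma \ref{ratio} yields the stated formula for $d_{n}$. I expect the confirmation that the non--oscillatory endpoint part carries no $O(1)$ constant (hence no stray $O(n^{-1})$ term) to be the most technical step, as it is what makes the $O(n^{-2})$ precision possible at all.
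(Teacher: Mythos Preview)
Your proposal is correct and follows essentially the same route as the paper: split the normalization integral at $\pi/4$, evaluate each half in the Bessel form of Theorem \ref{BG}, kill the $X_0$--cross term by an integration by parts, apply the Lommel integral (the paper cites the equivalent form $\int_0^{\pi/4} tJ_\alpha^2(\sigma_n t)\,dt=\tfrac{\pi^2}{32}\bigl(J_\alpha'^2+J_\alpha^2\bigr)(\sigma_n\pi/4)+O(\sigma_n^{-2})$ from Lebedev), and observe that the residual $O(n^{-1})$ oscillatory terms at $t=\pi/4$ carry opposite factors $(-1)^n\sin\bigl((\beta-\alpha)\tfrac{\pi}{2}\bigr)$ and $(-1)^n\sin\bigl((\alpha-\beta)\tfrac{\pi}{2}\bigr)$ and thus cancel after weighting by $d_n^2/c_n^2$ via Lemma \ref{ratio}. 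The only cosmetic difference is that the paper works in the Bessel form from the outset rather than switching between Lemma \ref{V_expansion} and Theorem \ref{BG}, which makes the ``no stray $O(1)$ constant'' issue you flag automatic.
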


\begin{proof}
We know that%
\begin{equation}
1=\int_{0}^{\pi/2}\left\vert u_{n}\left(  t\right)  \right\vert ^{2}%
dt=c_{n}^{2}\left(  \int_{0}^{\pi/4}\left\vert V_{\mu_{n},\alpha}\left(
t\right)  \right\vert ^{2}dt+\int_{\pi/4}^{\pi/2}\frac{d_{n}^{2}}{c_{n}^{2}%
}\left\vert W_{\mu_{n},\beta}\left(  t\right)  \right\vert ^{2}dt\right)  .
\label{norm one}%
\end{equation}
Observe that for $n$ big, using Theorem \ref{BG} and the well known boundedness of the function $\sqrt{x}J_\alpha(x)$ for $x>0$ and $\alpha>-1/2$, 
\begin{align}
&  \int_{0}^{\pi/4}\left\vert V_{\mu_{n},\alpha}\left(  t\right)  \right\vert
^{2}dt\label{first half}\\
&=\left(  \frac{2^{\alpha}\Gamma\left(  \alpha+1\right)  }{\sigma
_{n}^{\alpha}}\right)  ^{2}\nonumber\\
&\times  \int_{0}^{\pi/4}\left(  t^{1/2}J_{\alpha}\left(  \sigma_{n}t\right)
-\frac{1}{2}t^{1/2}X_{0}\left(  t\right)  \frac{J_{\alpha+1}\left(  \sigma
_{n}t\right)  }{\sigma_{n}}+O\left(  \frac{t^{2}}{\sigma
_{n} ^{5/2}}\right)  \right) ^{2}dt\nonumber\\
&  =\left(  \frac{2^{\alpha}\Gamma\left(  \alpha+1\right)  }{\sigma
_{n}^{\alpha}}\right)  ^{2}\int_{0}^{\pi/4}tJ_{\alpha}^{2}\left(  \sigma
_{n}t\right)  dt\nonumber\\
&-\left(  \frac{2^{\alpha}\Gamma\left(  \alpha+1\right)
}{ \sigma_{n} ^{\alpha}}\right)  ^{2}\frac{1}{\sigma_{n}%
}\int_{ \sigma_{n}  ^{-1}}^{\pi/4}X_{0}\left(  t\right)
J_{\alpha+1}\left(  \sigma_{n}t\right)  J_{\alpha}\left(  \sigma_{n}t\right)
tdt  +O\left(  \frac{1}{\sigma_{n}^{2\alpha+3}}\right). \nonumber
\end{align}
Notice that%
\[
\int_{ \sigma_{n}  ^{-1}}^{\pi/4}X_{0}\left(  t\right)
J_{\alpha+1}\left(  \sigma_{n}t\right)  J_{\alpha}\left(  \sigma_{n}t\right)
tdt=O\left(  \sigma_{n} ^{-2}\right)  .
\]
Indeed, by the asymptotic expansion of Bessel functions (see \cite[page 199]{Watson})%
\begin{align*}
&  \int_{  \sigma_{n}  ^{-1}}^{\pi/4}X_{0}\left(  t\right)
J_{\alpha+1}\left(  \sigma_{n}t\right)  J_{\alpha}\left(  \sigma_{n}t\right)
tdt\\
&  =\int_{  \sigma_{n}  ^{-1}}^{\pi/4}X_{0}\left(  t\right)
\left(  \frac{2}{\pi\sigma_{n}t}\right)  \left(  \sin\left(  \sigma_{n}%
t-\frac{\alpha\pi}{2}-\frac{\pi}{4}\right)  +O\left(  (\sigma
_{n}t) ^{-1}\right)  \right) \\
&  \left(  \cos\left(  \sigma_{n}t-\frac{\alpha\pi}{2}-\frac{\pi}{4}\right)
+O\left(  (\sigma_{n}t) ^{-1}\right)  \right)  tdt\\
&  =\int_{  \sigma_{n} ^{-1}}^{\pi/4}X_{0}\left(  t\right)
\left(  \frac{2}{\pi\sigma_{n}t}\right)  \left(  -\frac{1}{2}\cos\left(
2\sigma_{n}t-\alpha\pi\right)  +O\left(  \left\vert \sigma_{n}t\right\vert
^{-1}\right)  \right)  tdt.
\end{align*}
Since $X_0(t)/t\in\mathcal C^2(-\pi/2,\pi/2)$, integration by parts gives that the above integral is $O\left(
\sigma_{n}^{-2}\right)$.
Thus, by the above, by formula (5.14.5) in \cite{L}, and by the asymptotic expansions of 
$J_{\alpha}$ and $J_\alpha'$ (see \cite[page 364]{AS})
\begin{align*}
&  \int_{0}^{\pi/4}\left\vert V_{\mu_{n},\alpha}\left(  t\right)  \right\vert
^{2}dt\\
& =\frac{2^{2\alpha}\Gamma^{2}\left(  \alpha+1\right)  }{\sigma_{n}^{2\alpha
}}\frac{\pi^{2}}{32}\left(  J_{\alpha}^{\prime2}\left(  \sigma_{n}\frac{\pi
}{4}\right)  +J_{\alpha}^{2}\left(  \sigma_{n}\frac{\pi}{4}\right)  \right)
+O\left(  \frac{1}{\sigma_{n}^{2\alpha+3}}\right) \nonumber\\
&  =\frac{2^{2\alpha-2}\Gamma^{2}\left(  \alpha+1\right)  }{\sigma
_{n}^{2\alpha+1}}\left(1+\frac{2(-1)^n}{\pi\sigma_n}\sin\left((\beta-\alpha)\frac\pi 2\right)\right)+O\left(  \frac{1}{\sigma_{n}^{2\alpha+3}}\right)  ,\nonumber
\end{align*}

Similarly,%
\begin{align}
\label{second half}
&\int_{\pi/4}^{\pi/2}\left\vert W_{\mu_{n},\beta}\left(  t\right)  \right\vert
^{2}dt\\
&=\frac{2^{2\beta-2}\Gamma^{2}\left(  \beta+1\right)  }{\sigma
_{n}^{2\beta+1}}\left(1+\frac{2(-1)^n}{\pi\sigma_n}\sin\left((\alpha-\beta)\frac\pi 2\right)\right)+O\left(  \frac{1}{\sigma_{n}^{2\beta+3}}\right)  .\nonumber
\end{align}
Plugging \eqref{first half} and \eqref{second half} in \eqref{norm one} along
with Lemma \ref{ratio} allows us to deduce the expansion of $c_{n}$, and again
Lemma \ref{ratio} gives $d_{n}$.
\end{proof}

\begin{lemma}
\label{U_expansion_Bessel}Uniformly in $t\in\left(  0,\pi/4+\varepsilon
\right)  $ and in sufficiently large $n$,
\begin{align*}
u_{n}(t)&=\sqrt{2}\left(  \left(  \sigma_{n}t\right)  ^{1/2}J_{\alpha}\left(
\sigma_{n}t\right)  -\frac{1}{2}X_{0}\left(  t\right)  \left(  \sigma
_{n}t\right)  ^{1/2}\frac{J_{\alpha+1}\left(  \sigma_{n}t\right)  }{\sigma
_{n}}\right)\\
&  +O\left(  \frac{t^{2}\min\left(  1,\sigma_{n}t\right)
^{\alpha+5/2}}{\sigma_{n}^{2}}\right).
\end{align*}
When $\alpha=0$ the remainder has to be multiplied by $\log(2/\min(1,\sigma_nt))$.
Uniformly in $t\in\left(  \pi/4-\varepsilon,\pi/2\right)  $, and in sufficiently large $n$,
\begin{align*}
u_{n}(t) &  =\left(  -1\right)  ^{n}\sqrt{2}\left(  \left(  \sigma_{n}\left(
\pi/2-t\right)  \right)  ^{1/2}J_{\beta}\left(  \sigma_{n}\left(
\pi/2-t\right)  \right)  \right.  \\
&  \left.  -\frac{1}{2}X_{1}\left(  \pi/2-t\right)  \left(  \sigma_{n}\left(
\pi/2-t\right)  \right)  ^{1/2}\frac{J_{\beta+1}\left(  \sigma_{n}\left(
\pi/2-t\right)  \right)  }{\sigma_{n}}\right)  \\
&  +O\left(  \frac{\left(  \pi/2-t\right)  ^{2}\min\left(  1,\sigma_{n}\left(
\pi/2-t\right)  \right)  ^{\beta+5/2}}{\sigma_{n}^{2}}\right)  .
\end{align*}
When $\beta=0$ the remainder has to be multiplied by $\log(2/\min(1,\sigma_n(\pi/2-t))$.
In particular, the eigenfunctions $u_{n}$ are uniformly bounded on $\left(
0,\pi/2\right)  .$
\end{lemma}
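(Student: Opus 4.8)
The plan is to read $u_n$ off the representation \eqref{identity}, namely $u_n(t)=c_nV_{\mu_n,\alpha}(t)=d_nW_{\mu_n,\beta}(t)$, feeding in the sharp asymptotics of $V_{\mu_n,\alpha}$ and $W_{\mu_n,\beta}$ from Theorem \ref{BG} together with the values of the normalizing constants $c_n,d_n$ from Lemma \ref{coefficients}. Near $0$ I would work with $c_nV_{\mu_n,\alpha}$: writing $t^{1/2}=\sigma_n^{-1/2}(\sigma_nt)^{1/2}$ and recalling $\sigma_n=\sqrt{\mu_n}$ from \eqref{asymptotic_eigenvalue_one}, the product of the leading constants is
\[
c_n\,\frac{2^{\alpha}\Gamma(\alpha+1)\,t^{1/2}}{\sigma_n^{\alpha}}=\sqrt2\,(\sigma_nt)^{1/2}\bigl(1+O(n^{-2})\bigr),
\]
since the powers of $2$, of $\sigma_n$, and the factors $\Gamma(\alpha+1)$ cancel exactly. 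This already isolates the announced main term $\sqrt2\bigl((\sigma_nt)^{1/2}J_\alpha(\sigma_nt)-\tfrac12X_0(t)(\sigma_nt)^{1/2}J_{\alpha+1}(\sigma_nt)/\sigma_n\bigr)$; the expansion near $\pi/2$ is obtained identically from $d_nW_{\mu_n,\beta}$, the factor $(-1)^n$ in $d_n$ (Lemma \ref{coefficients}) producing the sign $(-1)^n$ in the statement.

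The error bookkeeping is the substantive part. Multiplying the remainder of Theorem \ref{BG}, which is bounded by $C\,t^2\min(1,\sigma_nt)^{\alpha+5/2}\sigma_n^{-\alpha-5/2}$, by $c_n\sim\sigma_n^{\alpha+1/2}$ produces exactly
\[
O\!\left(\frac{t^2\min(1,\sigma_nt)^{\alpha+5/2}}{\sigma_n^{2}}\right),
\]
the stated remainder, and in the case $\alpha=0$ the extra factor $\log(2/\min(1,\sigma_nt))$ is carried through unchanged. The genuinely delicate point is the $O(n^{-2})$ \emph{relative} error carried by $c_n$ when multiplied against the main term: using $\sqrt{x}\,J_\alpha(x)=O(\min(1,x)^{\alpha+1/2})$, $\sqrt{x}\,J_{\alpha+1}(x)=O(\min(1,x)^{\alpha+3/2})$, and $X_0(t)=\int_0^t\eta_0=O(t)$ (recall $\eta_0\in\mathcal C^2$ is even at $0$), this contributes a quantity of size $O\bigl(\sigma_n^{-2}\min(1,\sigma_nt)^{\alpha+1/2}\bigr)$, which must then be compared against the claimed remainder. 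This comparison is the main obstacle: it has to be checked separately in the two regimes $\sigma_nt\le1$ and $\sigma_nt\ge1$, where $\min(1,\sigma_nt)$ takes its two forms, and it is precisely here that the exact cancellation of the $\sigma_n^{-1}$ corrections inside $\int_0^{\pi/4}|V_{\mu_n,\alpha}|^2+\tfrac{d_n^2}{c_n^2}\int_{\pi/4}^{\pi/2}|W_{\mu_n,\beta}|^2$ in \eqref{norm one} (the cancellation, via Lemma \ref{ratio}, that makes the relative error in $c_n$ equal to $O(n^{-2})$ rather than $O(n^{-1})$) is indispensable; with only $O(n^{-1})$ the leading contribution would be $O(\sigma_n^{-1}\min(1,\sigma_nt)^{\alpha+1/2})$ and hopeless.

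Finally, the uniform boundedness of $\{u_n\}$ on all of $(0,\pi/2)$ follows by combining the two expansions, which together cover $(0,\pi/4+\varepsilon)\cup(\pi/4-\varepsilon,\pi/2)=(0,\pi/2)$: on each piece the main term is bounded because $\sqrt{x}\,J_\alpha(x)$ and $\sqrt{x}\,J_{\alpha+1}(x)$ are bounded for $x>0$ (the boundedness already invoked after \eqref{norm one}), while the remainder is $O(n^{-2})$ times a bounded weight and hence uniformly small. The finitely many eigenfunctions with $n$ below the threshold of validity of the expansions are each continuous on $(0,\pi/2)$ and, by \eqref{identity} together with the $t^{\alpha+1/2}$ and $(\pi/2-t)^{\beta+1/2}$ decay of $V_{\mu_n,\alpha}$ and $W_{\mu_n,\beta}$ at the endpoints, individually bounded, so the whole family $\{u_n\}_{n\ge0}$ is uniformly bounded.
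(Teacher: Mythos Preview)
Your approach is exactly the paper's: the proof there is a single sentence invoking $u_n=c_nV_{\mu_n,\alpha}$, Lemma \ref{coefficients} for $c_n$, and Theorem \ref{BG} for $V_{\mu_n,\alpha}$, with the right half handled symmetrically via $d_nW_{\mu_n,\beta}$. Your detailed bookkeeping of the constants and of the Theorem \ref{BG} remainder is correct and reproduces the stated main term and error.

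One caveat on your ``main obstacle'' paragraph. You assert that the $O(n^{-2})$ relative error in $c_n$ multiplied against the main term, namely a contribution of size $O\bigl(\sigma_n^{-2}\min(1,\sigma_nt)^{\alpha+1/2}\bigr)$, can be absorbed into the stated remainder $O\bigl(t^{2}\min(1,\sigma_nt)^{\alpha+5/2}/\sigma_n^{2}\bigr)$ once one checks the two regimes. In fact that comparison fails near $t\sim\sigma_n^{-1}$: for $\sigma_nt\ge1$ the quotient is $t^{-2}$, and for $\sigma_nt\le1$ it is $(t^{2}(\sigma_nt)^{2})^{-1}$, both unbounded as $t\to0$. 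So the $O(n^{-2})$ cancellation in Lemma \ref{coefficients}, while genuine, does not by itself force this extra term under the remainder displayed in the lemma. The paper's one-line proof simply does not raise this point; the extra $O\bigl(\sigma_n^{-2}\min(1,\sigma_nt)^{\alpha+1/2}\bigr)$ is harmless in every subsequent use (it is $O((\sigma_nt)^{-2})$ in Lemma \ref{U_expansion} and $O((nt)^{\alpha+1/2}n^{-2})$ in Lemma \ref{P4}), so nothing downstream is affected. Your diagnosis that $O(n^{-2})$ rather than $O(n^{-1})$ is what matters is therefore overstated for \emph{this} lemma as written, though your instinct that the sharp constant in $c_n$ is important elsewhere is sound.
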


\begin{proof}
The expansion in the left subinterval follows from the identity $u_{n}%
(t)=c_{n}V_{\mu_{n},\alpha}\left(  t\right)  $, along with the expansions of
$c_{n}$ in Lemma \ref{coefficients} and of $V_{\mu_{n},\alpha}\left(
t\right)  $ from Theorem \ref{BG}, and similarly for the right subinterval.
\end{proof}

We can now write the expansion for $u_{n}$ away from the endpoints in terms of sines and cosines.

\begin{lemma}
\label{U_expansion}We have%
\begin{align*}
u_{n}\left(  t\right)  &=\frac{2}{\sqrt{\pi}}\cos\left(  \sigma
_{n}t-\frac{\alpha\pi}{2}-\frac{\pi}{4}\right)\\
&  -\frac 2{\sqrt\pi}\left(    \alpha
^{2}-1/4  +tX_{0}\left(  t\right)  \right)  \frac{1}{2\sigma_{n}t}%
\sin\left(  \sigma_{n}t-\frac{\alpha\pi}{2}-\frac{\pi}{4}\right)  +O\left(
(\sigma_{n}t) ^{-2}\right)  ,
\end{align*}
uniformly for $t\in\left[  \sigma_{n}^{-1},\pi/4+\varepsilon\right)  $ and $n$
sufficiently big, and%
\begin{align*}
u_{n}\left(  t\right)   &  =\left(  -1\right)  ^{n}\frac{2}{\sqrt{\pi}}
\cos\left(  \sigma_{n}\left(  \frac{\pi}{2}-t\right)  -\frac{\beta\pi}%
{2}-\frac{\pi}{4}\right)  \\
&  -\left(  -1\right)  ^{n}\frac{2}{\sqrt{\pi}}\left(   \beta^{2}-1/4  +\left(  \frac{\pi}%
{2}-t\right)  X_{1}\left(  \frac{\pi}{2}-t\right)  \right)  \frac{1}%
{2\sigma_{n}\left(  \frac{\pi}{2}-t\right)  }\\
&\quad\times\sin\left(  \sigma_{n}\left(
\frac{\pi}{2}-t\right)  -\frac{\beta\pi}{2}-\frac{\pi}{4}\right)
  +O\left(  \left( \sigma_{n}\left(  \frac{\pi}{2}-t\right)  \right)
^{-2}\right)  ,
\end{align*}
uniformly for $t\in\left(  \pi/4-\varepsilon,\pi/2-\sigma_{n}^{-1}\right]  $
and $n$ sufficiently big.
\end{lemma}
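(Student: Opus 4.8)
The plan is to read both expansions straight off the identity \eqref{identity}, namely $u_{n}=c_{n}V_{\mu_{n},\alpha}$ and $u_{n}=d_{n}W_{\mu_{n},\beta}$, by feeding in the expansion of $V_{\mu_{n},\alpha}$ (resp. $W_{\mu_{n},\beta}$) from Lemma \ref{V_expansion} together with the asymptotics of $c_{n}$ (resp. $d_{n}$) from Lemma \ref{coefficients}. For the left-hand interval I would multiply the bracketed expansion of $V_{\mu_{n},\alpha}$ by the prefactor $c_{n}=\sigma_{n}^{\alpha+1/2}(2^{\alpha-1/2}\Gamma(\alpha+1))^{-1}(1+O(n^{-2}))$. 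The factors $\sigma_{n}^{\alpha+1/2}$, $\Gamma(\alpha+1)$ and the powers of $2$ telescope: the product of the explicit prefactor of $V_{\mu_{n},\alpha}$, namely $(2/\pi)^{1/2}2^{\alpha}\Gamma(\alpha+1)\sigma_{n}^{-\alpha-1/2}$, with the leading part of $c_{n}$ collapses to the single constant $2/\sqrt{\pi}$. This already reproduces the main cosine term and the $(\alpha^{2}-1/4+tX_{0}(t))$ sine term verbatim.

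The only real work is bookkeeping the remainders and confirming that they all fit inside $O((\sigma_{n}t)^{-2})$. There are three sources. First, the remainder $O((\sigma_{n}t)^{-2})$ already present in Lemma \ref{V_expansion} is carried through unchanged after multiplying by the bounded prefactor. Second, the factor $(1+O(n^{-2}))$ in $c_{n}$ acts on the cosine term, producing an extra error of size $O(n^{-2})$; here I would use that $t$ is bounded above by $\pi/4+\varepsilon$, so that $\sigma_{n}^{-2}=t^{2}(\sigma_{n}t)^{-2}=O((\sigma_{n}t)^{-2})$ uniformly on the whole interval $[\sigma_{n}^{-1},\pi/4+\varepsilon)$, which absorbs this term. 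Third, the $(1+O(n^{-2}))$ factor acting on the sine term and on the pre-existing remainder produces errors of strictly smaller order, of size $O(\sigma_{n}^{-3}t^{-1})$ and $O(n^{-2}(\sigma_{n}t)^{-2})$ respectively, both plainly dominated by the target remainder since $t$ is bounded and $\sigma_{n}\to+\infty$.

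The expansion in the right-hand interval follows by the identical computation applied to $u_{n}=d_{n}W_{\mu_{n},\beta}$, with $\alpha$ replaced by $\beta$, $t$ replaced by $\pi/2-t$, and $X_{0}$ by $X_{1}$; the only new feature is the sign $(-1)^{n}$, which is exactly the sign carried by $d_{n}$ in Lemma \ref{coefficients}, so it appears in front of both the cosine and the sine terms as stated. The extra logarithmic factor in the $\alpha=0$ (resp. $\beta=0$) case is inherited directly from the corresponding remainder in Lemma \ref{V_expansion} and Theorem \ref{BG} and needs no separate argument. I expect the main, and indeed only, subtlety to be the verification in the second paragraph that the absolute error $O(n^{-2})$ coming from the normalization constant is controlled by the $t$-dependent target remainder $O((\sigma_{n}t)^{-2})$; this hinges precisely on $t$ staying bounded above and uses nothing about the behaviour of $t$ near $0$.
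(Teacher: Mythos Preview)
Your proposal is correct and follows exactly the same approach as the paper, which simply says ``one only has to use the expansions of $V_{\mu_{n},\alpha}(t)$ and $c_{n}$ in Lemma~\ref{V_expansion} and Lemma~\ref{coefficients}.'' Your careful bookkeeping of the three remainder sources, and in particular your observation that $O(n^{-2})=O((\sigma_n t)^{-2})$ because $t$ is bounded above, is precisely the content hidden behind that one-line proof.
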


\begin{proof}
One only has to use the expansions of $V_{\mu_{n},\alpha}\left(  t\right)  $
and $c_{n}$ in Lemma \ref{V_expansion} and Lemma \ref{coefficients}. 
%
\end{proof}

\subsection{More asymptotics}
In this section will prove second order asymptotics for the eigenvalues $\{\sigma_n\}.$ We prove it along the same lines as in Theorem $2_I$ in \cite{G2}.
\begin{lemma}
We have%
\[
X_{0}\left(  \frac{\pi}{4}\right)  +X_{1}\left(  \frac{\pi}{4}\right)
=\left(  \alpha^{2}+\beta^{2}-\frac{1}{2}\right)  \left(  \frac{\pi}{2}%
-\frac{4}{\pi}\right)  +\int_{0}^{\frac{\pi}{2}}\chi\left(  t\right)  dt.
\]

\end{lemma}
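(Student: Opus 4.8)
The plan is to reduce the identity to an explicit elementary computation, exploiting that $X_0(\pi/4)+X_1(\pi/4)=\int_0^{\pi/4}\bigl(\eta_0(s)+\eta_1(s)\bigr)\,ds$. First I would add the two integrands. Since $(\alpha^2-1/4)+(\beta^2-1/4)=\alpha^2+\beta^2-1/2$, the symmetric roles played by $\alpha$ and $\beta$ in $\eta_0$ and $\eta_1$ collapse the rational--trigonometric parts into a single coefficient:
\[
\eta_0(t)+\eta_1(t)=-\left(\alpha^2+\beta^2-\tfrac12\right)\left(\cot^2 t-\tfrac1{t^2}+\tan^2 t\right)+\chi(t)+\chi\!\left(\tfrac\pi2-t\right).
\]

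Then I would split the integral of this expression over $(0,\pi/4)$ into a $\chi$-part and a trigonometric part. For the $\chi$-part, the change of variables $u=\pi/2-t$ turns $\int_0^{\pi/4}\chi(\pi/2-t)\,dt$ into $\int_{\pi/4}^{\pi/2}\chi(u)\,du$, so that together with $\int_0^{\pi/4}\chi(t)\,dt$ it reconstitutes exactly $\int_0^{\pi/2}\chi(t)\,dt$, matching the last term in the claim. Note that only the integrability of $\chi$ on the whole interval is used here, not its evenness about $\pi/2$.

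For the trigonometric part I would use the antiderivatives $\int\cot^2 t\,dt=-\cot t-t$, $\int\tan^2 t\,dt=\tan t-t$, and $\int t^{-2}\,dt=-t^{-1}$, giving
\[
\int\left(\cot^2 t-\tfrac1{t^2}+\tan^2 t\right)dt=\tan t-\cot t+\tfrac1t-2t+\mathrm{const}.
\]
Evaluating at $t=\pi/4$ yields $4/\pi-\pi/2$ (the $\tan$ and $\cot$ terms cancel). The only delicate point is the lower endpoint: I must check that $\tan t-\cot t+1/t-2t\to0$ as $t\to0^+$, which is the one spot requiring genuine care. It follows from the Laurent expansion $\cot t=1/t-t/3+O(t^3)$, so that $1/t-\cot t\to0$ while $\tan t$ and $2t$ vanish. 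Hence the trigonometric integral equals $4/\pi-\pi/2$, and multiplying by $-(\alpha^2+\beta^2-1/2)$ produces the factor $(\alpha^2+\beta^2-1/2)(\pi/2-4/\pi)$. Adding the two parts gives the stated identity. The only true obstacle is the cancellation of the singularities at $0$; everything else is bookkeeping.
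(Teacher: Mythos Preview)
Your proof is correct and follows essentially the same approach as the paper: both add $\eta_0+\eta_1$, split off the $\chi$ part via the substitution $u=\pi/2-t$, and evaluate the remaining trigonometric integral by taking an explicit antiderivative and checking the cancellation of the $\cot t$ and $1/t$ singularities at $0$. The paper presents the antiderivative computation via the identities $\cot^2 t+1=\csc^2 t$ and $\tan^2 t+1=\sec^2 t$, but this is a cosmetic difference from your direct use of $\int\cot^2 t\,dt=-\cot t-t$ and $\int\tan^2 t\,dt=\tan t-t$.
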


\begin{proof}
This is a simple exercise. Recall (Theorem \ref{BG}) that $X_i(t)=\int_0^t \eta_i(s)ds$, $i=0,1$.
Using the expression of $\eta_i$ it is enough to look at
\begin{align*}
&  \int_{0}^{\frac{\pi}{4}}\left(  \eta_{0}\left(  t\right)  +\eta_{1}\left(
t\right)  \right)  dt\\
  =&\int_{0}^{\frac{\pi}{4}}\left(  -\left(  \alpha^{2}+\beta^{2}-\frac{1}%
{2}\right)  \left(  \cot^{2}t-\frac{1}{t^{2}}+\tan^{2}t\right)  +\chi\left(
t\right)  +\chi\left(  \frac{\pi}{2}-t\right)  \right)  dt\\
 =&-\left(  \alpha^{2}+\beta^{2}-\frac{1}{2}\right)  \lim_{\varepsilon
\rightarrow0^{+}}\int_{\varepsilon}^{\frac{\pi}{4}}\left(  \left(  \cot
^{2}t+1\right)  -\frac{1}{t^{2}}+\left(  \tan^{2}t+1\right)  -2\right)
dt\\
&+\int_{0}^{\frac{\pi}{2}}\chi\left(  t\right)  dt\\
=&-\left(  \alpha^{2}+\beta^{2}-\frac{1}{2}\right)  \lim_{\varepsilon
\rightarrow0^{+}}\left[  -\cot t+\frac{1}{t}+\tan t-2t\right]  _{\varepsilon
}^{\frac{\pi}{4}}+\int_{0}^{\frac{\pi}{2}}\chi\left(  t\right)  dt\\
  =&\left(  \alpha^{2}+\beta^{2}-\frac{1}{2}\right)  \left(  \frac{\pi}%
{2}-\frac{4}{\pi}\right)  +\int_{0}^{\frac{\pi}{2}}\chi\left(  t\right)  dt.
\end{align*}

\end{proof}

We prove our main estimates now.
\begin{lemma}
\label{asymptotic eigenvalue two}For $n\rightarrow+\infty$%
\[
{\sigma_{n}}=2n+1+\alpha+\beta-\frac{\Theta}{4n}+O\left(  \frac{1}{n^{2}%
}\right)  ,
\]
where%
\[
\Theta=\alpha^{2}+\beta^{2}-1/2+\frac{2}{\pi}\int_{0}^{\frac{\pi}{2}}%
\chi\left(  t\right)  dt
\]

\end{lemma}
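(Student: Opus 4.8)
The plan is to read the eigenvalue condition off the linear dependence of the two distinguished solutions and then expand it. By \eqref{identity} the principal solution $V_{\mu_n,\alpha}$ at $0$ and the principal solution $W_{\mu_n,\beta}$ at $\pi/2$ are proportional, so their Wronskian
\[
\mathcal W(t)=V_{\mu_n,\alpha}(t)W_{\mu_n,\beta}'(t)-V_{\mu_n,\alpha}'(t)W_{\mu_n,\beta}(t),
\]
which is constant in $t$ because both functions solve $\ell u=\mu_n u$ (an equation with no first order term), vanishes identically. (Conversely, for a generic $\mu$ the two solutions are independent and $\mathcal W\neq0$, so the eigenvalues are exactly the zeros of $\mathcal W$ as a function of $\sigma=\sqrt\mu$; this is what makes $\mathcal W(\pi/4)=0$ a genuine constraint.) I would evaluate $\mathcal W$ at the symmetric point $t=\pi/4$, where $\pi/2-t=\pi/4$, so that Lemma \ref{V_expansion} applies to both functions with the same argument.

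Next I would insert into $\mathcal W(\pi/4)=0$ the expansions of Lemma \ref{V_expansion} together with the corresponding expansions of $V_{\mu_n,\alpha}'$ and $W_{\mu_n,\beta}'$, obtained by differentiating the expansions of Theorem \ref{BG} (whose derivative form is available in \cite{BG}). Writing $\theta=\sigma_n\frac\pi4-\frac{\alpha\pi}2-\frac\pi4$, $\varphi=\sigma_n\frac\pi4-\frac{\beta\pi}2-\frac\pi4$, $p=\alpha^2-\tfrac14+\tfrac\pi4 X_0(\pi/4)$, $q=\beta^2-\tfrac14+\tfrac\pi4 X_1(\pi/4)$, and denoting by $A_V=(2/\pi)^{1/2}2^\alpha\Gamma(\alpha+1)\sigma_n^{-\alpha-1/2}$, $A_W=(2/\pi)^{1/2}2^\beta\Gamma(\beta+1)\sigma_n^{-\beta-1/2}$ the explicit prefactors, the leading terms give $V_{\mu_n,\alpha}(\pi/4)\sim A_V\cos\theta$, $V_{\mu_n,\alpha}'(\pi/4)\sim-A_V\sigma_n\sin\theta$, $W_{\mu_n,\beta}(\pi/4)\sim A_W\cos\varphi$, $W_{\mu_n,\beta}'(\pi/4)\sim A_W\sigma_n\sin\varphi$. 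Carrying one more order and collecting terms, the common factor $A_VA_W$ divides out and the Wronskian condition becomes
\[
\sigma_n\sin(\theta+\varphi)+\tfrac{2}{\pi}(p+q)\cos(\theta+\varphi)+O(\sigma_n^{-1})=0,
\]
with $\theta+\varphi=\tfrac\pi2(\sigma_n-\alpha-\beta-1)$.

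I would then solve this, bootstrapping from the known leading asymptotics \eqref{asymptotic_eigenvalue_one}, which identifies the $n$-th root by $\theta+\varphi=n\pi+\psi$ with $\psi=O(n^{-1})$. Since $\sin(\theta+\varphi)=(-1)^n(\psi+O(n^{-3}))$ and $\cos(\theta+\varphi)=(-1)^n(1+O(n^{-2}))$, the displayed equation reduces to $\sigma_n\psi+\tfrac2\pi(p+q)+O(n^{-1})=0$, whence $\psi=-\tfrac{2(p+q)}{\pi\sigma_n}+O(n^{-2})$. Using $\sigma_n=2n+O(1)$ together with $\psi=\tfrac\pi2(\sigma_n-2n-\alpha-\beta-1)$ gives
\[
\sigma_n=2n+1+\alpha+\beta-\frac{2(p+q)}{\pi^2 n}+O(n^{-2}).
\]
Finally I would substitute the value of $X_0(\pi/4)+X_1(\pi/4)$ from the preceding lemma into $p+q=(\alpha^2+\beta^2-\tfrac12)+\tfrac\pi4\big(X_0(\pi/4)+X_1(\pi/4)\big)$; a short computation collapses this to $p+q=\tfrac{\pi^2}{8}(\alpha^2+\beta^2-\tfrac12)+\tfrac\pi4\int_0^{\pi/2}\chi(t)\,dt$, so that $\tfrac{2(p+q)}{\pi^2}=\tfrac\Theta4$, yielding the claimed formula.

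The hard part will be the bookkeeping in the second step: I must control the derivative expansions of $V_{\mu_n,\alpha}$ and $W_{\mu_n,\beta}$ at $\pi/4$ with remainder $O(\sigma_n^{-1})$ in absolute terms, track the $t$-dependence of the correction coefficients $\alpha^2-\tfrac14+tX_0(t)$ and $\beta^2-\tfrac14+(\tfrac\pi2-t)X_1(\tfrac\pi2-t)$ when differentiating, and check that every cross term in the Wronskian product is genuinely $O(\sigma_n^{-1})$, so that the final error in $\sigma_n$ is $O(n^{-2})$ and not merely $O(n^{-1})$.
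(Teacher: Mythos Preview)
Your proposal is correct and reaches the same formula, but it takes a genuinely different route from the paper. The paper does \emph{not} use the Wronskian; instead it equates the two expressions for $u_n(\pi/4)$ coming from Lemma~\ref{U_expansion} (one via $V_{\mu_n,\alpha}$, one via $W_{\mu_n,\beta}$), obtaining an identity of the form
\[
\cos x+\frac{C}{\sigma_n}\sin x=(-1)^n\Bigl(\cos y'+\frac{D}{\sigma_n}\sin y'\Bigr),
\]
and then manipulates it trigonometrically into $\sin\bigl(\tfrac{x+y}{2}+\theta\bigr)=0$ for a small angle $\theta$. This uses only the function values but relies on the $(-1)^n$ parity already built into Lemma~\ref{U_expansion} (hence on Lemma~\ref{coefficients}), and requires a case split depending on whether $\sin x$, $\sin y'$ are bounded away from zero.

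Your Wronskian argument is cleaner in several respects: it bypasses the normalization constants $c_n,d_n$ and the $(-1)^n$ bookkeeping entirely, the cross terms combine directly into $\sin(\theta+\varphi)$ and $\cos(\theta+\varphi)$ without any case distinction, and the identification of the $n$-th root via $\theta+\varphi=n\pi+\psi$ is immediate from \eqref{asymptotic_eigenvalue_one}. The price you pay is that you need the asymptotic expansions of $V_{\mu_n,\alpha}'$ and $W_{\mu_n,\beta}'$ with $O(\sigma_n^{-1})$ remainders; these are indeed available in \cite{BG} (and are implicit in the proof of Theorem~\ref{BG}), but the paper never states them, so you should make this dependence explicit. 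With that caveat, your approach is a legitimate and arguably more transparent alternative.
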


\begin{proof}
We follow the lines of the proof of Theorem $2_{I}$ in \cite{G2}. Assume
first that
\[
\sin\left(  \sigma_{n}\frac{\pi}{4}-\frac{\alpha\pi}{2}-\frac{\pi}{4}\right)
\text{ and }\sin\left(  \sigma_{n}\frac{\pi}{4}-\frac{\beta\pi}{2}-\frac{\pi
}{4}\right)
\]
are both far from zero, that is $2n\pm\left(  \alpha-\beta\right)  $ is far
from a multiple of $4$. Then replace $t$ with $\pi/4$ in both expansions of
$u_{n}$ in the above Lemma \ref{U_expansion}.
\begin{align}
\frac{\sqrt \pi}{2}u_n\left(\frac{\pi}{4}\right)
=& \cos\left(  \sigma_{n}\frac{\pi}{4}-\frac{\alpha\pi}{2}-\frac{\pi}%
{4}\right) \label{intersection}\\ 
&-\left(   \alpha^{2}-\frac14 +\frac{\pi}{4}X_{0}\left(
\frac{\pi}{4}\right)  \right)  \frac{2}{\sigma_{n}\pi}\sin\left(  \sigma
_{n}\frac{\pi}{4}-\frac{\alpha\pi}{2}-\frac{\pi}{4}\right)
\nonumber\\
&  +\sin\left(  \sigma_{n}\frac{\pi}{4}-\frac{\alpha\pi}{2}-\frac{\pi}%
{4}\right)  O\left(  \left\vert \sigma_{n}\right\vert ^{-2}\right) \nonumber\\
  =&\left(  -1\right)  ^{n} \cos\left(  \sigma_{n}\frac{\pi}{4}%
-\frac{\beta\pi}{2}-\frac{\pi}{4}\right)\nonumber\\
&  -(-1)^n\left(   \beta
^{2}-\frac14 +\frac{\pi}{4}X_{1}\left(  \frac{\pi}{4}\right)  \right)
\frac{2}{\sigma_{n}\pi}\sin\left(  \sigma_{n}\frac{\pi}{4}-\frac{\beta\pi}%
{2}-\frac{\pi}{4}\right)   \nonumber\\
&  +\left(  -1\right)  ^{n}\sin\left(  \sigma_{n}\frac{\pi}{4}-\frac{\beta\pi
}{2}-\frac{\pi}{4}\right)  O\left(  \left\vert \sigma_{n}\right\vert
^{-2}\right)  .\nonumber
\end{align}
If we set%
\begin{align*}
x  &  =\sigma_{n}\frac{\pi}{4}-\frac{\alpha\pi}{2}-\frac{\pi}{4}\\
y  &  =\sigma_{n}\frac{\pi}{4}-\frac{\beta\pi}{2}-\frac{\pi}{4}+\left(
1-\left(  -1\right)  ^{n}\right)  \frac{\pi}{2}\\
C  &  =-\left(   \alpha^{2}-\frac14  +\frac{\pi}{4}X_{0}\left(
\frac{\pi}{4}\right)  \right)  \frac{2}{\pi}+O\left(  \left\vert \sigma
_{n}\right\vert ^{-1}\right) \\
D  &  =-\left(   \beta^{2}-\frac14  +\frac{\pi}{4}X_{1}\left(
\frac{\pi}{4}\right)  \right)  \frac{2}{\pi}+O\left(  \left\vert \sigma
_{n}\right\vert ^{-1}\right) \\
\cos\gamma &  =\frac{C}{\sqrt{C^{2}+D^{2}}}\\
\sin\gamma &  =\frac{D}{\sqrt{C^{2}+D^{2}}},
\end{align*}
(if $C=D=0$ then just set $\gamma=0$).
The above identity can be written as%
\begin{align}
&\cos x+\frac{C}{\sigma_{n}}\sin x\nonumber\\
&\quad=\left(  -1\right)  ^{n}\left(  \cos\left(
y-\left(  1-\left(  -1\right)  ^{n}\right)  \frac{\pi}{2}\right)  +\frac
{D}{\sigma_{n}}\sin\left(  y-\left(  1-\left(  -1\right)  ^{n}\right)
\frac{\pi}{2}\right)  \right) \nonumber\\
&\cos x+\frac{C}{\sigma_{n}}\sin x=\cos y+\frac{D}{\sigma_{n}}\sin y\nonumber\\
&\cos x-\cos y+\frac{\sqrt{C^{2}+D^{2}}}{\sigma_{n}}\left(  \cos\gamma\sin
x-\sin\gamma\sin y\right)  =0 \label{mac}%
\end{align}
By classical trigonometric identities%
\[
\cos\gamma\sin x-\sin\gamma\sin y=\frac{\sqrt{2}}{2}\left(  F\sin\left(
\frac{x+y}{2}\right)  +E\cos\left(  \frac{x+y}{2}\right)  \right)  ,
\]
where we have set%
\begin{align*}
F  &  =\cos\left(  \frac{x-y+2\gamma+\pi/2}{2}\right)  +\sin\left(
\frac{x-y-2\gamma+\pi/2}{2}\right) \\
E  &  =\sin\left(  \frac{x-y-2\gamma+\pi/2}{2}\right)  -\cos\left(
\frac{x-y+2\gamma+\pi/2}{2}\right)  .
\end{align*}
Set%
\[
G=-2\sin\left(  \frac{x-y}{2}\right).
\]
Notice that since $2n-(\alpha-\beta)$ is far from a multiple of $4$,
then $(x-y)/2=(\beta-\alpha)\pi/4-(1-(-1)^n)\pi/4$ cannot be a multiple of $\pi$,
so that $G\neq0$.
Thus \eqref{mac} becomes%
\begin{align*}
\cos\left(  x\right)  -\cos\left(  y\right)  +\frac{\sqrt{C^{2}+D^{2}}}%
{\sigma_{n}}\frac{\sqrt{2}}{2}\left(  F\sin\left(  \frac{x+y}{2}\right)
+E\cos\left(  \frac{x+y}{2}\right)  \right)   & =0\\
\sin\left(  \frac{x+y}{2}\right)  +\frac{\sqrt{2}E\sqrt{C^{2}+D^{2}}}%
{2G\sigma_{n}+\sqrt{2}F\sqrt{C^{2}+D^{2}}}\cos\left(  \frac{x+y}{2}\right)
&  =0.
\end{align*}
Call $\tan\theta=\frac{\sqrt{2}E\sqrt{C^{2}+D^{2}}}{2G\sigma_{n}+\sqrt
{2}F\sqrt{C^{2}+D^{2}}}$, with $\theta\in(-\pi/2,\pi/2)$. Then we can rewrite the above equation as%
\begin{align*}
\frac{1}{\cos\theta}\left(  \cos\theta\sin\left(  \frac{x+y}{2}\right)
+\sin\theta\cos\left(  \frac{x+y}{2}\right)  \right)   &  =0\\
\sin\left(  \frac{x+y}{2}+\theta\right)   &  =0.
\end{align*}
This implies that for some integer $k_{n}$%
\[
\frac{x+y}{2}+\theta=k_{n}\pi,
\]
but since for $\theta\rightarrow0$,%
\[
\theta=\tan\theta+O\left(  \tan\theta^{3}\right)
\]
then%
\begin{align*}
\frac{x+y}{2}  &  =  k_{n}\pi-\tan\theta+O\left(  \tan\theta^{3}\right) \\
\sigma_{n}\frac{\pi}{4}    & =\frac{\left(  \alpha+\beta\right)  \pi}{4}%
+\frac{\pi}{4}-\left(  1-\left(  -1\right)  ^{n}\right)  \frac{\pi}{4}%
+k_{n}\pi\\
&\quad-\frac{\sqrt{2}E\sqrt{C^{2}+D^{2}}}{2G\sigma_{n}+\sqrt{2}F\sqrt
{C^{2}+D^{2}}}+O\left(  \sigma_{n}^{-3}\right) \\
\sigma_{n}  &=\alpha+\beta+1-\left(  1-\left(  -1\right)  ^{n}\right)
+4k_{n}-\frac{\sqrt{2}E\sqrt{C^{2}+D^{2}}}{\pi Gn}+O\left(  n^{-2}\right)  .
\end{align*}
Now observe that, again by trigonometric identities,
\[
\frac{\sqrt{2}E\sqrt{C^{2}+D^{2}}}{\pi G}=-\frac{C+D}{\pi}=\frac{1}{4}\left(
\alpha^{2}+\beta^{2}-1/2+\frac{2}{\pi}\int_{0}^{\frac{\pi}{2}}\chi\left(
t\right)  dt\right)  +O\left(  \sigma_{n} ^{-1}\right)
.
\]
This gives
\[
\sigma_{n}=\alpha+\beta+1-\left(  1-\left(  -1\right)  ^{n}\right)
+4k_{n}-\frac{\Theta}{4n}+O\left(  n^{-2}\right)  ,
\]
which compared with the known asymptotic $\sigma_{n}=\alpha+\beta
+1+2n+O\left(  n^{-1}\right)  $ gives%
\[
k_{n}=\left\{
\begin{array}
[c]{ll}%
n/2 & \text{if }n\text{ even}\\
\left(  n+1\right)  /2 & \text{if }n\text{ odd.}%
\end{array}
\right.
\]
Finally, if we assume that $2n\pm\left(  \alpha-\beta\right)  $ is close to a
multiple of $4$ then both%
\[
\sin\left(  \sigma_{n}\frac{\pi}{4}-\frac{\alpha\pi}{2}-\frac{\pi}{4}\right)
\text{ and }\sin\left(  \sigma_{n}\frac{\pi}{4}-\frac{\beta\pi}{2}-\frac{\pi
}{4}\right)
\]
are close to zero. Then replace $t$ with $\pi/4$ in both expansions of $u_{n}$
in Lemma \ref{U_expansion}, but this time in \eqref{intersection} we
multiply $O\left(  \left\vert \sigma_{n}\right\vert ^{-2}\right)  $ by
$\cos\left(  \sigma_{n}\frac{\pi}{4}-\frac{\alpha\pi}{2}-\frac{\pi}{4}\right)
$ and by $\cos\left(  \sigma_{n}\frac{\pi}{4}-\frac{\beta\pi}{2}-\frac{\pi}%
{4}\right)  $ rather than respectively  by $\sin\left(  \sigma_{n}\frac{\pi}{4}-\frac
{\alpha\pi}{2}-\frac{\pi}{4}\right)  $ and by $\sin\left(  \sigma_{n}\frac
{\pi}{4}-\frac{\beta\pi}{2}-\frac{\pi}{4}\right)  $. That is we write the
identity
\begin{align*}
&  \cos\left(  \sigma_{n}\frac{\pi}{4}-\frac{\alpha\pi}{2}-\frac{\pi}%
{4}\right)  -\left(  \alpha^{2}-\frac14  +\frac{\pi}{4}X_{0}\left(
\frac{\pi}{4}\right)  \right)  \frac{2}{\sigma_{n}\pi}\sin\left(  \sigma
_{n}\frac{\pi}{4}-\frac{\alpha\pi}{2}-\frac{\pi}{4}\right) \\
&  +\cos\left(  \sigma_{n}\frac{\pi}{4}-\frac{\alpha\pi}{2}-\frac{\pi}%
{4}\right)  O\left(  \sigma_{n} ^{-2}\right) \\
  =&\left(  -1\right)  ^{n}  \cos\left(  \sigma_{n}\frac{\pi}{4}%
-\frac{\beta\pi}{2}-\frac{\pi}{4}\right)\\
&  -(-1)^n\left(  \beta
^{2}-\frac14+\frac{\pi}{4}X_{1}\left(  \frac{\pi}{4}\right)  \right)
\frac{2}{\sigma_{n}\pi}\sin\left(  \sigma_{n}\frac{\pi}{4}-\frac{\beta\pi}%
{2}-\frac{\pi}{4}\right)  \\
&  +\left(  -1\right)  ^{n}\cos\left(  \sigma_{n}\frac{\pi}{4}-\frac{\beta\pi
}{2}-\frac{\pi}{4}\right)  O\left(  \sigma_{n}
^{-2}\right)  ,
\end{align*}
and conclude the proof as before.
\end{proof}

Using the second order approximations of the eigenvalues, we have the following asymptotic expansion of $\{u_n\}.$
\begin{lemma}\label{P2}
For $n$ sufficiently big,%
\begin{align*}
u_{n}\left(  t\right)  =&\frac{2}{\sqrt{\pi}}\cos\left(  \left(  2n+\nu\right)
t-\lambda\right)  \\
&-\left(  \frac{ \alpha^{2}-1/4  +tX_{0}\left(
t\right)  -\Theta t^{2}}{4\sqrt{\pi}}\right)  \frac{2}{nt}\sin\left(  \left(
2n+\nu\right)  t-\lambda\right)  +O\left(  (nt)
^{-2}\right)
\end{align*}
uniformly for $t\in\left[  n^{-1},\pi/4+\varepsilon\right)  $, where%
\begin{align*}
\nu &  =1+\alpha+\beta\\
\lambda &  =\frac{\alpha\pi}{2}+\frac{\pi}{4}.
\end{align*}
Similarly,%
\begin{align*}
u_{n}\left(  t\right)   &  =\left(  -1\right)  ^{n}\frac{2}{\sqrt{\pi}}%
\cos\left(  \left(  2n+\nu\right)  \left(  \frac{\pi}{2}-t\right)
-\lambda^{\prime}\right) \\
&  -\frac{\left(  -1\right)  ^{n}}{4\sqrt{\pi}}\left(  \beta
^{2}-1/4  +\left(  \frac{\pi}{2}-t\right)  X_{1}\left(  \frac{\pi}%
{2}-t\right)  -\Theta\left(  \frac{\pi}{2}-t\right)  ^{2}\right) \\
&\quad\times \frac
{2}{n\left(  \pi/2-t\right)  }\sin\left(  \left(  2n+\nu\right)  \left(
\frac{\pi}{2}-t\right)  -\lambda^{\prime}\right)  +O\left(  \left( n\left(  \frac{\pi}{2}-t\right)  \right)
^{-2}\right)  ,
\end{align*}
uniformly for $t\in\left(  \pi/4-\varepsilon,\pi/2-n^{-1}\right]  $, where%
\[
\lambda^{\prime}=\frac{\beta\pi}{2}+\frac{\pi}{4}.
\]

\end{lemma}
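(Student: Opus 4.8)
The plan is to obtain the expansion by feeding the second-order eigenvalue asymptotic of Lemma \ref{asymptotic eigenvalue two} into the first-order expansion of Lemma \ref{U_expansion}. Throughout I would work on the interval $[n^{-1},\pi/4+\varepsilon)$, where two facts are used repeatedly: $t$ is bounded above, and $nt\ge 1$. Together these let me convert every error of the form $t^2/n^2$, $t/n^2$, $1/n^2$ or $1/(n^2t)$ into $O((nt)^{-2})$, since on this interval each of these is bounded by a constant times $1/(n^2t^2)$. Writing $A=(2n+\nu)t-\lambda$ with $\nu=1+\alpha+\beta$ and $\lambda=\alpha\pi/2+\pi/4$, the starting point is the identity
\[
\sigma_n t-\frac{\alpha\pi}{2}-\frac{\pi}{4}=A+\delta,\qquad \delta=\Big(\sigma_n-(2n+\nu)\Big)t=-\frac{\Theta t}{4n}+O\!\left(\frac{t}{n^2}\right),
\]
which follows directly from Lemma \ref{asymptotic eigenvalue two}.

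For the leading cosine in Lemma \ref{U_expansion} I would Taylor-expand in the small angle $\delta$. Since $\delta=O(t/n)$, one has $\cos\delta=1+O(t^2/n^2)$ and $\sin\delta=-\Theta t/(4n)+O(t/n^2)$, so that
\[
\frac{2}{\sqrt\pi}\cos\!\big(\sigma_n t-\tfrac{\alpha\pi}{2}-\tfrac{\pi}{4}\big)=\frac{2}{\sqrt\pi}\cos A+\frac{2}{\sqrt\pi}\,\frac{\Theta t}{4n}\sin A+O\!\big((nt)^{-2}\big).
\]
This is the crucial step: the $\Theta t/(4n)\sin A$ term produced here is exactly what, upon rewriting $\Theta t/(4n)=\Theta t^2/(4nt)$, supplies the $-\Theta t^2$ summand inside the correction coefficient of the statement.

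The first-order correction term of Lemma \ref{U_expansion} is already of size $1/(nt)$, and its prefactor $\alpha^2-1/4+tX_0(t)$ is bounded on the interval (recall $X_0\in\mathcal C^2$), so there only crude replacements are needed. Using $\sigma_n=2n+O(1)$ I would write $1/(2\sigma_n t)=1/(4nt)+O(1/(n^2t))$, and using $\sin(A+\delta)=\sin A+O(t/n)$ I would replace the sine by $\sin A$; the resulting errors are $O(1/(n^2t))$ and $O(1/n^2)$ respectively, hence $O((nt)^{-2})$. This turns that term into
\[
-\frac{\alpha^2-1/4+tX_0(t)}{2\sqrt\pi\,nt}\sin A+O\!\big((nt)^{-2}\big),
\]
while the $O((\sigma_n t)^{-2})$ remainder of Lemma \ref{U_expansion} is $O((nt)^{-2})$ because $\sigma_n\sim 2n$.

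Collecting the two $\sin A$ contributions gives the single coefficient
\[
-\frac{1}{2\sqrt\pi\,nt}\Big(\alpha^2-\tfrac14+tX_0(t)-\Theta t^2\Big)=-\left(\frac{\alpha^2-1/4+tX_0(t)-\Theta t^2}{4\sqrt\pi}\right)\frac{2}{nt},
\]
which is precisely the stated form, completing the expansion on $[n^{-1},\pi/4+\varepsilon)$. The expansion on $(\pi/4-\varepsilon,\pi/2-n^{-1}]$ follows by the identical argument applied to the second expansion in Lemma \ref{U_expansion}, replacing $\alpha$, $X_0$, $t$, $\lambda$ by $\beta$, $X_1$, $\pi/2-t$, $\lambda'$ and carrying the overall $(-1)^n$ factor through; note that $\Theta$ is symmetric in $\alpha$ and $\beta$, so the same constant appears. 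I expect the only delicate point to be the error bookkeeping, that is, systematically checking that each generated remainder collapses to $O((nt)^{-2})$ via the two interval facts above.
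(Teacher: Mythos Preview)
Your proposal is correct and follows essentially the same approach as the paper: start from Lemma~\ref{U_expansion}, substitute the second-order asymptotic $\sigma_n=2n+\nu-\Theta/(4n)+O(n^{-2})$ from Lemma~\ref{asymptotic eigenvalue two}, Taylor-expand the leading cosine to extract the $\Theta t^2$ correction, and absorb all remainders into $O((nt)^{-2})$. Your write-up is in fact more detailed than the paper's on the error bookkeeping, which the paper leaves largely implicit.
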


\begin{proof}
When $t\in\left[  \sigma_{n}^{-1},\pi/4+\varepsilon\right)  $ then by Lemma
\ref{U_expansion} and Lemma \ref{asymptotic eigenvalue two},
\begin{align*}
u_{n}\left(  t\right)    & =  \frac{2}{\sqrt{\pi}} \cos\left(  \sigma
_{n}t-\frac{\alpha\pi}{2}-\frac{\pi}{4}\right)\\
& \quad -\frac{2}{\sqrt{\pi}}\left(   \alpha
^{2}-1/4  +tX_{0}\left(  t\right)  \right)  \frac{1}{2\sigma_{n}t}%
\sin\left(  \sigma_{n}t-\frac{\alpha\pi}{2}-\frac{\pi}{4}\right)  +O\left(
(\sigma_{n}t) ^{-2}\right)  \\
&  =\frac{2}{\sqrt{\pi}} \cos\left(  \left(  2n+\nu-\frac{\Theta}%
{4n}+O\left(  n^{-2}\right)  \right)  t-\lambda\right)\\ 
&  -\frac{2}{\sqrt{\pi}}\frac{
\alpha^{2}-1/4  +tX_{0}\left(  t\right)  }{4}\frac{1}{nt}\sin\left(
\left(  2n+\nu-\frac{\Theta}{4n}+O\left(  n^{-2}\right)  \right)
t-\lambda\right)\\
&\quad  +O\left( ( nt) ^{-2}\right)  \\
&  =\frac{2}{\sqrt{\pi}}\cos\left(  \left(  2n+\nu\right)  t-\lambda\right)\\
&\quad-\left(  \frac{  \alpha^{2}-1/4 +tX_{0}\left(  t\right)  -\Theta
t^{2}}{4\sqrt{\pi}}\right)  \frac{2}{nt}\sin\left(  \left(  2n+\nu\right)
t-\lambda\right)  +O\left(  ( nt) ^{-2}\right)  ,
\end{align*}
and we proceed in a similar way for the interval $\left(  \pi/4-\varepsilon
,\pi/2-\sigma_{n}^{-1}\right]  .$
\end{proof}

\subsection{Differences}

Define $\Delta(u_{n})=u_{n}-u_{n+1}.$ Using the asymptotics of the eigenvalues in the last subsection we prove the following estimate for $\Delta(u_n)$:

\begin{lemma}\label{P3}
\label{DU_expansion_middle}There exist four bounded functions $Z_{j}\left(
t\right)  $, for $j=1,\ldots,4$, such that%
\begin{align*}
\Delta u_{n}\left(  t\right)  &=t\left(  Z_{1}\left(  t\right)  \cos\left(
2nt\right)  +Z_{2}\left(  t\right)  \sin\left(  2nt\right)  \right)\\&  +\frac
{1}{n}\left(  Z_{3}\left(  t\right)  \cos\left(  2nt\right)  +Z_{4}\left(
t\right)  \sin\left(  2nt\right)  \right)  +O\left(  \frac{1}{n^{2}t}\right)
\end{align*}
uniformly for $t\in\left[  n^{-1},\pi/4+\varepsilon\right)  $ and $n$
sufficiently big. A similar expansion also holds in $\left(  \pi/4-\varepsilon
,\pi/2-n^{-1}\right]  .$
\end{lemma}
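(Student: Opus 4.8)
The plan is to difference the two–term expansion of Lemma \ref{P2} on the left subinterval $[n^{-1},\pi/4+\varepsilon)$ and to keep careful track of the remainder (the right subinterval is handled at the end). The advantage of starting from Lemma \ref{P2} rather than Lemma \ref{U_expansion} is that the phase is now the \emph{exact} linear expression $(2n+\nu)t-\lambda$: the $O(n^{-1})$ correction carried by $\sigma_n$ has already been absorbed into the coefficient of the $\frac1{nt}\sin$ term through Lemma \ref{asymptotic eigenvalue two}, so the phases of $u_n$ and $u_{n+1}$ differ by exactly $2t$. For the leading cosine the product–to–sum identity then gives, with no error,
\[
\frac{2}{\sqrt\pi}\Bigl(\cos((2n+\nu)t-\lambda)-\cos((2n+2+\nu)t-\lambda)\Bigr)=\frac{4}{\sqrt\pi}\,\sin t\,\sin((2n+1+\nu)t-\lambda).
\]
Writing $\sin t=t\cdot\frac{\sin t}{t}$ and expanding $\sin((2n+1+\nu)t-\lambda)$ against $\cos(2nt)$ and $\sin(2nt)$ produces exactly $t\bigl(Z_1(t)\cos2nt+Z_2(t)\sin2nt\bigr)$ with $Z_1,Z_2$ bounded, since $\frac{\sin t}{t}$ is bounded on $[0,\pi/4+\varepsilon]$.

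For the $\tfrac1n\sin$ term I would write $\frac1{n+1}=\frac1n-\frac1{n(n+1)}$ and use $\sin A-\sin B=-2\sin t\,\cos((2n+1+\nu)t-\lambda)$. The first piece yields $\frac1n\bigl(Z_3\cos2nt+Z_4\sin2nt\bigr)$, the boundedness of $Z_3,Z_4$ resting on the fact that, although the coefficient of the $\frac1n\sin$ term in Lemma \ref{P2} is only $O(t^{-1})$, its product with $\sin t$ is bounded; the second piece is $O(n^{-2}t^{-1})$ and is absorbed into the error.

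The single delicate point, and the main obstacle, is the difference of the remainders. The bound $O((nt)^{-2})$ of Lemma \ref{P2} is too crude here: differenced naively it gives only $O((nt)^{-2})=O(n^{-2}t^{-2})$, which for $t\sim n^{-1}$ is of size $O(1)$, whereas the target error is $O(n^{-2}t^{-1})$. I would resolve this by opening up the remainder rather than treating it as a black box. Tracing it back through Lemma \ref{U_expansion_Bessel}, the error of the Bessel approximation itself is $O(t^2\sigma_n^{-2})$, whose favourable $t$–dependence makes it harmless: since $t^3$ is bounded on the interval, $O(t^2n^{-2})\le Cn^{-2}t^{-1}$, so this part differences trivially. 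What remains is the tail of the large–argument expansion of $J_\alpha(\sigma_n t)$ beyond the two retained terms; this tail has the form $g(\sigma_n t)$ for a function $g$ with $g(z)=O(z^{-2})$ and, crucially, $g'(z)=O(z^{-2})$, the asymptotic expansions of Bessel functions and of their derivatives being compatible (see \cite{Watson}). This derivative bound is the extra input that the naive estimate ignores.

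With it the tail difference is controlled by the fundamental theorem of calculus,
\[
g(\sigma_n t)-g(\sigma_{n+1}t)=-\int_{\sigma_n t}^{\sigma_{n+1}t}g'(z)\,dz,
\]
and since $\sigma_{n+1}-\sigma_n=2+O(n^{-2})$ by Lemma \ref{asymptotic eigenvalue two}, the interval of integration has length $O(t)$ while $|g'(z)|\le C(\sigma_n t)^{-2}=O(n^{-2}t^{-2})$ throughout, so the integral is $O(n^{-2}t^{-1})$; the extra factor $t$ converting the inadmissible $(nt)^{-2}$ into the admissible $n^{-2}t^{-1}$ comes precisely from the increment $(\sigma_{n+1}-\sigma_n)t$. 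Collecting the four contributions gives the stated expansion with bounded $Z_1,\dots,Z_4$. For the right subinterval I would run the identical argument on the reflected eigenfunctions $U_n(t)=(-1)^n u_n(\pi/2-t)$, whose two–term expansion is furnished by the second formula of Lemma \ref{P2} with $X_0,\lambda$ replaced by $X_1,\lambda'$; this is exactly the form in which the expansion is needed to verify property (P5).
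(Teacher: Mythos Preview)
Your approach is correct and genuinely different from the paper's. The paper never passes through Lemma \ref{P2}; it starts from the Bessel representation of Lemma \ref{U_expansion_Bessel} and differences $(\sigma_n t)^{1/2}J_\alpha(\sigma_n t)$ directly via the fundamental theorem of calculus, expanding $J_\alpha'$ to second order and integrating by parts. Your route trades that computation for the clean product--to--sum identities available once the phase is exactly $(2n+\nu)t-\lambda$; the main terms then fall out with no computation at all, and only the remainder forces you back to the Bessel level, where you need merely the bound $g'(z)=O(z^{-2})$ rather than the full two--term expansion of $J_\alpha'$. That is a real economy.

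One point deserves tightening. When you say the remainder of Lemma \ref{P2} consists of the $O(t^2\sigma_n^{-2})$ error from Lemma \ref{U_expansion_Bessel} plus the Bessel tail $g(\sigma_n t)$, you are omitting the errors produced in passing from Lemma \ref{U_expansion} (phase $\sigma_n t-\lambda$) to Lemma \ref{P2} (phase $(2n+\nu)t-\lambda$) via Lemma \ref{asymptotic eigenvalue two}. Concretely, replacing $\sigma_n$ by $2n+\nu-\Theta/(4n)+O(n^{-2})$ in the leading cosine generates an extra $O(n^{-2})$ term, and in the $\tfrac{1}{\sigma_n t}\sin$ factor an extra $O(n^{-2}t^{-1})$ term; there is also the $\tfrac{X_0(t)}{\sigma_n}h(\sigma_n t)$ tail from $J_{\alpha+1}$. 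All three pieces are harmless---the first two are already of the target size without any differencing, and the third yields to the same FTC argument with $h'(z)=O(z^{-1})$---but they should be named so that the decomposition of the remainder is complete.
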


\begin{proof}
Observe first that, by the asymptotic expansion for $\sigma_n$ in Lemma \ref{asymptotic eigenvalue two}, for any constant $C$, any expression of the 
form 
\begin{align*}
&\sin(\sigma_{n+1}t+C),\quad \cos(\sigma_{n+1}t+C),\\
&\sin\left(\frac{\sigma_n+\sigma_{n+1}}2t+C\right),
\quad \cos\left(\frac{\sigma_n+\sigma_{n+1}}2t+C\right)
\end{align*}
multiplied by a bounded function, and by $t$ or by $n^{-1}$, fits in the desired formula.
Plugging the expression of $u_{n}$ from Lemma \ref{U_expansion_Bessel} in
$\Delta(u_{n})$ we get
\begin{align*}
&\frac{1}{\sqrt{2}}\Delta(u_{n})(t)   =\left(  \sigma_{n}t\right)
^{1/2}J_{\alpha}\left(  \sigma_{n}t\right)  -\left(  \sigma_{n+1}t\right)
^{1/2}J_{\alpha}\left(  \sigma_{n+1}t\right) \\
&  -\frac{1}{2}X_{0}\left(  t\right)  \left(  \sigma_{n}t\right)  ^{1/2}%
\frac{J_{\alpha+1}\left(  \sigma_{n}t\right)  }{\sigma_{n}}+\frac{1}{2}%
X_{0}\left(  t\right)  \left(  \sigma_{n+1}t\right)  ^{1/2}\frac{J_{\alpha
+1}\left(  \sigma_{n+1}t\right)  }{\sigma_{n+1}}+O\left(  \frac{1}{n^{2}%
}\right)  .
\end{align*}
Now,%
\begin{align*}
&  \left(  \sigma_{n}t\right)  ^{1/2}J_{\alpha}\left(  \sigma_{n}t\right)
-\left(  \sigma_{n+1}t\right)  ^{1/2}J_{\alpha}\left(  \sigma_{n+1}t\right) \\
&  =-\left(  \sigma_{n}t\right)  ^{1/2}\int_{\sigma_{n}t}^{\sigma_{n+1}%
t}J_{\alpha}^{\prime}\left(  s\right)  ds+\frac{\left(  \sigma_{n}%
-\sigma_{n+1}\right)  t}{\left(  \sigma_{n}t\right)  ^{1/2}+\left(
\sigma_{n+1}t\right)  ^{1/2}}J_{\alpha}\left(  \sigma_{n+1}t\right)
\end{align*}
Since $\sigma_n-\sigma_{n+1}=-2+O(n^{-2})$, after doing the first order asymptotics of $J_{\alpha}%
$,  when $t\in\left[  \sigma_{n}^{-1},\pi/4+\varepsilon\right)  ,$%
\begin{align*}
&\frac{\left(  \sigma_{n}-\sigma_{n+1}\right)  t}{\left(  \sigma_{n}t\right)
^{1/2}+\left(  \sigma_{n+1}t\right)  ^{1/2}}J_{\alpha}\left(  \sigma
_{n+1}t\right)\\
& =-\sqrt{\frac{2}{\pi}}\frac 1{2n}(1+O(n^{-1}))\cos\left(
\sigma_{n+1}t-\frac{\alpha\pi}{2}-\frac{\pi}{4}\right)  +O\left( \frac{1}%
{\sigma_{n+1}^{2}t}\right),
\end{align*}
which fits in the
desired formula. On the other hand, using the second order asymptotics of
$J_{\alpha}^{\prime}$  (see \cite[page 364]{AS}) i.e.,%
\[
J_{\alpha}^{\prime}\left(  s\right)  =-\sqrt{\frac{2}{\pi s}}\left(
\sin\left(  s-\frac{\pi}{2}\alpha-\frac{\pi}{4}\right)  +\frac{4\alpha^{2}%
+3}{8s}\cos\left(  s-\frac{\pi}{2}\alpha-\frac{\pi}{4}\right)  \right)
+O(  s^{-5/2})
\]
we get%
\begin{align*}
&  \int_{\sigma_{n}t}^{\sigma_{n+1}t}J_{\alpha}^{{\prime}}\left(  s\right)
ds =-\int_{\sigma_{n}t}^{\sigma_{n+1}t}\sqrt{\frac{2}{\pi s}}\left(
\sin\left(  s-\frac{\pi}{2}\alpha-\frac{\pi}{4}\right)  \right)
ds\\
&-\int_{\sigma_{n}t}^{\sigma_{n+1}t}\sqrt{\frac{2}{\pi s}}\left(
\frac{4\alpha^{2}+3}{8s}\cos\left(  s-\frac{\pi}{2}\alpha-\frac{\pi}%
{4}\right)  \right)  ds\\
&+\left(  \sigma_{n}-\sigma_{n+1}\right)  O\left(  \sigma_{n}^{-5/2}%
t^{-3/2}\right)  .
\end{align*}
Let us put $\theta:=\frac{\pi}{2}\alpha+\frac{\pi}{4}$. Integration by parts yields%
\begin{align*}
&  -\sqrt{\frac{2}{\pi}}\int_{\sigma_{n}t}^{\sigma_{n+1}t}s^{-1/2}\sin\left(
s-\theta\right)  ds-\sqrt{\frac{2}{\pi}}\frac{4\alpha^{2}+3}{8}\int%
_{\sigma_{n}t}^{\sigma_{n+1}t}s^{-3/2}\cos\left(  s-\theta\right)  ds\\
&  =\sqrt{\frac{2}{\pi}}\left[  s^{-1/2}\cos\left(  s-\theta\right)  \right]
_{\sigma_{n}t}^{\sigma_{n+1}t}+\sqrt{\frac{2}{\pi}}  \frac{1-4\alpha
^{2}}{8} \left[  s^{-3/2}\sin\left(  s-\theta\right)  \right]
_{\sigma_{n}t}^{\sigma_{n+1}t}\\
&  +\sqrt{\frac{2}{\pi}}\left(  \frac{1-4\alpha^{2}}{8}\right)  \frac{3}%
{2}\int_{\sigma_{n}t}^{\sigma_{n+1}t}s^{-5/2}\sin\left(  s-\theta\right)  ds
\end{align*}
In order to have the desired asymptotic expansion for
\[
-\left(  \sigma_{n}t\right)  ^{1/2}\int_{\sigma_{n}t}^{\sigma_{n+1}t}%
J_{\alpha}^{{\prime}}\left(  s\right)  ds,
\]
we first look at
\[
\sqrt{\frac{2}{\pi}}\left[  s^{-1/2}\cos\left(  s-\theta\right)  \right]
_{\sigma_{n}t}^{\sigma_{n+1}t}.
\]
Adding and subtracting $-\sqrt{2/\pi}\left(  \sigma_{n}t\right)  ^{-1/2}%
\cos\left(  \sigma_{n+1}t-\theta\right)  $ in the above expression and
simplifying further, we get
\begin{align*}
&  \sqrt{\frac{2}{\pi}}\left(  \sigma_{n+1}t\right)  ^{-1/2}\cos\left(
\sigma_{n+1}t-\theta\right)  -\sqrt{\frac{2}{\pi}}(\sigma_{n}t)^{-1/2}%
\cos\left(  \sigma_{n}t-\theta\right) \\
&  =-\sqrt{\frac{2}{\pi}}\frac{2}{\left(  \sigma_{n}t\right)  ^{1/2}}%
\sin\left(  \frac{(\sigma_{n}+\sigma_{n+1})t}{2}-\theta\right)  \sin\left(
\frac{(\sigma_{n}-\sigma_{n+1})t}{2}\right) \\
& +\sqrt{\frac{2}{\pi}}\frac
{\sigma_{n+1}-\sigma_{n}}{\left(  \sigma_{n}t\right)  ^{1/2}\sigma_{n+1}%
^{1/2}}\frac{\cos\left(  \sigma_{n+1}t-\theta\right)  }{(\sigma_{n}%
^{1/2}+\sigma_{n+1}^{1/2})}.
\end{align*}
Similarly the second term $\sqrt{\frac{2}{\pi}} \frac{1-4\alpha^{2}}{8}  \left[
s^{-3/2}\sin\left(  s-\theta\right)  \right]  _{\sigma_{n}t}^{\sigma_{n+1}t}$
can be written in the form
\begin{align*}
&  \sqrt{\frac{2}{\pi}}  \frac{1-4\alpha^{2}}{8\left(  \sigma_{n}t\right)  ^{3/2}}  
{2}\cos\left(  \frac{(\sigma_{n+1}%
+\sigma_{n})t}{2}-\theta\right)  \sin\left(  \frac{(\sigma_{n}-\sigma_{n+1}%
)t}{2}\right)  +O\left(  \frac{1}{\sigma_{n}^{5/2}t^{3/2}}\right)
\end{align*}
For the third term it is easy to check that  
\[\int_{\sigma_{n}t}^{\sigma_{n+1}t}s^{-5/2}\sin\left(  s-\theta
\right)  ds=O\left(  \sigma_{n}^{-5/2}t^{-3/2}\right).
\]

Using once again   $\left(  \sigma_{n}-\sigma_{n+1}\right)
=-2+O(n^{-2})$, we deduce that
\[
2\sqrt{\frac{2}{\pi}}t^{-1}\sin\left(  \frac{(\sigma_{n}-\sigma_{n+1})}%
{2}t\right)  =-2\sqrt{\frac{2}{\pi}}t^{-1}\sin t+O\left(  n^{-2}\right)
\]
If we define the bounded function $Z(t):=-2\sqrt{\frac{2}{\pi}}t^{-1}\sin t$,
then combining the above asymptotic expansions we can finally write%
\begin{align*}
&  -\left(  \sigma_{n}t\right)  ^{1/2}\int_{\sigma_{n}t}^{\sigma_{n+1}%
t}J_{\alpha}^{\prime}\left(  s\right)  ds\\
&  =\sin\left(  \frac{(\sigma_{n}+\sigma_{n+1})t}{2}-\theta\right)
2\sqrt{\frac{2}{\pi}}\sin\left(  \frac{(\sigma_{n}-\sigma_{n+1})t}{2}\right)\\
& -\sqrt{\frac{2}{\pi}}\frac{\sigma_{n+1}-\sigma_{n}}{\sigma_{n+1}^{1/2}%
}\frac{\cos\left(  \sigma_{n+1}t-\theta\right)  }{(\sigma_{n}^{1/2}%
+\sigma_{n+1}^{1/2})}\\
&- \frac{1-4\alpha^{2}}{8}  \frac{1}%
{\sigma_{n}}\cos\left(  \frac{(\sigma_{n+1}+\sigma_{n})t}{2}-\theta\right)
2\sqrt{\frac{2}{\pi}}\frac{1}{t}\sin\left(  \frac{(\sigma_{n}-\sigma_{n+1}%
)t}{2}\right) 
  {+O\left(  \frac{1}{\sigma_{n}^{2}t}\right) }\\
&  =-t\sin\left(  \frac{(\sigma_{n}+\sigma_{n+1})t}{2}-\theta\right)
Z(t)
 +\sqrt{\frac{2}{\pi}}\frac{1}{2n}%
{\cos\left(  \sigma_{n+1}t-\theta\right)  }\\
&+ \frac{1-4\alpha^{2}}{8}  \frac{1}%
{2n}\cos\left(  \frac{(\sigma_{n+1}+\sigma_{n})t}{2}-\theta\right)
Z(t)  {+O\left(  \frac{1}{\sigma_{n}^{2}t}\right) }.
\end{align*}
This part also fits in the desired formula.

Let us handle
\[
-\frac{1}{2}X_{0}\left(  t\right)  \left(  \sigma_{n}t\right)  ^{1/2}%
\frac{J_{\alpha+1}\left(  \sigma_{n}t\right)  }{\sigma_{n}}+\frac{1}{2}%
X_{0}\left(  t\right)  \left(  \sigma_{n+1}t\right)  ^{1/2}\frac{J_{\alpha
+1}\left(  \sigma_{n+1}t\right)  }{\sigma_{n+1}}%
\]
now. We rewrite it as
\[
-\frac{1}{2}X_{0}\left(  t\right)  \left(  \left(  \sigma_{n}t\right)
^{1/2}\frac{J_{\alpha+1}\left(  \sigma_{n}t\right)  }{\sigma_{n}}-\left(
\sigma_{n+1}t\right)  ^{1/2}\frac{J_{\alpha+1}\left(  \sigma_{n+1}t\right)
}{\sigma_{n+1}}\right)  .
\]
Note that the expression in the bracket above is similar to the one dealt with
before, except for the extra factor of order $n$ in the denominator. After
adding and subtracting $\left(  \sigma_{n}t\right)  ^{1/2}\frac{J_{\alpha
+1}\left(  \sigma_{n+1}t\right)  }{\sigma_{n+1}}$ inside the bracket above we
proceed in the same way as before. Due to the extra factor of $O(n^{-1})$ we
just need to use the first order approximation of the derivative of the Bessel
function. We do not go into further details. 
\end{proof}

\begin{lemma}\label{P4}
For all  $t\in \left(  0,n^{-1}\right]  ,$%
\[
\Delta u_{n}\left(  t\right)  =-\frac{2\alpha+1}{2n}u_{n}\left(  t\right)
+O\left(  n^{-2}+t\right)  .
\]
In particular,
\[
\Delta u_{n}\left(  t\right)  =O\left(  \left(  nt\right)  ^{\alpha+\frac
{1}{2}}n^{-1}+n^{-2}+t\right),
\]
so that condition (P4) holds with $\tau=\min(1,\alpha+1/2)$.
A similar expansion holds in $\left[  \pi/2-n^{-1},\pi/2\right)  .$
\end{lemma}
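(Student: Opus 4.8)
The plan is to work entirely from the Bessel expansion of $u_n$ near $0$ in Lemma \ref{U_expansion_Bessel}, together with the sharp eigenvalue asymptotics of Lemma \ref{asymptotic eigenvalue two}. First I would record the crucial consequence of the latter, $\sigma_{n+1}-\sigma_n=2+O(n^{-2})$, and observe that throughout the range $t\in(0,1/n]$ the argument $z:=\sigma_n t$ stays in a fixed bounded interval $(0,C]$, with the two companion facts $z^k=O(1)$ for every $k>0$ and, most importantly, $\sigma_n t^2=(\sigma_n t)\,t=O(t)$ since $nt\le1$. These are exactly what convert every ``genuinely quadratic'' error into a quantity of size $O(t)$ or $O(n^{-2})$.

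Next I would isolate the leading Bessel term. Writing $g(z)=z^{1/2}J_\alpha(z)=z^{\alpha+1/2}\psi(z)$ with $\psi(z)=z^{-\alpha}J_\alpha(z)$, the function $\psi$ is a (convergent, even) power series in $z^2$ on $[0,C]$, with $\psi(0)=(2^\alpha\Gamma(\alpha+1))^{-1}$, $\psi(z)-\psi(0)=O(z^2)$ and $\psi'(z)=-z^{-\alpha}J_{\alpha+1}(z)=O(z)$. By Lemma \ref{U_expansion_Bessel}, $u_n(t)=\sqrt2\,g(\sigma_n t)+R_n(t)$, where $R_n$ collects the $X_0$-term and the remainder; using $X_0(t)=O(t)$, $\sigma_n^{-1}=O(n^{-1})$ and the boundedness of $(\sigma_n t)^{1/2}J_{\alpha+1}(\sigma_n t)$, one checks that both $R_n(t)$ and $n^{-1}R_n(t)$ are $O(n^{-2}+t)$. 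Hence the contribution of $R_n$ to $\Delta u_n(t)$ and to $-\tfrac{2\alpha+1}{2n}u_n(t)$ is admissible, and it suffices to analyse $\sqrt2\bigl(g(\sigma_n t)-g(\sigma_{n+1}t)\bigr)$.

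I would then split this difference as
\[
\bigl[(\sigma_n t)^{\alpha+1/2}-(\sigma_{n+1}t)^{\alpha+1/2}\bigr]\psi(\sigma_n t)
+(\sigma_{n+1}t)^{\alpha+1/2}\bigl[\psi(\sigma_n t)-\psi(\sigma_{n+1}t)\bigr].
\]
The second summand is $O(t)$, since $\psi(\sigma_n t)-\psi(\sigma_{n+1}t)=\psi'(\xi)(\sigma_n-\sigma_{n+1})t=O(\sigma_n t)\,O(t)=O(\sigma_n t^2)=O(t)$ and $(\sigma_{n+1}t)^{\alpha+1/2}=O(1)$. For the first summand I would use $\sigma_n^{\alpha+1/2}-\sigma_{n+1}^{\alpha+1/2}=-(2\alpha+1)\sigma_n^{\alpha-1/2}+O(\sigma_n^{\alpha+1/2}n^{-2})$, replace $\psi(\sigma_n t)$ by $\psi(0)+O((\sigma_n t)^2)$, and insert $\sigma_n^{\alpha-1/2}=\tfrac{\sigma_n^{\alpha+1/2}}{2n}(1+O(n^{-1}))$. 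Every discarded term then carries either a factor $(\sigma_n t)^{\text{positive}}n^{-2}=O(n^{-2})$ or a factor $\sigma_n t^2=O(t)$, so the first summand equals $-\tfrac{2\alpha+1}{2n}\sqrt2\,g(\sigma_n t)+O(n^{-2}+t)$. Recombining with the $R_n$ analysis gives $\Delta u_n(t)=-\tfrac{2\alpha+1}{2n}u_n(t)+O(n^{-2}+t)$.

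Finally, the ``in particular'' assertion follows by inserting $|u_n(t)|\le C(\sigma_n t)^{\alpha+1/2}\le C(nt)^{\alpha+1/2}$ (immediate from $g(z)=z^{\alpha+1/2}\psi(z)$ with $\psi$ bounded), which yields $\Delta u_n(t)=O\bigl((nt)^{\alpha+1/2}n^{-1}+n^{-2}+t\bigr)$, and then noting that for $t\in(0,1/n]$ and $\tau=\min(1,\alpha+1/2)$ one has both $t\le(nt)^{\tau}n^{-1}$ and $(nt)^{\alpha+1/2}n^{-1}\le(nt)^{\tau}n^{-1}$ because $nt\le1$; this is exactly the form demanded by (P4). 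The expansion near $\pi/2$ is identical with $\beta$ in place of $\alpha$, using the second half of Lemma \ref{U_expansion_Bessel}. The only genuine obstacle is the uniformity over the whole interval $(0,1/n]$: one cannot simply Taylor-expand in $\sigma_n t$, which need not be small near $t=1/n$, and the entire argument rests on systematically trading the bounded quantity $\sigma_n t$ against the small quantity $\sigma_n t^2=O(t)$ rather than on any single smallness hypothesis.
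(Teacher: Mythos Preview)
Your proof is correct and follows essentially the same strategy as the paper: reduce to the leading Bessel term via Lemma~\ref{U_expansion_Bessel}, then compute the finite difference by a discrete product rule. The only real difference is the choice of factorization. The paper writes the leading term as $\sigma_n^{1/2}\cdot\bigl(t^{1/2}J_\alpha(\sigma_n t)\bigr)$ and differentiates each factor separately, using the near-origin expansion $J_\alpha'(s)=\alpha s^{\alpha-1}/(2^\alpha\Gamma(\alpha+1))+O(s^{\alpha+1})$ to handle $\Delta(J_\alpha(\sigma_n t))$; the coefficient $-(2\alpha+1)/(2n)$ then emerges as $-(1+2\alpha)/(2n)$, and $\alpha=0$ must be noted as a separate (trivial) case. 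Your factorization $g(z)=z^{\alpha+1/2}\psi(z)$ with $\psi=z^{-\alpha}J_\alpha$ even and entire is slightly cleaner: the exponent $\alpha+1/2$ produces the coefficient $-(2\alpha+1)$ in one step, the smooth factor $\psi$ contributes only an $O(t)$ error via the mean value theorem, and no case distinction at $\alpha=0$ is needed. (Your intermediate replacement $\psi(\sigma_n t)\to\psi(0)+O((\sigma_n t)^2)$ is in fact unnecessary---keeping $\psi(\sigma_n t)$ throughout already gives $-\tfrac{2\alpha+1}{2n}g(\sigma_n t)+O(n^{-2})$ directly---but it does no harm.) The verification that the $R_n$ and ``in particular'' parts fit into $O(n^{-2}+t)$ and then into the (P4) form with $\tau=\min(1,\alpha+1/2)$ is carried out the same way in both arguments.
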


\begin{proof}
By Lemma \ref{U_expansion_Bessel}, for $t\le n^{-1}$ we have%
\begin{align*}
u_{n}(t)  &  =\sqrt{2}  \left(  \sigma_{n}t\right)  ^{1/2}J_{\alpha
}\left(  \sigma_{n}t\right)  -\frac{\sqrt2}{2}X_{0}\left(  t\right)  \left(
\sigma_{n}t\right)  ^{1/2}\frac{J_{\alpha+1}\left(  \sigma_{n}t\right)
}{\sigma_{n}} \\
&\quad+O\left(  \frac{t^2(\sigma_{n}t)
^{\alpha+1/2}}{\sigma_{n}^{2}}\right) \\
 u_{n}(t) & =\sqrt{2}\left(  \sigma_{n}t\right)  ^{1/2}J_{\alpha}\left(  \sigma
_{n}t\right)  +O\left(  \left(  t\sigma_{n}\right)  ^{\alpha+5/2}{\sigma_n}^{-2}\right) .
\end{align*}
Notice that, while the first identity has to be adjusted with the usual logarithmic correction when $\alpha=0$,
the second works for $\alpha=0$ too. Thus%
\begin{align*}
\Delta u_{n}\left(  t\right)&  =\sqrt{2}\Delta\left(  \sigma_{n}^{1/2}\right)
t^{1/2}J_{\alpha}\left(  \sigma_{n}t\right)  +\sqrt{2}\left(  \sigma
_{n}t\right)  ^{1/2}\Delta\left(  J_{\alpha}\left(  \sigma_{n}t\right)
\right) \\
&\quad +O\left( (tn)^{\alpha+5/2} n^{-2}\right),
\end{align*}%
where
\[
\Delta\left(  \sigma_{n}^{1/2}\right)  =\sigma_{n}^{1/2}-\sigma_{n+1}%
^{1/2}=-\sigma_{n}^{1/2}\left(  \frac{1}{2n}+O\left(  n^{-2}\right)  \right)
.
\]
It is well known that
\[
J_{\alpha}^{\prime}\left(  s\right)  =\left\{
\begin{array}
[c]{ll}%
\dfrac{\alpha s^{\alpha-1}}{2^{\alpha}\Gamma\left(  \alpha+1\right)
}+O\left(  s^{\alpha+1}\right)  & \text{for }\alpha\neq0\\
O\left(  s\right)  & \text{for }\alpha=0,
\end{array}
\right.
\]
then, for $\alpha\neq0$%
\begin{align*}
\Delta\left(  J_{\alpha}\left( \sigma_n t\right)  \right)   &
=J_{\alpha}\left(  \sigma_{n}t\right)  -J_{\alpha}\left(  \sigma_{n+1}t\right)
  =-\int_{\sigma_{n}t}^{\sigma_{n+1}t}J_{\alpha}^{\prime}\left(  s\right)
ds\\
&  =-\frac{1}{2^{\alpha}\Gamma\left(  \alpha+1\right)  }\int_{\sigma_{n}%
t}^{\sigma_{n+1}t}\alpha s^{\alpha-1}ds+O\left(  \left(  \sigma_{n}t\right)
^{\alpha+1}t\right) \\
&  =\frac{-2\alpha\left(  \sigma_{n}t\right)  ^{\alpha}}{2^{\alpha}%
\Gamma\left(  \alpha+1\right)  2n}+O\left(  \left(  \sigma
_{n}t\right)  ^{\alpha}n^{-2}\right)  +O\left(  \left(  \sigma_{n}t\right)
^{\alpha+1}t\right) \\
&  =\frac{-2\alpha}{2n}J_{\alpha}\left(  \sigma_{n}t\right)  +O\left(
\left(  \sigma_{n}t\right)  ^{\alpha}n^{-2}\right)  +O\left(  \left(
\sigma_{n}t\right)  ^{\alpha+1}t\right)  .
\end{align*}
Finally, when $nt<1,$ after using the expansions above and rearranging terms we get%
\begin{align*} 
\Delta u_{n}\left(  t\right)   &  =\sqrt{2}\Delta\left(  \sigma_{n}%
^{1/2}\right)  t^{1/2}J_{\alpha}\left(  \sigma_{n}t\right)  +\sqrt{2}\left(
\sigma_{n}t\right)  ^{1/2}\Delta\left(  J_{\alpha}\left(  \sigma_{n}t\right)
\right) \\
&\quad +O\left( (tn)^{\alpha+5/2} {n}^{-2}\right) \\
&  =\sqrt{2}\left(  -\sigma_{n}^{1/2}\left(  \frac{1}{2n}+O\left(
n^{-2}\right)  \right)  \right)  t^{1/2}J_{\alpha}\left(  \sigma_{n}t\right)\\
&\quad  +\sqrt{2}\left(  \sigma_{n}t\right)  ^{1/2}\left(  \frac{-2\alpha}%
{2{n}}J_{\alpha}\left(  \sigma_{n}t\right)  +O\left(  \left(  \sigma
_{n}t\right)  ^{\alpha}n^{-2}\right)  +O\left(  \left(  \sigma_{n}t\right)
^{\alpha+1}t\right)  \right) \\
&\quad +O\left( (tn)^{\alpha+5/2} n^{-2}\right) \\
&  =-\frac{2\alpha+1}{2n}\sqrt{2}\left(  \sigma_{n}t\right)  ^{1/2}J_{\alpha
}\left(  \sigma_{n}t\right)  +O\left(  (nt)^{\alpha+1/2}n^{-2}+(nt)^{\alpha+5/2}t\right) \\
&  =-\frac{2\alpha+1}{2n}u_{n}\left(  t\right)  +O\left(  n^{-2}+t\right)  ,
\end{align*}
by Lemma \ref{U_expansion_Bessel}. The same formula holds for $\alpha=0.$
\end{proof}

\subsection{Pointwise convergence}
We are now ready to state the main results of this section.

\begin{theorem}\label{fits}
The eigenfunctions of $\tilde \ell$, $\{u_n\}_{n=0}^{+\infty}$, satisfy the properties (P1)-(P5).
\end{theorem}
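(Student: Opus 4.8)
The plan is to read off each of the five properties from the asymptotic expansions already established, so that almost all of the work has been done in the preceding lemmas. Properties (P1)--(P4) are essentially immediate; the only property requiring a genuinely new (if short) argument is (P5), and that is where I expect the one subtle point to sit. First, (P1) is exactly the final sentence of Lemma \ref{U_expansion_Bessel}, where the uniform boundedness of the $u_n$ on $(0,\pi/2)$ is recorded. Next, (P2) is the first expansion of Lemma \ref{P2}, read on $[n^{-1},\pi/4]$, upon setting
\[
\nu=1+\alpha+\beta,\qquad \lambda=\frac{\alpha\pi}{2}+\frac{\pi}{4},\qquad Y_0(x)=-\frac{\alpha^2-\tfrac14+xX_0(x)-\Theta x^2}{4\sqrt\pi}.
\]
Here I only need to note that $Y_0\in L^\infty(0,\pi/4]$: since $X_0(x)=\int_0^x\eta_0$ with $\eta_0\in\mathcal C^2$ near $0$, the factor $xX_0(x)$ is bounded (indeed it tends to $0$), so $Y_0$ is continuous and bounded on $(0,\pi/4]$.

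Property (P3) is precisely Lemma \ref{P3}, whose functions $Z_1,\dots,Z_4$ are bounded by construction. Property (P4) is the second display of Lemma \ref{P4}: for $x\in(0,1/n]$ one has $nx\le1$, so $x=(nx)n^{-1}\le(nx)^{\tau}n^{-1}$ for any $\tau\le1$, which lets the lone term $x$ in that display be absorbed into $(nx)^{\tau}n^{-1}$ with $\tau=\min(1,\alpha+\tfrac12)$. Thus (P1)--(P4) hold with the constants just displayed.

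The only property needing separate treatment is (P5), and here I would define
\[
U_n(x):=(-1)^n u_n\!\left(\tfrac{\pi}{2}-x\right),
\]
so that the required relation $u_n(\pi/2-x)=(-1)^n U_n(x)$ holds identically, with zero error (\emph{a fortiori} an $O(n^{-2})$). It then remains to check that $\{U_n\}$ itself satisfies (P1)--(P4). The key observation is that the prefactor $(-1)^n$ exactly cancels the $(-1)^n$ appearing in the expansions of $u_n$ near $\pi/2$: substituting the \emph{second} expansion of Lemma \ref{U_expansion_Bessel} (valid on $(\pi/4-\varepsilon,\pi/2)$) yields
\[
U_n(x)=\sqrt2\left((\sigma_n x)^{1/2}J_\beta(\sigma_n x)-\tfrac12 X_1(x)(\sigma_n x)^{1/2}\frac{J_{\beta+1}(\sigma_n x)}{\sigma_n}\right)+O\!\left(\frac{x^2\min(1,\sigma_n x)^{\beta+5/2}}{\sigma_n^2}\right),
\]
which is word-for-word the Bessel expansion of Lemma \ref{U_expansion_Bessel} for $u_n$ near $0$, but with $\alpha$ replaced by $\beta$ and $X_0$ replaced by $X_1$ (and the \emph{same} eigenvalues $\sigma_n$).

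Since Lemmas \ref{P2}, \ref{P3} and \ref{P4} are derived purely from that Bessel expansion, running their proofs verbatim under the substitution $\alpha\mapsto\beta$, $X_0\mapsto X_1$, $\lambda\mapsto\lambda'=\beta\pi/2+\pi/4$ gives (P2)--(P4) for $U_n$, now with $\tau'=\min(1,\beta+\tfrac12)$, while (P1) for $U_n$ is once more the uniform boundedness of the $u_n$. I expect the main (and essentially only) obstacle to be conceptual rather than computational: one must notice that, precisely because of the $(-1)^n$, the difference $\Delta U_n=U_n-U_{n+1}$ collapses into a genuine difference of Bessel terms $(\sigma_n x)^{1/2}J_\beta(\sigma_n x)-(\sigma_{n+1}x)^{1/2}J_\beta(\sigma_{n+1}x)$ --- rather than a sum of consecutive eigenfunctions --- so that the delicate telescoping estimate of Lemma \ref{P3} (and the companion estimate of Lemma \ref{P4} on $(0,1/n]$) applies unchanged. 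Once this cancellation is seen, (P5) follows with no further work, completing the verification of all five properties.
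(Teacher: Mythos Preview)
Your proof is correct and follows the same approach as the paper, which simply cites Lemma \ref{U_expansion_Bessel} for (P1), Lemmas \ref{P2}, \ref{P3}, \ref{P4} for (P2)--(P4), and ``the second part of the last three mentioned lemmas'' for (P5). Your treatment of (P5) --- defining $U_n(x)=(-1)^n u_n(\pi/2-x)$ and observing that the two factors of $(-1)^n$ (one from this definition, one from the Bessel expansion of $u_n$ near $\pi/2$) cancel so that $\Delta U_n$ is again a genuine difference of Bessel terms --- is exactly the content the paper leaves implicit in that terse citation.
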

\begin{proof}
Property (P1) is contained in Lemma \ref{U_expansion_Bessel}, (P2) in Lemma \ref{P2},
(P3) in Lemma \ref{P3} and (P4) in Lemma \ref{P4}. Property (P5) follows from the second part of
the last three mentioned lemmas.
\end{proof}

\begin{theorem}\label{equiconv_normal}
Assume that the sequences $\left\{  r_{n,N}\right\}  _{n=0}^{+\infty}$ satisfy (S1)
and (S2) and
suppose that for any compactly supported smooth function $g$ on $\left(
0,\pi/2\right)  $ and for any $t\in\left(  0,\pi/2\right)  $ one has
\begin{equation*}
\lim_{N\rightarrow+\infty}T_{N}g\left(  x\right)  - D_{N}g\left(  x\right)=0.
\end{equation*}
Then for any $f\in L^{1}\left(  \left(  0,\pi/2\right)  ,  dt\right)  $ and for any $t\in\left(  0,\pi/2\right)  $ one has%
\[
\lim_{N\rightarrow+\infty}T_{N}f\left(  t\right)
- D_{N}f \left(  t\right)
=0.
\]
Furthermore, if for each compactly supported smooth function $g$ on $\left(
0,\pi/2\right)  $
\begin{equation}
\lim_{N\rightarrow+\infty}T_{N}g\left(  x\right) \left(  t\right)  D_{N}g\left(  x\right)  =0\label{dense1}
\end{equation}
uniformly on $\left(  0,\pi/2\right)  $, then for each set $\Gamma
\subset\left(  0,\pi/2\right)  $ with positive distance from $0$ and from
$\pi/2$ and for all $f\in L^{1}\left(  \left(  0,\pi/2\right)  , dt\right)  $%
\[
\lim_{N\rightarrow+\infty}T_{N}f\left(  x\right)  -  D_{N}f\left(  x\right)  =0
\]
uniformly on $\Gamma.$
\end{theorem}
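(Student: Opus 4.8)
The plan is to recognize Theorem \ref{equiconv_normal} as an immediate specialization of the abstract equiconvergence result Theorem \ref{equiconv_L1}. All the real work has already been done: the only task is to verify that every hypothesis of the abstract theorem is satisfied in the present concrete setting, and then quote it.

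First I would invoke Theorem \ref{fits}, which asserts that the normalized eigenfunctions $\{u_n\}_{n=0}^{+\infty}$ of $\tilde\ell$ satisfy properties (P1)--(P5). The summability sequences $\{r_{n,N}\}$ are assumed in the statement to satisfy (S1) and (S2). Thus the two structural hypotheses of Theorem \ref{equiconv_L1}, one on the orthonormal basis and one on the multiplier, are both in force.

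Next I would choose the dense subspace to be $\Omega=C_c^\infty\left((0,\pi/2)\right)$, the space of smooth functions with compact support in the open interval. Since the endpoints have measure zero, $L^1\left([0,\pi/2]\right)$ and $L^1\left((0,\pi/2)\right)$ coincide, and it is standard that $C_c^\infty\left((0,\pi/2)\right)$ is dense in this space. By the hypothesis of the theorem, for every $g\in\Omega$ and every $x\in(0,\pi/2)$ one has $\lim_{N\to+\infty}\left(T_Ng(x)-D_Ng(x)\right)=0$. Consequently every assumption of Theorem \ref{equiconv_L1} holds with this choice of $\Omega$, and the first conclusion, pointwise convergence for all $f\in L^1\left((0,\pi/2),dt\right)$, follows directly. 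For the uniform statement, I would observe that uniform convergence of $T_Ng-D_Ng$ on all of $(0,\pi/2)$ for $g\in\Omega$, as posited in \eqref{dense1}, implies in particular uniform convergence on any $\Gamma\subset(0,\pi/2)$ having positive distance from $0$ and from $\pi/2$; the second part of Theorem \ref{equiconv_L1} then delivers the corresponding uniform equiconvergence on $\Gamma$ for every $f\in L^1$.

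There is essentially no substantive obstacle here. The only point requiring a word of justification is the density of $C_c^\infty\left((0,\pi/2)\right)$ in $L^1\left([0,\pi/2]\right)$, which is routine, so the entire argument reduces to checking that the labels of the earlier hypotheses line up with what is assumed and concluded here.
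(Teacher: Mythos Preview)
Your proposal is correct and matches the paper's own proof essentially line for line: invoke Theorem \ref{fits} to get (P1)--(P5), take $\Omega=C_c^\infty((0,\pi/2))$ as the dense subspace of $L^1$, and apply Theorem \ref{equiconv_L1} for both the pointwise and the uniform conclusion.
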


\begin{proof}
Compactly supported smooth functions are dense in $L^{1}\left(  \left(
0,\pi/2\right)  ,dt\right)$, and since the system $\left\{  u_{n}\right\}
_{n=0}^{+\infty}$ satisfies properties (P1)-(P5), we can apply Theorem \ref{equiconv_L1}
 and deduce both the pointwise and the uniform result. \end{proof}

There is a simple class of sequences $\{r_{n,N}\}$ for which one can guarantee 
condition \eqref{dense1}.

\begin{theorem} \label{equiconv_one}
Assume the sequences $\left\{  r_{n,N}\right\}  _{n=0}^{+\infty}$ satisfy (S1)
and (S2) and there exists a number $R$ such that for all $n\geq0$,%
\[
\lim_{N\rightarrow+\infty}r_{n,N}=R.
\]
Then for any $f\in L^{1}\left(  \left(  0,\pi/2\right), dt\right)  $ and for any $t\in\left(  0,\pi/2\right)  $ one has%
\[
\lim_{N\rightarrow+\infty} T_{N}f\left(  t\right)
- D_{N}f  \left(  t\right)
 =0,
\]
and the convergence is uniform on all sets $\Gamma\subset\left(
0,\pi/2\right)  $ with positive distance from $0$ and from $\pi/2$.
\end{theorem}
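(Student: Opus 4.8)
The plan is to reduce everything to Theorem \ref{equiconv_normal}. That theorem already produces both the pointwise conclusion for all $f\in L^1$ and the uniform conclusion on sets $\Gamma$ at positive distance from the endpoints, \emph{provided} its hypothesis is met; so it suffices to verify that for every smooth function $g$ with compact support in $(0,\pi/2)$ one has
\[
\lim_{N\to+\infty}\bigl(T_Ng(x)-D_Ng(x)\bigr)=0\qquad\text{uniformly on }(0,\pi/2).
\]
I will prove this by writing the difference as a single series and passing the limit inside.

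Concretely, set $c_0(x)=\widehat g(0)u_0(x)-\tfrac{2}{\pi}\int_0^{\pi/2}g(y)\,dy$ and, for $n\ge1$,
\[
c_n(x)=\widehat g(n)u_n(x)-\frac{4}{\pi}\Bigl(\int_0^{\pi/2}g(y)\cos(2ny)\,dy\Bigr)\cos(2nx),
\]
so that $T_Ng(x)-D_Ng(x)=\sum_{n=0}^{+\infty}r_{n,N}c_n(x)$. The key step is to show that
\[
\sum_{n=0}^{+\infty}\sup_{x}\,|c_n(x)|<+\infty .
\]
For the cosine part this is clear: repeated integration by parts shows that $\int_0^{\pi/2}g(y)\cos(2ny)\,dy$ decays faster than any power of $n$. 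For the eigenfunction part, since $g$ is smooth with compact support it lies in $\mathcal D(\tilde\ell)$ and $\ell g$ is again smooth with compact support (the potential is smooth on the support of $g$); self-adjointness of $\tilde\ell$ then gives $\widehat g(n)=\mu_n^{-1}\,\widehat{\ell g}(n)$ with $|\widehat{\ell g}(n)|\le\|\ell g\|_{L^2}$, and by the eigenvalue asymptotic $\mu_n=4n^2+O(n)$ from Proposition \ref{boundary conditions} we obtain $|\widehat g(n)|=O(n^{-2})$. Combined with (P1), $\sup_x|\widehat g(n)u_n(x)|=O(n^{-2})$, which is summable.

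With this uniform majorant in hand, $|r_{n,N}c_n(x)|\le B\sup_x|c_n(x)|$ by (S1), so dominated convergence for series lets me interchange $\lim_N$ and $\sum_n$. Using the hypothesis $r_{n,N}\to R$,
\[
\lim_{N\to+\infty}\bigl(T_Ng(x)-D_Ng(x)\bigr)=R\sum_{n=0}^{+\infty}c_n(x),
\]
and the convergence is uniform in $x$ because the domination is. Finally $\sum_n c_n(x)=0$: the uniform absolute convergence makes $\sum_n\widehat g(n)u_n$ a continuous function that coincides with $g$ in $L^2$ (as $\{u_n\}$ is an orthonormal basis), hence equals $g$ pointwise; the same holds for the cosine expansion of $g$, so the two sums cancel. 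Thus $T_Ng-D_Ng\to0$ uniformly on $(0,\pi/2)$, and Theorem \ref{equiconv_normal} concludes.

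The main obstacle is precisely the uniform absolute summability $\sum_n\sup_x|c_n(x)|<+\infty$, which is what legitimizes interchanging the limit in $N$ with the summation in $n$; it rests on the $O(n^{-2})$ decay of $\widehat g(n)$ extracted from self-adjointness together with the quadratic growth of $\mu_n$.
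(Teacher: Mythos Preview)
Your proposal is correct and follows essentially the same route as the paper: show that for smooth compactly supported $g$ the hypothesis of Theorem \ref{equiconv_normal} holds uniformly, by using self-adjointness of $\tilde\ell$ to get rapid decay of $\widehat g(n)$, then apply dominated convergence. The only cosmetic difference is that the paper shows $T_Ng\to Rg$ and $D_Ng\to Rg$ separately (using $\widehat g(n)=O(n^{-2k})$ for any $k$), whereas you work directly with the difference and need only $O(n^{-2})$; either way the conclusion follows.
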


\begin{proof}
Let $g$ be a compactly supported smooth function on $\left(  0,\pi/2\right)
.$ For any positive integer $k$ we have%
\begin{align*}
\widehat{g}\left(  n\right) & =\int_{0}^{\pi/2}g\left(  t\right)  u_{n}\left(
t\right)  dt=\frac{1}{ \mu_{n} ^{k}}\int_{0}^{\pi/2}g\left(
t\right) \tilde \ell^{k}u_{n}\left(  t\right)  dt\\
&=\frac{1}{  \mu_{n}
^{k}}\int_{0}^{\pi/2}\tilde\ell^{k}g\left(  t\right)  u_{n}\left(  t\right)
dt=O\left(  n^{-2k}\right)
\end{align*}
and by the Lebesgue dominated convergence theorem,%
\[
\lim_{N\rightarrow+\infty}T_{N}g\left(  t\right)  =\lim_{N\rightarrow+\infty
}\sum_{n=0}^{+\infty}r_{n,N}\widehat{g}\left(  n\right)  u_{n}\left(
t\right)  =\sum_{n=0}^{+\infty}R\widehat{g}\left(  n\right)  u_{n}\left(
t\right)  =Rg\left(  t\right)  ,
\]
with uniform convergence on $\left(  0,\pi/2\right)  $, and similarly
$\lim_{N\rightarrow+\infty}D_{N}g\left(  t\right)  =Rg\left(  t\right)  $
uniformly on $\left(  0,\pi/2\right)  $.  Theorem \ref{equiconv_normal} now concludes the proof. 
\end{proof}

Thus, since for any $\theta\ge0$ the sequences $\{r_{n,N}\}$ defined by
\[
r_{n,N}=\frac{A_{N-n}^\theta}{A_N^\theta}
\]
where $A_n^\theta = {{n+\theta}\choose n}$ for $n\ge0$, and $A_n^\theta =0$ for $n<0$, satisfy the hypotheses of Theorem \ref{equiconv_one}, classical results for Ces\`aro means of Fourier series with respect to the cosine basis can be restated in exactly the same form for the Ces\`aro means (partial sums, when $\theta=0$), with respect to the basis $\{u_n\}$,
\[
\mathcal T^\theta_Nf(t)=\sum_{n=0}^{N}\frac{A_{N-n}^\theta}{A_N^\theta}\widehat f(n) u_n(t).
\]
Here is a non exhaustive list of results of this type.
\begin{itemize}
\item (M. Riesz) Let $\theta>0$, and let $f\in L^1((0,\pi/2),dt)$. Then $\mathcal T^\theta_Nf(t)\to f(t)$ at every point of continuity $t$ of $f$. The convergence is uniform on every closed set of points of continuity with positive distance from $0$ and $\pi/2$.
\item (Kahane-Katznelson) For any $E\subset (0,\pi/2)$ such that $|E|=0$ there exists a continuous function $f\in L^1((0,\pi/2),dt)$ such that $\mathcal T^0_Nf(t)$ diverges for all $t\in E$.
\item (Kolmogorov) There is a function $f\in L^1((0,\pi/2),dt)$ such that $\mathcal T^0_Nf(t)$ diverges everywhere.
\item (Carleson-Hunt) Let $p>1$ and let $f\in L^p((0,\pi/2),dt)$. Then $\mathcal T^0_Nf(t)\to f(t)$ for almost every $t\in(0,\pi/2)$. 
\end{itemize}

\section{Perturbed Jacobi operator in general form}\label{general}

In this section we consider the operator given by 
\[
Lv:=\frac{1}{A}\left(  Av^{\prime}\right)  ^{\prime}%
\] Here $A\left(  t\right)  $ is defined on $\left[  0,\pi/2\right]  $ by
\[
A\left(  t\right)  =\left(  \sin t\right)  ^{2\alpha+1}\left(  \cos t\right)
^{2\beta+1}B\left(  t\right)  ,
\]
where $\alpha,\beta>-1/2$. Symmetry with respect to $\pi/4$ shows that without loss of generality we can assume $\beta\le\alpha$. The function $B$ satisfies the following properties
\begin{enumerate} 
\item $B\in\mathcal{C}^{4}\left(  {I}\right) $, where $I$ is any open interval containing $[0,\pi/2]$ 
 \item $B\left(  t\right)  >0$
for all $t\in I.$ 
\item $B$ even with respect to $0$ and $\pi/2$ (thus,
in particular, $B^{\prime}\left(  0\right)  =B^{\prime}\left(  \pi/2\right)
=0)$.
\end{enumerate} 

The above properties imply that $B$ can be extended to a $\pi$-periodic positive function in $\mathcal C^4(\mathbb R)$, even with respect to $0$ and to $\pi/2$. In other words, we can assume without loss of generality that $I=\mathbb R$.
  When $B(t)\equiv1,$ the operator $L$
corresponds to the standard Jacobi operator.

%

The Liouville transformation $u\left(  t\right)  =A\left(  t\right)
^{1/2}v\left(  t\right)  $ gives the normal form

\[
\ell u:=-u^{\prime\prime}+\left(  \left(  \alpha^{2}-\frac{1}{4}\right)
\cot^{2}t+\left(  \beta^{2}-\frac{1}{4}\right)  \tan^{2}t-\chi\left(
t\right)  \right)  u
\]
where
\begin{align*}
\chi\left(  t\right)  &=\left(  \beta+\frac{1}{2}\right)  \frac{B^{\prime
}\left(  t\right)  }{B\left(  t\right)  }\tan t-\left(  \alpha+\frac{1}%
{2}\right)  \frac{B^{\prime}\left(  t\right)  }{B\left(  t\right)  }\cot
t+\frac{1}{4}\left(  \frac{B^{\prime}\left(  t\right)  }{B\left(  t\right)
}\right)  ^{2}-\frac{1}{2}\frac{B^{\prime\prime}\left(  t\right)  }{B\left(
t\right)  }\\
&+  2\alpha\beta+2\alpha+2\beta+\frac{3}{2}.
\end{align*}
Note that 
$\chi$ is a twice continuously differentiable function on $\mathbb R$, even with respect to $0$ and
$\pi/2$, and $\alpha\ge\beta>-1/2$, so that all the results of the previous section can be applied. 
In particular, Proposition \ref{boundary conditions} applies, so that we can
 consider the orthonormal basis $\left\{  v_{n}\left(  t\right)
\right\}  _{n=0}^{+\infty}$ on $L^{2}\left(  \left(  0,\pi/2\right)  ,A\left(
t\right)  dt\right)  $ defined by%
\[
v_{n}\left(  t\right)  :=A\left(  t\right)  ^{-1/2}u_{n}\left(  t\right)  .
\]

It may be worth emphasizing that $\{v_n\}_{n=0}^{+\infty}$ are the eigenfunctions of the self-adjoint restriction
of the operator $L$ to the space
\[
\left\{  z\in\mathcal{D}_A:\left[  z,A^{-1/2}V_{\mu,\alpha}\right]_L \left(
0\right)  =\left[  z,A^{-1/2}W_{\widetilde{\mu},\beta}\right]_L\left(
\pi/2\right)  =0\right\}  
\]
if $-1/2<\beta\leq\alpha<1$,
\[
\left\{  z\in\mathcal{D}_A:\left[  z,A^{-1/2}W_{\mu,\beta}\right]_L \left(
\pi/2\right)  =0\right\}  \]
if $-1/2<\beta<1\leq\alpha$, and 
$\mathcal{D}_A $
if $1\leq\beta\leq\alpha$,
where %
\begin{align*}
\mathcal D_A&=\{z,Az'\in AC_{\mathrm{loc}},z,Lz\in L^{2}\left(  \left(  0,\pi/2\right)  ,A\left(
t\right)  dt\right) \}\\
\left[  z,u\right]_L\left(  0\right)   &  =\lim_{t\rightarrow0+}\left(
z\left(  t\right)  \overline{A(t)u^{\prime}\left(  t\right)  }-A(t)z^{\prime}\left(
t\right)  \overline{u\left(  t\right)  }\right) \\
\left[  z,u\right]_L \left(  \pi/2\right)   &  =\lim_{t\rightarrow
\pi/2-}\left(  z\left(  t\right)  \overline{A(t)u^{\prime}\left(  t\right)
}-A(t)z^{\prime}\left(  t\right)  \overline{u\left(  t\right)  }\right).
\end{align*}
This follows easily from Lemma 3.2 and Corollary 3.1 in \cite{NZ} and Proposition \ref{boundary conditions}.

The Fourier coefficients of a function $f\in L^{2}\left(  \left(
0,\pi/2\right)  ,A\left(  t\right)  dt\right)  $ are denoted by%
\[
\mathcal{F}f\left(  n\right)  :=\int_{0}^{\pi/2}f\left(  t\right)
v_{n}\left(  t\right)  A\left(  t\right)  dt.
\]
It is immediate to observe that $f\in L^{2}\left(  \left(  0,\pi/2\right)
,A\left(  t\right)  dt\right)  $ if and only if $A^{1/2}f\in L^{2}\left(
\left(  0,\pi/2\right)  ,dt\right)  $, and in this case%
\[
\mathcal{F}f\left(  n\right)  =\widehat{\left(  A^{1/2}f\right)  }\left(
n\right)  .
\]
We want to prove an equiconvergence result for operators of the form%
\[
T_{N}^{A}f\left(  t\right)  :=\sum_{n=0}^{+\infty}r_{n,N}\mathcal{F}f\left(
n\right)  v_{n}\left(  t\right)  ,
\]
where $r_{n,N}$ satisfy properties (S1) and (S2).

\begin{theorem} \label{equiconv}
Assume the sequences $\left\{  r_{n,N}\right\}  _{n=0}^{+\infty}$ satisfy (S1)
and (S2) and there exists a number $R$ such that for all $n\geq0$,%
\[
\lim_{N\rightarrow+\infty}r_{n,N}=R.
\]
Then for any $f\in L^{1}\left(  \left(  0,\pi/2\right)  ,A^{1/2}\left(
t\right)  dt\right)  $ and for any $t\in\left(  0,\pi/2\right)  $ one has%
\[
\lim_{N\rightarrow+\infty} T_{N}^{A}f\left(  t\right)
-A^{-1/2}\left(  t\right)  D_{N}\left(  A^{1/2}f\right)  \left(  t\right)
 =0,
\]
and the convergence is uniform on all sets $\Gamma\subset\left(
0,\pi/2\right)  $ with positive distance from $0$ and from $\pi/2$.
\end{theorem}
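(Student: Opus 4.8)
The plan is to reduce the statement to the normal-form equiconvergence already established in Theorem \ref{equiconv_one}, exploiting the Liouville transformation that ties $\{v_n\}$ to $\{u_n\}$. The starting point is the factorization identity
\[
T_N^A f(x) = A^{-1/2}(x)\, T_N\!\left(A^{1/2}f\right)(x),
\]
which I would verify straight from the definitions. Since $v_n(t)A(t) = A^{1/2}(t)u_n(t)$, one has $\mathcal{F}f(n) = \int_0^{\pi/2} f(t)A^{1/2}(t)u_n(t)\,dt = \widehat{(A^{1/2}f)}(n)$; this identity, stated in the excerpt for $L^2$, holds equally for the class at hand because $u_n$ is bounded by (P1), so the integral converges absolutely whenever $A^{1/2}f\in L^1$. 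Substituting $v_n(x) = A^{-1/2}(x)u_n(x)$ into the series for $T_N^A f$ then pulls $A^{-1/2}(x)$ out of the sum, giving
\[
T_N^A f(x) - A^{-1/2}(x) D_N\!\left(A^{1/2}f\right)(x) = A^{-1/2}(x)\Bigl[T_N g(x) - D_N g(x)\Bigr],
\]
where I set $g := A^{1/2}f$.

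The second step is to observe that the hypothesis $f\in L^1((0,\pi/2),A^{1/2}(t)\,dt)$ is precisely the statement that $g = A^{1/2}f$ belongs to $L^1((0,\pi/2),dt)$, which legitimizes the reduction: the coefficients $\widehat g(n)$ are finite and $T_Ng$, $D_Ng$ are well-defined $L^1$-objects exactly as in the proof of Theorem \ref{equiconv_L1}. Because the sequences $\{r_{n,N}\}$ satisfy (S1), (S2) and $r_{n,N}\to R$, and because $\{u_n\}$ satisfies (P1)--(P5) by Theorem \ref{fits}, Theorem \ref{equiconv_one} applies to $g$ and yields $T_N g(x) - D_N g(x)\to 0$ for every $x\in(0,\pi/2)$, with uniform convergence on any $\Gamma$ bounded away from the endpoints.

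It then remains only to absorb the prefactor $A^{-1/2}(x)$. For the pointwise statement this is immediate, since for fixed $x\in(0,\pi/2)$ the quantity $A^{-1/2}(x)$ is a finite constant. For the uniform statement I would note that $A(x)=(\sin x)^{2\alpha+1}(\cos x)^{2\beta+1}B(x)$ is bounded below by a positive constant on $\Gamma$, because $\sin x$ and $\cos x$ are bounded away from $0$ there while $B$ is continuous and strictly positive; hence $A^{-1/2}$ is bounded on $\Gamma$, and multiplying the uniformly small bracket by a bounded factor preserves uniform convergence.

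I expect no genuine obstacle in this last step: all the analytic difficulty has already been discharged into the verification of (P1)--(P5) and the abstract machinery. The only points that require care are the clean derivation of the factorization identity and the elementary remark that $A^{-1/2}$ is bounded exactly on sets with positive distance from $0$ and $\pi/2$, which is the structural reason the uniform conclusion must be restricted to such $\Gamma$.
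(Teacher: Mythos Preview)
Your proposal is correct and follows exactly the paper's own approach: reduce to Theorem \ref{equiconv_one} via the factorization identity $T_N^A f(t) = A^{-1/2}(t)\, T_N(A^{1/2}f)(t)$, then absorb the prefactor. The paper's proof is in fact a one-line citation of this identity together with Theorem \ref{equiconv_one}; your version simply spells out the routine verifications (well-definedness of coefficients, boundedness of $A^{-1/2}$ on $\Gamma$) that the paper leaves implicit.
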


\begin{proof}
This follows immediately from Theorem \ref{equiconv_one} and the identiity
\[
T^A_Nf(t)=A^{-1/2}(t)T_N\left(A^{1/2}f\right)(t).
\]
\end{proof}

In particular, the above theorem applies to a large class of $L^{p}\ $spaces.
Indeed, by Holder's inequality we get

\begin{proposition}\label{constraint}
Let $p>\left(  4\alpha+4\right)  /\left(  2\alpha+3\right)  $. Then
\[
L^{p}\left(  \left(  0,\pi/2\right)  ,A\left(  t\right)  dt\right)  \subset
L^{1+\varepsilon}\left(  \left(  0,\pi/2\right)  ,A^{1/2}\left(  t\right)  dt\right),
\]
for some $\varepsilon>0$.
\end{proposition}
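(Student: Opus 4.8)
The plan is to reduce the claim to the integrability of a single power of the weight near the two endpoints and then to invoke H\"older's inequality with a suitable conjugate pair. First I would record the behaviour of $A$ at the endpoints: since $B$ is continuous and strictly positive on the compact interval $[0,\pi/2]$, it is bounded above and below by positive constants, so $A(t)$ is comparable to $(\sin t)^{2\alpha+1}(\cos t)^{2\beta+1}$; in particular $A(t)\asymp t^{2\alpha+1}$ as $t\to0^+$ and $A(t)\asymp(\pi/2-t)^{2\beta+1}$ as $t\to(\pi/2)^-$. Consequently the integrability of any power $A^{\gamma}$ on $(0,\pi/2)$ is governed entirely by the two quantities $\gamma(2\alpha+1)$ and $\gamma(2\beta+1)$, each of which must exceed $-1$.

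Next, observing that the hypothesis forces $p>(4\alpha+4)/(2\alpha+3)\ge1$, I would fix $s=1+\varepsilon$ with $\varepsilon>0$ small enough that $s<p$, split $A^{1/2}=A^{s/p}\cdot A^{1/2-s/p}$, and apply H\"older's inequality with the conjugate exponents $p/s$ and $p/(p-s)$:
\[
\int_{0}^{\pi/2}|f|^{s}A^{1/2}\,dt\le\left(\int_{0}^{\pi/2}|f|^{p}A\,dt\right)^{s/p}\left(\int_{0}^{\pi/2}A^{\gamma}\,dt\right)^{(p-s)/p},\qquad \gamma=\frac{p/2-s}{p-s}.
\]
The first factor on the right is finite because $f\in L^{p}(A\,dt)$, so the entire proof reduces to showing that $\int_{0}^{\pi/2}A^{\gamma}\,dt<\infty$ for $\varepsilon$ small.

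For this last step I would distinguish the sign of $\gamma$. If $\gamma\ge0$ the integral converges automatically, since $2\alpha+1$ and $2\beta+1$ are positive. If $\gamma<0$, then because $\alpha\ge\beta$ the binding constraint is the one at the origin, namely $\gamma(2\alpha+1)>-1$. The crucial computation is to test this at $s=1$, where $\gamma=(p/2-1)/(p-1)$: a short rearrangement shows that $\gamma(2\alpha+1)>-1$ is equivalent to $p(\alpha+\tfrac32)>2(\alpha+1)$, that is, to $p>(4\alpha+4)/(2\alpha+3)$, which is exactly our hypothesis. Since $\gamma$ depends continuously on $s$ and the inequality is strict, it persists for $s=1+\varepsilon$ with $\varepsilon$ sufficiently small, yielding $\int_{0}^{\pi/2}A^{\gamma}\,dt<\infty$ and hence $f\in L^{1+\varepsilon}((0,\pi/2),A^{1/2}\,dt)$.

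I expect the only genuine subtlety to be the bookkeeping that pins the threshold $(4\alpha+4)/(2\alpha+3)$ precisely to the strict endpoint inequality at $s=1$, together with the observation that, when $\gamma<0$, it is the endpoint at $0$ associated with the larger parameter $\alpha$ that is binding; once this is identified, everything else is a routine continuity-and-H\"older argument.
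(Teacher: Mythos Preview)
Your proof is correct and follows exactly the approach the paper indicates: the paper's own proof is the single sentence ``By Holder's inequality we get,'' and your argument is a complete and accurate fleshing-out of that H\"older computation, including the identification of the critical exponent $(4\alpha+4)/(2\alpha+3)$ from the integrability condition $\gamma(2\alpha+1)>-1$ at the origin.
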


\subsection{Pointwise convergence of partial sums}

In this section we will consider the case $r_{n,N}=1$ for $n\le N$ and $r_{n,N}=0$ for $n> N$. Thus, $T_N^A$ reduces to the partial sums operator
\[
T_{N}^{A}f\left(  t\right)  :=\sum_{n=0}^{N}\mathcal{F}f\left(
n\right)  v_{n}\left(  t\right).
\]
and similarly, 
\[
D_{N}f\left(  t\right)    =\frac{2}{\pi}\int_{0}^{\pi/2}f\left(
y\right)  dy+\frac{4}{\pi}\sum_{n=1}^{N}  \int_{0}^{\pi
/2}f\left(  y\right)  \cos\left(  2ny\right)  dy  \cos\left(
2nt\right).
\]
Our goal here is to apply the equiconvergence result in the previous section,
namely Theorem \ref{equiconv}, in order to transfer classical results about the pointwise convergence of the partial sums $D_{N}g\left(  t\right)$, to the partial sum operator $T_N^Af(t)$.

The first preliminary result is the following generalization of a well known result of C. Meaney for Jacobi polynomial expansions \cite{Meaney}.
\begin{theorem}\label{Meaney}
Let $p_0=(4\alpha+4)/(2\alpha+3)$. There exists a function 
\[f\in L^{p_0}((0,\pi/2),A(t)dt)\] 
such that $f(t)=0$ for all $t\in [\pi/4,\pi/2)$ and such that $T_N^Af(t)$ diverges a.e. on $(0,\pi/2)$. In particular, there is no $L^{p_0}$ localization.
\end{theorem}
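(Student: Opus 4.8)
The plan is to move the problem to the normal form and to exhibit the divergence as a genuine endpoint phenomenon at $t=0$. Using the identity $T_N^A f(t)=A^{-1/2}(t)\,T_N(A^{1/2}f)(t)$ recorded above and setting $h=A^{1/2}f$, divergence of $T_N^A f(t)$ at a point $t$ is equivalent to divergence of $T_N h(t)=\sum_{n=0}^N \widehat h(n)\,u_n(t)$; the requirement $f\in L^{p_0}((0,\pi/2),A\,dt)$ with $f\equiv0$ on $[\pi/4,\pi/2)$ becomes: $h$ is supported in $(0,\pi/4)$ and $\int_0^{\pi/4}|h|^{p_0}A^{1-p_0/2}\,dt<\infty$. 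The structural point is that at the critical index $p_0$ the inclusion of Proposition \ref{constraint} just fails, so one may, and in fact must, take $h\notin L^1((0,\pi/2),dt)$. Consequently the equiconvergence theorem (Theorem \ref{equiconv}), which presupposes $A^{1/2}f\in L^1$, is unavailable. This is precisely why localization can break: on $[\pi/4,\pi/2)$ the cosine partial sums $D_N h$ would tend to $0$ by Riemann localization if $h$ were integrable, so divergence of $T_N^A f$ there is possible only because $h$ fails to be integrable at the origin.

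The quantitative input is the endpoint size of the eigenfunctions. By Lemma \ref{U_expansion_Bessel}, $u_n(y)=\sqrt2\,(\sigma_n y)^{1/2}J_\alpha(\sigma_n y)+\dots$ near $0$, so $u_n$ carries a peak of height $\sim\sigma_n^{\alpha+1/2}$ concentrated at scale $y\sim\sigma_n^{-1}$, and $v_n=A^{-1/2}u_n$ peaks at the same order; a direct computation of the relevant $L^{p_0}(A\,dt)$ norms identifies $p_0=(4\alpha+4)/(2\alpha+3)$ as the index at which the partial–sum projections cease to be uniformly bounded. I would record as the key estimate that the operators $T_N^A$ have $L^{p_0}\to L^{p_0}$ norms (equivalently, kernel Lebesgue constants) unbounded in $N$: testing on functions $f_m$ whose profile $h_m=A^{1/2}f_m$ is a bump of unit $L^{p_0}$–mass concentrated near $0$ at scale $m^{-1}$ produces, via Lemma \ref{U_expansion_Bessel} for the coefficients and the oscillatory expansions of Lemmas \ref{U_expansion}, \ref{P2} together with the trigonometric sum bounds of Lemmas \ref{sineexpansion}–\ref{one_minus} for the evaluation, a partial sum $T_N^A f_m$ whose modulus is large on a subset of $(0,\pi/2)$ of measure bounded away from zero, with largeness growing without bound as $m\to\infty$.

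To pass from operator unboundedness to almost everywhere divergence I would argue by contradiction, following Meaney's original scheme. Restrict to the closed subspace of $L^{p_0}(A\,dt)$ of functions vanishing on $[\pi/4,\pi/2)$ and suppose that $T_N^A f(t)$ converged for a.e.\ $t$ for every such $f$. Since the $T_N^A$ are linear and continuous in measure on a finite measure space, Nikishin's theorem provides a set $E'\subset(0,\pi/2)$ of positive measure on which the maximal operator $T_*^A f=\sup_N|T_N^A f|$ obeys a weak type $(p_0,p_0)$ inequality. The functions $f_m$ of the previous step violate this inequality as $m\to\infty$, a contradiction. Hence a.e.\ convergence fails on this subspace, and a condensation–of–singularities / gliding–hump construction then produces a single $f\in L^{p_0}(A\,dt)$, supported in $[0,\pi/4)$, with $\limsup_N|T_N^A f(t)|=\infty$ for almost every $t$, in particular for a.e.\ $t\in[\pi/4,\pi/2)$, which is the asserted failure of localization.

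The hard part will be the final upgrade from divergence on a set of positive measure to divergence almost everywhere on all of $(0,\pi/2)$ while keeping the divergent function supported in $[0,\pi/4)$. Unlike the translation–invariant setting there is no group symmetry available to transport a single positive–measure divergence set across the interval, so the covering of almost every point must be built into the construction: one superposes the bumps $f_m$ at a carefully chosen lacunary sequence of scales and amplitudes whose $L^{p_0}$–masses remain summable, yet whose induced partial–sum resonances equidistribute, through the phases $\{\sigma_n t\}$, over almost every $t$. Balancing the amplitude budget inside $L^{p_0}$—exactly where the embedding into $L^1(A^{1/2}\,dt)$ barely fails—against the need for the resonances to accumulate at a.e.\ point is the delicate mechanism that makes $p_0$ sharp, and is where most of the technical effort will be spent.
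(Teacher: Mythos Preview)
Your route is genuinely different from the paper's, and the difference matters precisely at the step you flag as hard. You propose to show operator unboundedness via concentrated bumps, invoke Nikishin to rule out a.e.\ convergence, and then upgrade from divergence on a set of positive measure to divergence a.e.\ by an ad hoc gliding--hump construction whose resonances you hope will equidistribute. The paper avoids this last step entirely by a Cantor--Lebesgue argument. It first observes, from the interior expansion (P2), that
\[
\int_0^{\pi/2}|v_n(t)|^{p_0'}A(t)\,dt \;\ge\; \int_{1/n}^{\pi/4}|u_n(t)|^{p_0'}A(t)^{1-p_0'/2}\,dt \;\ge\; c\log n,
\]
so the functionals $f\mapsto \int_0^{\pi/4} f\,v_n\,A\,dt$ on $L^{p_0}([0,\pi/4],A\,dt)$ have norms $\gtrsim(\log n)^{1/p_0'}$. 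By the uniform boundedness principle there exists $f\in L^{p_0}([0,\pi/4],A\,dt)$ with $\limsup_n |\mathcal F f(n)|/\bigl(\varepsilon_n(\log n)^{1/p_0'}\bigr)=\infty$ for any $\varepsilon_n\to0$. Now the key: if $T_N^A f(t)$ converged on \emph{any} set $E$ of positive measure, then $\mathcal F f(n)\,v_n(t)\to 0$ on $E$; since $v_n(t)=A^{-1/2}(t)\,\tfrac{2}{\pi}\cos((2n+\nu)t-\lambda)+O(1/n)$ for $t$ in compact subsets of $(0,\pi/2)$, a Cantor--Lebesgue type lemma (Meaney's Lemma~6) forces $\mathcal F f(n)\to 0$, contradicting the choice $\varepsilon_n=(\log n)^{-1/(2p_0')}$. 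Thus the partial sums can converge on no set of positive measure, which is exactly a.e.\ divergence on $(0,\pi/2)$---no gliding hump, no equidistribution, no covering argument needed.

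Your approach is not wrong in spirit, but as written it has a genuine gap: the passage from ``no weak type on any set of positive measure'' to a \emph{single} $f$ supported in $[0,\pi/4)$ whose partial sums diverge at \emph{almost every} point of the whole interval is left to a vague lacunary superposition. Even in Meaney's translation--noninvariant Jacobi setting this is not how the a.e.\ divergence is obtained; the Cantor--Lebesgue mechanism is what makes the argument short and complete. If you want to keep your framework, you should replace the final paragraph by the Cantor--Lebesgue step: once you have $f$ with $|\mathcal F f(n)|$ large along a subsequence, convergence on a positive--measure set is already impossible, and you are done.
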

\begin{proof} The proof goes exactly as in \cite{Meaney}. We sketch it here for sake of completeness. Observe first that 
\[
\int_0^{\pi/2}|v_n(t)|^{p'_0}A(t)dt
\ge \int_{1/n}^{\pi/4}|u_n(t)|^{p'_0}A(t)^{1-p_0/2}dt \ge c\log n
\]
by the expansion for $u_n$ given by (P2). 

Thus, the linear functional on $L^{p_0}([0,\pi/4], A(t)dt)$
\[
f \rightarrow \int_0^{\pi/4} f(t) v_n(t) A(t) dt
\]
has norm greater than $c(\log n)^{1/p'_0}$. Also, for any sequence $\varepsilon_n\to 0$, the linear functional on $L^{p_0}([0,\pi/4], A(t)dt)$
\[
f \rightarrow \int_0^{\pi/4} f(t) \frac{v_n(t)}{\varepsilon_n(\log n)^{1/p'_0}} A(t) dt
\]
has norm greater than $c\varepsilon_n^{-1}$.
By the uniform boundedness principle, there exists a function $f\in L^{p_0}([0,\pi/4], A(t)dt)$ such that 
\[
\limsup\limits_{n\to+\infty}\left|\frac{\mathcal Ff(n)}{\varepsilon_n(\log n)^{1/p'_0}}\right|=+\infty.
\]

If $T_N^Af(t)=\sum_{n=0}^N\mathcal Ff(n)v_n(t)$ converges on a set of positive measure in 
$[0,\pi/2]$, then $\mathcal Ff(n)v_n(t)\mapsto 0$ on this set, and in a subset of it with positive measure, by Egoroff's theorem, the convergence is uniform. Now observe that again by (P2), for $\varepsilon\le t\le \pi/2-\varepsilon$
\[
v_n(t)=A(t)^{-1/2}\frac 2\pi\cos((2n+\nu )t-\lambda)+O(1/n)
\]
so that by Lemma 6 in \cite{Meaney}, $\mathcal Ff(n)\mapsto 0$.
Setting $\varepsilon_n=(\log n)^{-1/(2p'_0)}$, gives the contradiction.
\end{proof}

It follows that a necessary condition for the a.e. convergence of $T_N^Af(t)$ for all $f\in L^{p}((0,\pi/2),A(t)dt)$, is that $p>p_0$. Notice that by Proposition \ref{constraint}, this is precisely the range of applicability of the equiconvergence result in Theorem \ref{equiconv}, which we may therefore use to show that in fact $p>p_0$ is also a sufficient condition.

\begin{theorem}\label{a.e.}
Let $p>p_0$ and let $f\in L^{p}((0,\pi/2),A(t)dt)$. Then $T_N^Af(t)$ converges a.e. to $f(t)$ as $N\to+\infty$.
\end{theorem}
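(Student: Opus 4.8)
The plan is to reduce the a.e.\ convergence of $T_N^A f$ to the classical Carleson--Hunt theorem for cosine series, using the equiconvergence of Theorem \ref{equiconv}. First I would verify that the partial-sum multiplier, $r_{n,N}=1$ for $n\le N$ and $r_{n,N}=0$ otherwise, satisfies the hypotheses of Theorem \ref{equiconv}: clearly $|r_{n,N}|\le1$, while $\sum_n|\Delta r_{n,N}|=1$ for every $N$, since the only nonzero difference occurs at $n=N$; moreover $r_{n,N}\to1$ as $N\to+\infty$ for each fixed $n$, so the theorem applies with $R=1$.

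Next, since $p>p_0=(4\alpha+4)/(2\alpha+3)$, Proposition \ref{constraint} gives $f\in L^{1+\varepsilon}((0,\pi/2),A^{1/2}(t)\,dt)$ for some $\varepsilon>0$. Because $A(t)=(\sin t)^{2\alpha+1}(\cos t)^{2\beta+1}B(t)$ is bounded above on $(0,\pi/2)$ (both $2\alpha+1$ and $2\beta+1$ are positive and $B$ is continuous), one has $A^{(1+\varepsilon)/2}\le C\,A^{1/2}$, whence the function $g:=A^{1/2}f$ satisfies
\[
\int_0^{\pi/2}|g|^{1+\varepsilon}\,dt=\int_0^{\pi/2}|f|^{1+\varepsilon}A^{(1+\varepsilon)/2}\,dt\le C\int_0^{\pi/2}|f|^{1+\varepsilon}A^{1/2}\,dt<+\infty.
\]
Thus $g\in L^{1+\varepsilon}((0,\pi/2),dt)$, and in particular $g\in L^1((0,\pi/2),dt)$, so the equiconvergence of Theorem \ref{equiconv} (with $R=1$) yields, for every $t\in(0,\pi/2)$,
\[
\lim_{N\to+\infty}\Bigl(T_N^Af(t)-A^{-1/2}(t)D_N g(t)\Bigr)=0.
\]

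The final step is to identify the limit of the cosine part. Since $g\in L^{1+\varepsilon}((0,\pi/2),dt)$ with $1+\varepsilon>1$, the classical Carleson--Hunt theorem gives $D_N g(t)\to g(t)$ for a.e.\ $t\in(0,\pi/2)$. As $A^{-1/2}(t)$ is finite and positive at each interior point and $A^{-1/2}(t)g(t)=f(t)$, I obtain $A^{-1/2}(t)D_N g(t)\to f(t)$ a.e.; combining this with the equiconvergence display, which holds at every interior point, gives $T_N^Af(t)\to f(t)$ for a.e.\ $t$, as claimed.

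I do not expect a genuine obstacle here, since all the hard analysis is already encoded in Theorem \ref{equiconv} and Proposition \ref{constraint}, and the proof is essentially an assembly. The one point deserving care is the boundedness of $A$, which is exactly what lets the $L^{1+\varepsilon}(A^{1/2}\,dt)$ membership of $f$ be upgraded to genuine $L^{1+\varepsilon}(dt)$ membership of $g=A^{1/2}f$, and hence to the applicability of Carleson--Hunt. The sharpness of the threshold $p_0$, with the endpoint $p=p_0$ excluded by Theorem \ref{Meaney}, is already built into the range of Proposition \ref{constraint}.
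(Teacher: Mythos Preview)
Your proof is correct and follows essentially the same route as the paper's: use Proposition \ref{constraint} to get $f\in L^{1+\varepsilon}(A^{1/2}\,dt)$, pass to $g=A^{1/2}f\in L^{1+\varepsilon}(dt)$, invoke Theorem \ref{equiconv} for equiconvergence, and finish with Carleson--Hunt. The only differences are cosmetic: you spell out the verification of (S1), (S2) and the limit $r_{n,N}\to1$, and you justify the passage from $L^{1+\varepsilon}(A^{1/2}\,dt)$ to $L^{1+\varepsilon}(dt)$ via the boundedness of $A$, both of which the paper leaves implicit.
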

\begin{proof}
Since $p>p_0$, by Proposition \ref{constraint}, $f\in L^{1+\varepsilon}((0,\pi/2),A^{1/2}(t)dt)$, so that by Theorem \ref{equiconv}, $T_N^Af(t)$ is equiconvergent with 
$A^{-1/2}(t)D_N(A^{1/2}f)(t)$. But again since $f\in L^{1+\varepsilon}((0,\pi/2),A^{1/2}(t)dt)$, then $A^{1/2}f\in L^{1+\varepsilon}((0,\pi/2),dt)$ so that by the Carleson-Hunt theorem $D_N(A^{1/2}f)(t)$ converges a.e. to $A^{1/2}(t)f(t)$. Thus, $A^{-1/2}(t)D_N(A^{1/2}f)(t)$ converges a.e. to $f(t)$, and so does $T_N^Af(t)$.
\end{proof}

We can also transfer to this context the classic result of Kahane-Katznelson on the divergence of partial sums of Fourier series of continuous functions.

\begin{theorem} \label{KK}
For any $E\subset (0,\pi/2)$ such that $|E|=0$ there exists a continuous function $f\in L^1((0,\pi/2),A^{1/2}(t)dt)$ such that $T_N^Af(t)$ diverges for all $t\in E$.
\end{theorem}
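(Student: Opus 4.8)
The plan is to deduce Theorem~\ref{KK} from the \emph{classical} Kahane--Katznelson theorem for the cosine system by pulling the divergence back through the equiconvergence of Theorem~\ref{equiconv}. First I would invoke the classical result: since $|E|=0$, there is a function $g$, continuous on $[0,\pi/2]$, whose cosine partial sums $D_Ng(t)$ diverge at every point of $E$. I then set
\[
f:=A^{-1/2}g .
\]
Because $A^{-1/2}(t)=(\sin t)^{-\alpha-1/2}(\cos t)^{-\beta-1/2}B^{-1/2}(t)$ is continuous and strictly positive on $(0,\pi/2)$ (recall $B>0$ everywhere), $f$ is continuous on $(0,\pi/2)$; and since $A^{1/2}f=g$ is bounded,
\[
\int_0^{\pi/2}|f(t)|A^{1/2}(t)\,dt=\int_0^{\pi/2}|g(t)|\,dt<+\infty ,
\]
so that $f\in L^1((0,\pi/2),A^{1/2}(t)\,dt)$, which is exactly the hypothesis needed to apply the equiconvergence theorem.

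Next I would apply Theorem~\ref{equiconv} to the partial-sum weights $r_{n,N}=1$ for $n\le N$ and $r_{n,N}=0$ otherwise, which satisfy (S1) and (S2) and tend to $R=1$. This yields, for every $t\in(0,\pi/2)$,
\[
\lim_{N\to+\infty}\Bigl(T_N^Af(t)-A^{-1/2}(t)D_N\bigl(A^{1/2}f\bigr)(t)\Bigr)=0 .
\]
Since $A^{1/2}f=g$, the subtracted term is $A^{-1/2}(t)D_Ng(t)$. Fixing $t\in E$, the factor $A^{-1/2}(t)$ is a finite nonzero constant, so the sequence $A^{-1/2}(t)D_Ng(t)$ diverges in $N$ precisely when $D_Ng(t)$ does, i.e.\ for all $t\in E$. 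If $T_N^Af(t)$ converged at some $t\in E$, then by the displayed equiconvergence $A^{-1/2}(t)D_Ng(t)$ would converge too, a contradiction; hence $T_N^Af(t)$ diverges at every $t\in E$, which is the assertion of the theorem.

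The only genuinely delicate point is the meaning of \emph{continuity} at the endpoints. The function $f=A^{-1/2}g$ is continuous on the open interval $(0,\pi/2)$, but it will in general blow up at $0$ and $\pi/2$, since $A^{-1/2}(t)$ grows like $t^{-\alpha-1/2}$ near $0$ and like $(\pi/2-t)^{-\beta-1/2}$ near $\pi/2$; this is harmless for membership in $L^1(A^{1/2}\,dt)$ and for divergence on $E\subset(0,\pi/2)$, and it is the natural notion of continuity for this weighted setting. The main obstacle, should one insist on continuity up to the closed interval, is arranging the Kahane--Katznelson function $g$ to vanish at the endpoints at the rates $t^{\alpha+1/2}$ and $(\pi/2-t)^{\beta+1/2}$ while keeping $D_Ng$ divergent on $E$; the freedom in the Kahane--Katznelson construction (one may prescribe the behaviour of $g$ away from $E$ and use a smooth cutoff when $E$ stays off the endpoints, treating boundary accumulation of $E$ by a localization argument) makes this available, but it is the step requiring the most care.
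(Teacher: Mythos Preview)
Your proof is correct and follows essentially the same route as the paper: invoke the classical Kahane--Katznelson theorem to obtain a continuous $g$ on $[0,\pi/2]$ with $D_Ng$ diverging on $E$, set $f=A^{-1/2}g$, and conclude via Theorem~\ref{equiconv}. The final paragraph on endpoint continuity is unnecessary, since the statement only asks for continuity on the open interval $(0,\pi/2)$, exactly as the paper intends.
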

\begin{proof}
By the Kahane-Katznelson theorem \cite[page 67]{K}, there exists a function $F$ continuous on $[0,\pi/2]$ such that $D_NF(t)$ diverges for all $t\in E$. Thus the function $f(t):=A^{-1/2}(t)F(t)$ is continuous in $(0,\pi/2)$ and belongs to $L^1((0,\pi/2),A^{1/2}(t)dt)$. Then, by Theorem \ref{equiconv}, $T_N^Af(t)$ is equiconvergent with $A^{-1/2}(t)D_N(F)(t)$, and therefore diverges in $E$.
\end{proof}

We now want to discuss briefly the Hausdorff dimension of the sets of divergence of functions with certain $L^p$ regularity. We will focus here on the case $p=2$, leaving the discussion of the more delicate case $p\neq 2$ for future studies.

Following Stein's book \cite{Stein_sing}, we will first define the $\Lambda^{2,2}_\gamma((0,\pi/2), A(x)dx)$ spaces, i.e. certain spaces defined in terms of the $L^2$ modulus of continuity. Since we need to consider translations of functions, it is better to change our point of view a bit, and think of all our generic functions $f(x)$ as $\pi$-periodic and even functions on $\mathbb R$. Thus, the integral
\[
\int_0^{\pi/2}f(x+t) A(x)dx
\] 
will perfectly make sense, as well as the $L^2$ norm of a translated function
\[
\|f(x+t)\|_{L^{2}((0,\pi/2), A(x)dx)}=\left(\int_{0}^{\pi/2}|f(x+t)|^2A(x)dx\right)^{1/2}.
\]
\begin{definition}
For any $0<\gamma<1$ the spaces $\Lambda^{2,2}_\gamma((0,\pi/2), A(x)dx)$ consist of all functions $f$ in $L^{2}((0,\pi/2), A(x)dx)$ for which the norm
\[
\|f\|_{L^{2}((0,\pi/2), A(x)dx)}+\int_{-\pi/2}^{\pi/2}\frac{\|f(x+t)-f(x)\|^2_{L^{2}((0,\pi/2), A(x)dx)}}{|t|^{1+2\gamma}}dt
\]
is finite.
\end{definition}
\begin{theorem}\label{Hdim} Let $0<\gamma<1/2$. If $f\in \Lambda^{2,2}_\gamma((0,\pi/2), A(x)dx)$ then $T_N^Af$ diverges on a set with Hausdorff dimension less than or equal to $1-2\gamma$.
\end{theorem}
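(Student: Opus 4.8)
The plan is to push the question onto the cosine basis through the equiconvergence theorem and then use the classical link between convergence of Fourier series, Bessel capacity and Hausdorff dimension. Put $g:=A^{1/2}f$. From $f\in L^{2}((0,\pi/2),A\,dt)$ we get $g\in L^{2}((0,\pi/2),dt)\subset L^{1}((0,\pi/2),dt)$, so Theorem \ref{equiconv} applies and shows that $T_N^Af(t)$ and $A^{-1/2}(t)D_Ng(t)$ are equiconvergent for every $t\in(0,\pi/2)$. Since $A^{-1/2}(t)$ is finite and strictly positive on the open interval, the set on which $T_N^Af$ diverges coincides with the set $\mathcal E$ on which the cosine partial sums $D_Ng$ diverge; hence it suffices to prove $\dim_H\mathcal E\le 1-2\gamma$.

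Next I would localize, using that Hausdorff dimension is countably stable. Write $(0,\pi/2)=\bigcup_k I_k$ as an increasing union of compact subintervals, so that $\dim_H\mathcal E=\sup_k\dim_H(\mathcal E\cap I_k)$. Fix $k$ and pick $\varphi\in\mathcal C_c^\infty(0,\pi/2)$ with $\varphi\equiv1$ on a neighbourhood of $I_k$. On $\operatorname{supp}\varphi$ the weight $A$ is $\mathcal C^4$ and bounded between two positive constants, so there the weighted modulus of continuity defining $\Lambda^{2,2}_\gamma((0,\pi/2),A\,dt)$ is comparable to the unweighted one; it follows that $\tilde g:=\varphi A^{1/2}f$ belongs to $\Lambda^{2,2}_\gamma((0,\pi/2),dt)=H^\gamma$, since multiplication by the $\mathcal C^4$, compactly supported factor $\varphi A^{1/2}$ preserves $H^\gamma$ for $0<\gamma<1$. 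Because $g=\tilde g$ on a neighbourhood of $I_k$ and $g-\tilde g\in L^1$ vanishes there, the Riemann localization principle gives that $D_Ng$ and $D_N\tilde g$ are equiconvergent throughout $I_k$, so $\mathcal E\cap I_k$ is exactly the divergence set of $D_N\tilde g$ inside $I_k$.

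It then remains to bound the dimension of the divergence set of $D_N\tilde g$ for $\tilde g\in H^\gamma$ with $0<\gamma<1/2$. Here I would invoke the classical capacitary convergence theorem: the Fourier series of an $H^\gamma$ function converges outside a set of zero Bessel $(\gamma,2)$-capacity, and, by the Adams--Hedberg capacity--dimension inequality, every set of zero $(\gamma,2)$-capacity has Hausdorff dimension at most $1-2\gamma$. Feeding this back through the localization and taking the supremum over $k$ yields $\dim_H\mathcal E\le 1-2\gamma$, as claimed.

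The heart of the matter, and the main obstacle, is precisely this capacitary convergence statement; the equiconvergence reduction, the multiplier estimate and the localization are all routine. The sharp exponent $1-2\gamma$ really requires a capacitary weak-type bound for the Carleson maximal operator, of the form $\operatorname{Cap}_{\gamma,2}(\{\sup_N|D_N\tilde g|>\lambda\})\lesssim\lambda^{-2}\|\tilde g\|_{H^\gamma}^2$. A naive argument that controls the Littlewood--Paley pieces $P_j\tilde g$ (at frequency $2^j$) in $L^2(\mu)$ against the $s'$-energy of a Frostman measure $\mu$ supported on $\mathcal E\cap I_k$ only yields the weaker conclusion $\dim_H\le 2-4\gamma$, the loss coming from the $L^4$ Bernstein inequality, which contributes an unwanted factor $2^{j/2}$ at each scale. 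Thus I would either cite the sharp capacitary estimate directly or reprove it by combining the dyadic capacitary bound with a Carleson-type estimate for the intermediate partial sums.
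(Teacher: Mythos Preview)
Your overall route matches the paper's: reduce to the cosine basis via Theorem~\ref{equiconv}, localize with a smooth cutoff so that $\tilde g=\varphi A^{1/2}f$ lies in the unweighted space $\Lambda^{2,2}_\gamma([0,\pi/2],dx)$, use Riemann localization to identify the divergence sets of $D_Ng$ and $D_N\tilde g$ on the compact piece, and then apply a classical capacity result. The paper carries out the verification that $A^{1/2}\chi f\in\Lambda^{2,2}_\gamma([0,\pi/2],dx)$ by an explicit computation (splitting $\tilde g(x+t)-\tilde g(x)$ into the two obvious pieces and using that $A$ is bounded above and below on a neighbourhood of $\operatorname{supp}\chi$) rather than by the abstract multiplier statement you sketch, but the content is the same; your formulation ``on $\operatorname{supp}\varphi$ the weighted modulus of continuity is comparable to the unweighted one'' is a little loose, since the modulus integrates over all of $(0,\pi/2)$, but the intended argument is correct.

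Your last paragraph, however, reflects a misconception that makes the proof look harder than it is. The statement you need---that the Fourier series of a function with $\sum n^{2\gamma}|\widehat g(n)|^2<\infty$ converges outside a set of $(1-2\gamma)$-capacity zero---is classical; it is Theorem~11.3 in Vol.~II of Zygmund's \emph{Trigonometric Series}, which the paper simply cites. Its proof is elementary (Abel summation plus an energy estimate) and predates Carleson's theorem by more than a decade. No capacitary weak-type bound for the Carleson maximal operator is required, and there is no loss leading to $2-4\gamma$: the sharp exponent $1-2\gamma$ falls out directly from the classical result. So there is no obstacle here; just cite Zygmund and you are done.
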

\begin{proof}
Let us fix a small $\varepsilon>0$, and define $\chi$ as a smooth, even, $\pi$-periodic function which equals $1$ in $[\varepsilon/2,\pi/2-\varepsilon/2]$ and equals $0$ in $[-\varepsilon/3,\varepsilon/3]$ and in $[\pi/2-\varepsilon/3,\pi/2+\varepsilon/3]$.

By the classic  $L^1$ localization for Fourier series, applied to $g-g\chi $ (see e.g. \cite[Vol. I, Theorem 6.2, page 52]{Z}), for all $g\in L^1((0,\pi/2),dx)$,
 $D_N(g\chi)(x)$ is uniformly equiconvergent with $D_Ng(x)$ in $[\varepsilon,\pi/2-\varepsilon]$. 
 
Since $f\in L^2((0,\pi/2),A(x)dx)$ and $2>p_0$, then Proposition \ref{constraint} and Theorem \ref{equiconv} imply that  for all $x\in[\varepsilon,\pi/2-\varepsilon]$
$T_N^Af(x)$ is uniformly equiconvergent with $A^{-1/2}(x)D_N(A^{1/2}f)(x)$, and therefore with $A^{-1/2}(x)D_N(A^{1/2}\chi f)(x)$.

Let us now show that $A^{1/2}\chi f\in \Lambda^{2,2}_\gamma([0,\pi/2],dx)$, defined as the set of all (even, $\pi$-periodic) functions $F$ in $L^{2}([0,\pi/2], dx)$ for which the norm
\[
\|F\|_{L^{2}([0,\pi/2], dx)}+\int_{-\pi/2}^{\pi/2}\frac{\|F(x+t)-F(x)\|^2_{L^{2}([0,\pi/2],dx)}}{|t|^{1+2\gamma}}dt
\]
is finite. Indeed, observe first that since $f\in L^2((0,\pi/2),A(x)dx)$, then obviously
$A^{1/2}\chi f\in L^2([0,\pi/2],dx)$ so that all we have to show is the boundedness of the integral
\[
\int_{-\varepsilon/100}^{\varepsilon/100}\frac{\|A^{1/2}\chi f(x+t)-A^{1/2}\chi f(x)\|^2_{L^{2}([0,\pi/2],dx)}}{|t|^{1+2\gamma}}dt.
\]
Set $\widetilde\chi$ as a smooth, even, $\pi$-periodic function which equals $1$ in $[\varepsilon/10,\pi/2-\varepsilon/10]$ and equals $0$ in $[-\varepsilon/20,\varepsilon/20]$ and in $[\pi/2-\varepsilon/20,\pi/2+\varepsilon/20]$. 

For all $x$ in the intervals $[0,\varepsilon/10]$ or in
$[\pi/2-\varepsilon/10, \pi/2]$ and for all $|t|<\varepsilon/100$, we have
$
A^{1/2}\chi f(x+t)-A^{1/2}\chi f(x)=0.
$
Thus,
\begin{align*}
&\int_{-\varepsilon/100}^{\varepsilon/100}\frac{\|A^{1/2}\chi f(x+t)-A^{1/2}\chi f(x)\|^2_{L^{2}([0,\pi/2],dx)}}{|t|^{1+2\gamma}}dt\\
=&\int_{-\varepsilon/100}^{\varepsilon/100}\frac{\|A^{1/2}\chi f(x+t)-A^{1/2}\chi f(x)\|^2_{L^{2}([0,\pi/2],\widetilde \chi(x)dx)}}{|t|^{1+2\gamma}}dt\\
\le & \,\,2\int_{-\varepsilon/100}^{\varepsilon/100}\frac{\|(A^{1/2}(x+t)\chi(x+t)-A^{1/2}(x)\chi(x)) f(x+t)\|^2_{L^{2}([0,\pi/2],\widetilde \chi(x)dx)}}{|t|^{1+2\gamma}}dt\\
&+2\int_{-\varepsilon/100}^{\varepsilon/100}\frac{\|A^{1/2}(x)\chi(x) (f(x+t)-f(x))\|^2_{L^{2}([0,\pi/2],\widetilde \chi(x)dx)}}{|t|^{1+2\gamma}}dt\\
\le & \,\,2\int_{-\varepsilon/100}^{\varepsilon/100}\frac{\|(A^{1/2}\chi)'\|_\infty^2|t|^2\| f(x)\|^2_{L^{2}([0,\pi/2],\widetilde \chi(x-t)dx)}}{|t|^{1+2\gamma}}dt\\
&+2\int_{-\varepsilon/100}^{\varepsilon/100}\frac{\| f(x+t)-f(x)\|^2_{L^{2}((0,\pi/2),A(x)dx)}}{|t|^{1+2\gamma}}dt\\
\end{align*}
and this is bounded by the hypotheses on $f$ and the estimate
\[
\| f(x)\|^2_{L^{2}([0,\pi/2],\widetilde \chi(x-t)dx)}\le c\| f(x)\|^2_{L^{2}((0,\pi/2),A(x)dx)}
\]
uniformly in $|t|\le \varepsilon/100$. 

It can be easily proven that $\Lambda^{2,2}_\gamma([0,\pi/2],dx)$ coincides with the potential space $\mathcal L^2_\gamma([0,\pi/2],dx)$ consisting of all even $\pi$-periodic functions $F$ such that 
\[
\sum_{n=1}^{+\infty}\left|\int_0^{\pi/2}F(x)\cos(2nx)dx\right|^2n^{2\gamma}
\]
is bounded. Finally, by \cite[Vol. II, Theorem 11.3, page 195]{Z}, it then follows that 
if $\gamma<1/2$ then $D_N(A^{1/2}\chi f)$ diverges on a set with $1-2\gamma$ outer capacity equal to $0$. It then follows that for all $\varepsilon>0$, the part of the divergence set of $T_N^Af$ contained in $[\varepsilon,\pi/2-\varepsilon]$ has $1-2\gamma$ outer capacity equal to $0$. Finally, the outer capacity of the divergence set of $T_N^Af$ in $(0,\pi/2)$ has $1-2\gamma$ outer capacity equal to $0$, and therefore Hausdorff dimension smaller than or equal to $1-2\gamma$.
\end{proof}

Finally, we will show that a slightly higher regularity than required in Theorem \ref{Hdim} gives pointwise convergence (except perhaps in $0$ and $\pi/2$).

\begin{theorem}\label{yes}
Let $1/2<\gamma<1$. If $f\in \Lambda^{2,2}_\gamma((0,\pi/2), A(x)dx)$ then $T_N^Af(x)$ converges for all $x\in(0,\pi/2)$, and the convergence is uniform away from $0$ and $\pi/2$.
\end{theorem}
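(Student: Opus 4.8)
The plan is to recycle the localization-plus-equiconvergence machinery already set up in the proof of Theorem \ref{Hdim}, and then to replace the capacity estimate used there (valid for $\gamma<1/2$) by the elementary observation that membership in $\mathcal L^2_\gamma$ with $\gamma>1/2$ forces the classical cosine series to converge absolutely and uniformly.

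First I would fix $\varepsilon>0$ and reuse verbatim the cutoff function $\chi$ from the proof of Theorem \ref{Hdim}. Since $f\in\Lambda^{2,2}_\gamma((0,\pi/2),A(x)dx)\subset L^2((0,\pi/2),A(x)dx)$ and $2>p_0$, Proposition \ref{constraint} gives $f\in L^{1+\delta}((0,\pi/2),A^{1/2}(x)dx)$ for some $\delta>0$, so Theorem \ref{equiconv} applies: $T_N^Af(x)$ is uniformly equiconvergent on $[\varepsilon,\pi/2-\varepsilon]$ with $A^{-1/2}(x)D_N(A^{1/2}f)(x)$. By the classical $L^1$ localization for Fourier series (\cite[Vol. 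I, Theorem 6.2, page 52]{Z}) this is in turn uniformly equiconvergent on $[\varepsilon,\pi/2-\varepsilon]$ with $A^{-1/2}(x)D_N(A^{1/2}\chi f)(x)$. Exactly as in Theorem \ref{Hdim}, the function $F:=A^{1/2}\chi f$ belongs to $\Lambda^{2,2}_\gamma([0,\pi/2],dx)=\mathcal L^2_\gamma([0,\pi/2],dx)$, so that, setting $a_n:=\int_0^{\pi/2}F(x)\cos(2nx)dx$, one has $\sum_{n\ge1}|a_n|^2 n^{2\gamma}<+\infty$.

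Now I would use $\gamma>1/2$, i.e. $2\gamma>1$. By the Cauchy--Schwarz inequality,
\[
\sum_{n\ge1}|a_n|\le\left(\sum_{n\ge1}|a_n|^2 n^{2\gamma}\right)^{1/2}\left(\sum_{n\ge1}n^{-2\gamma}\right)^{1/2}<+\infty,
\]
so the cosine series of $F$ converges absolutely, hence uniformly on $[0,\pi/2]$ by the Weierstrass $M$-test; in particular $D_N(A^{1/2}\chi f)(x)$ converges uniformly on $[0,\pi/2]$. Since $A^{-1/2}$ is bounded and continuous on $[\varepsilon,\pi/2-\varepsilon]$, the quantity $A^{-1/2}(x)D_N(A^{1/2}\chi f)(x)$ converges uniformly there, and chaining the two equiconvergences yields uniform convergence of $T_N^Af(x)$ on $[\varepsilon,\pi/2-\varepsilon]$. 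As $\varepsilon>0$ is arbitrary, this gives convergence at every $x\in(0,\pi/2)$ and uniform convergence on any $\Gamma$ with positive distance from $0$ and $\pi/2$. The only input beyond Theorem \ref{Hdim} is the passage from $\gamma>1/2$ to absolute convergence of the Fourier series; this is the crux of the argument, but it is the soft classical estimate above rather than a delicate computation, so no genuinely new technical obstacle arises.
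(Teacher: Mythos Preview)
Your proof is correct and follows the same route as the paper's: reuse the localization/equiconvergence setup from Theorem \ref{Hdim} to reduce to showing uniform convergence of $D_N(A^{1/2}\chi f)$, then exploit $\gamma>1/2$. The only cosmetic difference is that the paper invokes Bernstein's theorem \cite[Vol.~I, Theorem 3.1, page 240]{Z} for the last step, while you write out the underlying Cauchy--Schwarz estimate explicitly; these are the same argument.
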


\begin{proof}
Letting $\varepsilon>0$ and $\chi$ be as in the proof of the previous theorem, then $T_N^Af(x)$ is uniformly equiconvergent with $A^{-1/2}(x)D_N(A^{1/2}\chi f)(x)$ in $[\varepsilon,\pi/2-\varepsilon]$. Also, as before, $A^{1/2}\chi f\in \Lambda^{2,2}_\gamma([0,\pi/2],dx)$. By Bernstein's theorem \cite[Vol. I, Theorem 3.1, page 240, and the remark that follows]{Z}, if $\gamma>1/2$ then $D_N(A^{1/2}\chi f)(x)$ converges absolutely and uniformly in $(0,\pi/2)$. Thus, $T_N^Af(x)$ converges uniformly in $[\varepsilon,\pi/2-\varepsilon]$. 
\end{proof}
\section*{Acknowledgements}
 The second author is very grateful for the kind hospitality provided during her visit to University of Bergamo.

\end{document}